\newcommand{\A}{\mathbb{A}}
\newcommand{\EE}{\mathbb{E}}
\newcommand{\NN}{\mathbb{N}}
\newcommand{\ZZ}{\mathbb{Z}}
\newcommand{\reg}{\mathrm{reg}}
\newcommand{\cC}{\mathcal{C}}
\newcommand{\cN}{\mathcal{N}}
\newcommand{\cO}{\mathcal{O}}
\newcommand{\cU}{\mathcal{U}}
\newcommand{\fa}{\mathfrak{a}}
\newcommand{\fb}{\mathfrak{b}}
\newcommand{\fC}{\mathfrak{C}}
\newcommand{\fg}{\mathfrak{g}}
\newcommand{\fn}{\mathfrak{n}}
\newcommand{\fR}{\mathfrak{R}}
\newcommand{\fsl}{\mathfrak{sl}}
\newcommand{\ft}{\mathfrak{t}}
\newcommand{\fu}{\mathfrak{u}}
\newcommand{\fv}{\mathfrak{v}}
\newcommand{\fw}{\mathfrak{w}}
\newcommand{\fx}{\mathfrak{x}}
\newcommand{\fy}{\mathfrak{y}}
\newcommand{\msd}{\mathsf{d}}
\newcommand{\dact}{\boldsymbol{.}}
\newcommand{\lra}{\longrightarrow}
\DeclareMathOperator{\Ad}{Ad}
\DeclareMathOperator{\ad}{ad}
\DeclareMathOperator{\Aut}{Aut}
\DeclareMathOperator{\Char}{char}
\DeclareMathOperator{\Der}{Der}
\DeclareMathOperator{\GL}{GL}
\DeclareMathOperator{\Gr}{Gr}
\DeclareMathOperator{\height}{ht}
\DeclareMathOperator{\Hom}{Hom}
\DeclareMathOperator{\im}{im}
\DeclareMathOperator{\id}{id}
\DeclareMathOperator{\Irr}{Irr}
\DeclareMathOperator{\Lie}{Lie}
\DeclareMathOperator{\Mat}{Mat}
\DeclareMathOperator{\modd}{mod}
\DeclareMathOperator{\msupp}{msupp}
\DeclareMathOperator{\pr}{pr}
\DeclareMathOperator{\rk}{rk}
\DeclareMathOperator{\ssrk}{rk_{ss}}
\DeclareMathOperator{\SL}{SL}
\DeclareMathOperator{\Sp}{Sp}
\DeclareMathOperator{\supp}{supp}
\numberwithin{equation}{section}
\newtheorem{Theorem}{Theorem}[section]
\newtheorem{Lemma}[Theorem]{Lemma}
\newtheorem{Corollary}[Theorem]{Corollary}
\newtheorem{Proposition}[Theorem]{Proposition}
\theoremstyle{Theorem}
\newtheorem{Thm}{Theorem}[subsection]
\newtheorem{Lem}[Thm]{Lemma}
\newtheorem{Prop}[Thm]{Proposition}
\newtheorem{Cor}[Thm]{Corollary}
\newtheorem*{thm*}{Theorem}
\newtheorem*{cor*}{Corollary}
\theoremstyle{remark}
\newtheorem*{Remark}{Remark}
\newtheorem*{Remarks}{Remarks}
\newtheorem*{Example}{Example}
\numberwithin{equation}{section}
\begin{document}

\title{Commuting varieties for nilpotent radicals}

\author[Rolf Farnsteiner]{Rolf Farnsteiner}
\address{Mathematisches Seminar, Christian-Albrechts-Universit\"at zu Kiel, Ludewig-Meyn-Str. 4, 24098 Kiel, Germany}
\email{rolf@math.uni-kiel.de}
\subjclass[2010]{Primary 17B50, 17B45, Secondary 14L17}
\date{\today}

\begin{abstract} Let $U$ be the unipotent radical of a Borel subgroup of a connected reductive algebraic group $G$, which is defined over an algebraically closed field $k$. In this paper,
we extend work by Goodwin-R\"ohrle concerning the commuting variety of $\Lie(U)$ for $\Char(k)\!=\!0$ to fields, whose characteristic is good for $G$.  \end{abstract}

\maketitle

\section*{Introduction}
Let $G$ be a connected reductive algebraic group, defined over an algebraically closed field $k$. Given a Borel subgroup $B \subseteq G$ with unipotent radical $U$, in this paper we
investigate two closely related varieties associated with the Lie algebra $\fu:=\Lie(U)$: The commuting variety $\cC_2(\fu)$, given by
\[ \cC_2(\fu):=\{(x,y) \in \fu\!\times\!\fu \ ; \ [x,y]=0\},\]
and the variety
\[ \A(2,\fu) := \{ \fa \in \Gr_2(\fu) \ ; \ [\fa,\fa]=(0)\},\]
of two-dimensional abelian subalgebras of $\fu$, which is a closed subset of the Grassmannian $\Gr_2(\fu)$ of $2$-planes of $\fu$. 

For $\Char(k)\!=\!0$, the authors proved in \cite{GR} that $\cC_2(\fu)$ is equidimensional if and only if the adjoint action of $B$ on $\fu$ affords only finitely many orbits. Being built on methods
developed in \cite[\S2]{Pr} for $\Char(k)=0$, their arguments don't seem to readily generalize to fields of positive characteristic. In fact, most of Premet's paper \cite{Pr} is devoted to the technically 
more involved case pertaining to fields of positive characteristic.

The purpose of this note is to extend the main result of \cite{GR} by employing techniques that work in good characteristics. For arbitrary $G$, this comprises the cases $\Char(k)\!=\!0$ as well as 
$\Char(k)\!\ge\!7$. Letting $Z(G)$ and $\modd(B;\fu)$ denote the center of $G$ and the modality of $B$ on $\fu$, respectively, our main result reads as follows:

\bigskip

\begin{thm*}  Suppose that $\Char(k)$ is good for $G$. Then
\[ \dim \cC_2(\fu)=\dim B\!-\!\dim Z(G)\!+\!\modd(B;\fu).\]
Moreover, $\cC_2(\fu)$ is equidimensional if and only if $B$ acts on $\fu$ with finitely many orbits. \end{thm*}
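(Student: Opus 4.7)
The plan is to compute $\dim \cC_2(\fu)$ via the first projection $\pi\colon\cC_2(\fu)\to\fu$, $(x,y)\mapsto x$, whose fibre at $x$ is the centralizer $\fu^x$, by stratifying $\fu$ according to $B$-orbit data. Two good-characteristic inputs are used throughout: scheme-theoretic stabilizers $C_B(x)$ for $x\in\fu$ are smooth, so $\dim C_B(x) = \dim\fb^x$, and the regular nilpotent orbit lies densely in $\fu$. A weight-space argument on $\fb = \ft\oplus\fu$---exploiting that $[u,x]$ has strictly higher weights than $x$ for $u\in\fu$ while $[t,x]$ shares the weights of $x$---yields $\fb^x = \ft^x\oplus\fu^x$, hence
\[ \dim(Bx) + \dim\fu^x \;=\; \dim B - \dim\ft^x \;\leq\; \dim B - \dim Z(G), \]
with equality exactly on the $B$-stable open dense regular locus $\fu^{\reg}$ of elements whose support meets every simple root space.

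For the upper bound, stratify $\fu$ into the locally closed pieces $\fu_{[d,e]} := \{x : \dim Bx = d,\ \dim\fu^x = e\}$. Since $\pi$ has constant fibre dimension $e$ over $\fu_{[d,e]}$ and $\dim\fu_{[d,e]} \leq \dim\fu_{(d)} \leq d + \modd(B;\fu)$, combining with the centralizer identity yields $\dim\pi^{-1}(\fu_{[d,e]}) \leq \modd(B;\fu) + \dim B - \dim Z(G)$, and maximizing over strata gives the upper bound. For the matching lower bound, observe that on $\fu^{\reg}$ the relation $d+e = \dim B - \dim Z(G)$ holds identically, so that $\dim\pi^{-1}(\fu^{\reg}_{(d)}) = (\dim\fu^{\reg}_{(d)} - d) + \dim B - \dim Z(G)$, and the desired lower bound reduces to the equality $\modd(B;\fu^{\reg}) = \modd(B;\fu)$. \emph{This identity is the crux of the proof.} Given an irreducible $B$-stable $Y\subseteq\fu_{(d)}$ attaining $\dim Y - d = \modd(B;\fu)$ with $Y\cap\fu^{\reg} = \emptyset$, one has $Y\subseteq H_\alpha := \{x : x_\alpha = 0\}$ for some simple root $\alpha$; I would form the $B$-saturation $W := B\cdot(Y + v_0)$ of $Y+v_0$ (for $v_0\in\fu_\alpha\setminus\{0\}$), a $B$-stable irreducible subvariety of $\fu$ meeting $\fu^{\reg}$, and verify through the $B$-equivariant quotient $\fu\to\fu/H_\alpha \cong \fu_\alpha$ and a fibre-dimension count (adjoining the $1$-dimensional slice $\fu_\alpha$ can raise a $B$-orbit dimension by at most $1$) that the transverse dimension of $W\cap\fu^{\reg}$ is at least $\modd(B;\fu)$. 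Iterating over each simple root missing from the generic support of $Y$ completes the reduction and yields the formula.

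For the equidimensionality assertion, when $\modd(B;\fu) = 0$ the centralizer identity gives $\dim\pi^{-1}(\fu\setminus\fu^{\reg}) \leq \dim B - \dim Z(G) - 1 < \dim\cC_2(\fu)$, so $\cC_2(\fu)$ equals the closure of $\pi^{-1}(\fu^{\reg})$; the finite-orbit hypothesis then decomposes $\fu^{\reg}$ into finitely many orbits whose $\pi$-preimages are irreducible of the common dimension $\dim B - \dim Z(G)$, proving equidimensionality. Conversely, when $B$ admits infinitely many orbits I would adapt the Goodwin--R\"ohrle construction to exhibit, via a sub-regular $B$-orbit family with anomalously large centralizer, an irreducible component of $\cC_2(\fu)$ of dimension strictly below $\dim B - \dim Z(G) + \modd(B;\fu)$, breaking equidimensionality. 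I expect the modality equality in the dimension formula step to be the primary technical obstacle; the rest is essentially a bookkeeping argument once the structural identity $\fb^x = \ft^x\oplus\fu^x$ and the density of $\fu^{\reg}$ are in place.
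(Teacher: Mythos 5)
Your overall strategy---stratify $\fu$ by orbit dimension, use good characteristic to identify Lie-algebra centralizers with group centralizers, and split the count into a $T$-part and a $U$-part---is the right skeleton and matches the paper's in outline, but two of your key steps are not correct as stated and a third is missing entirely. First, the identity $\fb^x=\ft^x\oplus\fu^x$ for the fixed torus $\ft$ is false: in $\fsl_3$ with $x=E_{1,2}+E_{1,3}$ the element $\diag(1,1,-2)+3E_{2,3}$ lies in $\fb^x$ while $\ft^x=0$ and $\fu^x=kE_{1,2}\oplus kE_{1,3}$, so $\dim\fb^x=3>2$. Your weight argument breaks down because $[u,x]$ can land in weight spaces belonging to $\supp(x)$. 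The usable statement is the group-level decomposition $C_B(x)^\circ=C_U(x)^\circ\rtimes C_T(x)^\circ$ for a maximal torus $T$ chosen to contain a maximal torus of $C_B(x)^\circ$ (Lemma \ref{Df1}); with that repair, equality $\dim B\dact x+\dim\fu^x=\dim B-\dim Z(G)$ holds exactly on the \emph{distinguished} locus, which strictly contains your set $\{x:\supp(x)\supseteq\Delta\}$ (e.g.\ $E_{1,2}+E_{3,4}+E_{2,4}+E_{2,5}$ in type $A_4$). Second, the "crux" you isolate---that the $B$-modality is attained on the regular (or distinguished) locus---is essentially the Goodwin--Mosch--R\"ohrle identity $\modd(U;\fu)=\modd(B;\fu)+\rk(G)$, which the paper imports as a cited theorem precisely because it is not elementary. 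Your saturation construction $W=B\cdot(Y+v_0)$ rests on the claim that adjoining $\fu_\alpha$ raises a $B$-orbit dimension by at most $1$, which is false in general (already in $\fsl_3$, passing from $0$ to $E_{1,2}$ raises the orbit dimension by $2$), so the lower bound in the dimension formula is not established.

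The most serious gap is the "if" direction of equidimensionality. The bound $\dim\pr_1^{-1}(\fu\smallsetminus\fu^{\mathrm{reg}})\le\dim B-\dim Z(G)-1$ is false, because distinguished orbits outside the regular locus have preimages of full dimension $\dim B-\dim Z(G)$; and even a correct strict inequality would not yield "$\cC_2(\fu)$ equals the closure of $\pr_1^{-1}(\fu^{\mathrm{reg}})$"---a closed subset of smaller dimension can perfectly well contain an irreducible component, and whether it does is exactly what equidimensionality asks. What must be proved is that for every non-distinguished orbit $B\dact x$ the irreducible set $\fC(x)=\overline{\pr_1^{-1}(B\dact x)}$ is absorbed into some $\fC(y)$ with $y$ distinguished. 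The paper achieves this by invoking Hille--R\"ohrle to reduce to types $(A_n)_{n\le 4}$ and $B_2$ and then running an explicit, orbit-by-orbit containment analysis (Section \ref{S:Asg}); no soft dimension argument is known, and your proposal omits this step entirely. The converse direction (equidimensional implies finitely many orbits) is recoverable from ingredients you already have: the regular orbit is open, so $\fC(x_{\mathrm{reg}})$ is a component of dimension $\dim B-\dim Z(G)$, which is strictly smaller than $\dim\cC_2(\fu)$ whenever $\modd(B;\fu)>0$.
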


\bigskip
\noindent
If $\modd(B;\fu)\!=\!0$, then, by a theorem of Hille-R\"ohrle \cite{HR}, the almost simple components of the derived group $(G,G)$ of $G$ are of type $(A_n)_{n\le 4}$, or $B_2$. As in \cite{Pr} and 
\cite{GR}, the irreducible components are parametrized by the so-called distinguished orbits. 

Our interest in $\cC_2(\fu)$ derives from recent work \cite{CF} on the variety $\EE(2,\fu)$ of $2$-dimensional elementary abelian $p$-subalgebras of $\fu$, which coincides with $\A(2,\fu)$ whenever 
$\Char(k)\!\ge\!h(G)$, the Coxeter number of $G$. 

\bigskip

\begin{cor*} Suppose that $\Char(k)$ is good for a reductive group $G$ of semisimple rank $\ssrk(G)\!\ge\!2$. Then the following statements hold:
\begin{enumerate}
\item $\dim \A(2,\fu)=\dim B\!-\!\dim Z(G)\!+\!\modd(B;\fu)\!-\!4$.
\item $\A(2,\fu)$ is equidimensional if and only if $\modd(B;\fu)=0$.
\item $\A(2,\fu)$ is irreducible if and only if every component of $(G,G)$ has type $A_1$ or $A_2$. \end{enumerate} \end{cor*}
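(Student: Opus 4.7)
The plan is to exploit the natural incidence variety
\[
X := \{(x,y) \in \cC_2(\fu) : x, y \text{ linearly independent}\} \subseteq \cC_2(\fu),
\]
an open subvariety, together with the forgetful map $\pi : X \to \A(2,\fu)$, $(x,y) \mapsto kx + ky$. Since every plane $V \in \A(2,\fu)$ is already abelian, the fiber $\pi^{-1}(V)$ consists of the ordered bases of $V$, a $\GL_2$-torsor of dimension $4$. Thus $\pi$ is a principal $\GL_2$-bundle, and its irreducible components correspond bijectively to those of $\A(2,\fu)$ under a uniform dimension shift of $4$.

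The key step is to show that $X$ is dense in $\cC_2(\fu)$. A pair $(x,y) \in \cC_2(\fu) \setminus X$ satisfies $y \in kx$ or $x = 0$. In the generic case $x \ne 0$ and $y = \lambda x$, the nilpotency of $\fu$ gives $Z(\fu) \ne 0$, so $C_\fu(x) \supseteq kx + Z(\fu)$ has dimension at least $2$; choosing $z \in C_\fu(x) \setminus kx$, the family $t \mapsto (x, y + tz)$ lies in $X$ for $t \ne 0$ and specializes to $(x,y)$. The cases $x = 0, y \ne 0$ and $x = y = 0$ are handled analogously, using $\ssrk(G) \ge 2$ to ensure $\dim \fu \ge 2$. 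Density forces every irreducible component of $\cC_2(\fu)$ to meet $X$, so components of $X$ correspond bijectively to components of $\cC_2(\fu)$ with preserved dimensions.

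Composing the two correspondences gives a natural bijection between the irreducible components of $\A(2,\fu)$ and of $\cC_2(\fu)$, with dimensions differing uniformly by $4$. Parts (1) and (2) follow at once from the main Theorem: the dimension formula by substitution, and equidimensionality of $\A(2,\fu)$ being equivalent to that of $\cC_2(\fu)$, hence to $\modd(B;\fu) = 0$.

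For (3), the bijection reduces irreducibility of $\A(2,\fu)$ to that of $\cC_2(\fu)$. Decomposing $(G,G) = \prod_i G_i$ into almost simple factors induces $\fu = \bigoplus_i \fu_i$ and $\cC_2(\fu) = \prod_i \cC_2(\fu_i)$, so irreducibility separates across factors. Irreducibility forces $\modd(B;\fu) = 0$, whence by Hille-R\"ohrle each $G_i$ has type $A_n$ with $n \le 4$ or $B_2$, and the parametrization of components mentioned after the main Theorem identifies the components of $\cC_2(\fu_i)$ with distinguished $B$-orbits in $\fu_i$. A case-by-case verification using the known orbit classifications then shows that there is a unique distinguished orbit in types $A_1$ and $A_2$, but more than one in types $A_3, A_4, B_2$, which yields (3). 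The main conceptual step is the density of $X$; the case analysis underlying (3) draws on established results in the literature.
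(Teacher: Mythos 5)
Your setup coincides with the paper's: the open locus $\cO_2(\fu)$ of linearly independent commuting pairs, the surjection onto $\A(2,\fu)$ with $4$-dimensional $\GL_2(k)$-orbits as fibers, and the density of this locus in $\cC_2(\fu)$ (your density argument is essentially Lemma \ref{Var1}; the only blemish is that when $x$ spans a one-dimensional centre the subspace $kx+Z(\fu)$ is a line, and one should instead use $C_\fu(x)=\fu$, which has dimension $\ge 2$ because $\ssrk(G)\ge 2$). Part (1) is correct as you argue it: every component of $\A(2,\fu)$ is the closure of the image of a component of $\cO_2(\fu)$, and the fiber dimension theorem gives the uniform drop by $4$.

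The genuine gap is the claim that the irreducible components of $\cO_2(\fu)$ \emph{biject} with those of $\A(2,\fu)$. For a surjective morphism whose fibers are orbits of a connected group one only gets that each component of the target is the closure of the image of \emph{some} component of the source; if the source is not equidimensional, the closure of the image of a lower-dimensional component may be contained in the closure of the image of a larger one, so the target can be irreducible or equidimensional while the source is not. This is exactly why the paper invokes the component correspondence of \cite[(2.5.1),(2.5.2)]{CF} only \emph{after} equidimensionality of $\cC_2(\fu)$ has been secured, and why the ``only if'' directions of (2) and (3) are proved by a different mechanism: Lemma \ref{Var2}(2) shows that the image of $\cO_{\reg}\cap\fu$ is \emph{open} in $\A(2,\fu)$ (using that this is a conical open $B$-orbit), hence an honest irreducible component of dimension $\dim B\!-\!\dim Z(G)\!-\!4$; comparison with the dimension formula of (1) then forces $\modd(B;\fu)=0$ whenever $\A(2,\fu)$ is equidimensional. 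Your proposal has no substitute for this step, so the implications ``$\A(2,\fu)$ equidimensional $\Rightarrow\modd(B;\fu)=0$'' and ``$\A(2,\fu)$ irreducible $\Rightarrow\cC_2(\fu)$ irreducible'' remain unproved. A secondary issue in (3): the assertion that \emph{all} components of $\cC_2(\fu_i)$ are indexed by distinguished orbits is itself a consequence of the equidimensionality established by the case-by-case analysis of Section \ref{S:Asg}, so for the direction ``type $A_1$ or $A_2$ $\Rightarrow$ irreducible'' counting distinguished orbits does not suffice; one must check, as the paper does, that the unique distinguished component actually exhausts $\cC_2(\fu_i)$.
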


\bigskip
\noindent
For the reader's convenience, we begin by collecting a number of subsidiary results in the first two sections, some of which are variants of results in the literature.
Throughout this paper, all vector spaces over $k$ are assumed to be finite-dimensional.

\bigskip
\noindent
{\bf Acknowledgment.} I would like to thank Simon Goodwin for several helpful remarks, for pointing out a mistake in an earlier version, and for bringing references \cite{Go06} and \cite{GMR} 
to my attention.

\bigskip

\section{Preliminaries}
Let $\fg$ be a finite-dimensional Lie algebra over $k$, $\Aut(\fg)$ be its automorphism group. The commuting variety $\cC_2(\fg)$ is a conical closed subset of $\fg\!\times\!\fg$. Given a variety $X$, we 
denote by $\Irr(X)$ the set of irreducible components of $X$. Thus, each $C \in \Irr(\cC_2(\fg))$ is a conical closed subset of the affine space $\fg\!\times\!\fg$.

Recall that the group $\GL_2(k)$ acts on the affine space $\fg\!\times\!\fg$ via
\[ \left(\begin{smallmatrix}\alpha & \beta \\ \gamma & \delta \end{smallmatrix}\right)\dact (x,y) := (\alpha x\!+\!\beta y, \gamma x\!+\!\delta y),\]
with $\cC_2(\fg)$ being a $\GL_2(k)$-stable subset. In particular, the group $k^\times:=k\!\smallsetminus\!\{0\}$ acts on $\cC_2(\fg)$ via
\[ \alpha \dact (x,y) := \left(\begin{smallmatrix} 1 & 0 \\ 0 & \alpha \end{smallmatrix}\right)\dact (x,y) = (x,\alpha y).\]
We denote the two surjective projection maps by
\[ \pr_i : \cC_2(\fg) \lra \fg  \ \ \ \ \ \ \ (i \in \{1,2\}).\] 
Given $x \in \fg$, we let  $C_\fg(x)$ be the centralizer of $x$ in $\fg$. Since
\[ \pr_1^{-1}(x)=\{x\}\!\times\!C_\fg(x)\]
for all $x \in \fg$, the surjection $\pr_1: \cC_2(\fg) \lra \fg$ is a linear fibration $(\cC_2(\fg),\pr_1)$ with total space $\cC_2(\fg)$ and base space $\fg$. For any (not necessarily closed) subvariety $X 
\subseteq \fg$, we denote by $\cC_2(\fg)|_X$ the subfibration given by $\pr_1 : \pr_1^{-1}(X) \lra X$.  

\bigskip

\begin{Lemma} \label{Pre1} Let $X \subseteq \fg$ be a subvariety. Suppose that $C \subseteq \cC_2(\fg)|_X$ is a $k^\times$-stable, closed subset. Then $\pr_1(C)$ is a closed subset of $X$. 
\end{Lemma}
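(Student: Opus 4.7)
The plan is to exploit the $k^\times$-action to contract any point $(x,y) \in C$ to $(x,0)$, and then realize $\pr_1(C)$ as the preimage of $C$ under the closed embedding $x \mapsto (x,0)$, which is automatically closed.

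The first step is to verify that whenever $x \in \pr_1(C)$, one has $(x,0) \in C$. Given $(x,y) \in C$, I would consider the morphism
\[ \phi : \A^1 \lra \cC_2(\fg)|_X, \qquad \alpha \longmapsto (x,\alpha y), \]
which is well-defined since $[x,\alpha y]=0$ and $x \in X$. By $k^\times$-stability, $\phi(k^\times) \subseteq C$. As $\phi$ is continuous and $C$ is closed in $\cC_2(\fg)|_X$, the preimage $\phi^{-1}(C)$ is a closed subset of $\A^1$ containing the dense open subset $k^\times$; hence $\phi^{-1}(C)=\A^1$, and in particular $(x,0) = \phi(0) \in C$.

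With this in hand, let $\iota : X \lra \cC_2(\fg)|_X$, $x \mapsto (x,0)$, which is a well-defined morphism of varieties because $(x,0)$ trivially commutes with $x$. By the previous step $\pr_1(C) \subseteq \iota^{-1}(C)$, and the reverse inclusion is immediate from the definition of $\iota$, so $\pr_1(C) = \iota^{-1}(C)$. This is a closed subset of $X$ by continuity of $\iota$, which finishes the argument.

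I do not foresee any serious obstacle; the entire proof rests on the standard contraction-to-origin trick afforded by the $k^\times$-action, together with the observation that the fiber $\pr_1^{-1}(x) = \{x\}\!\times\!C_\fg(x)$ contains $(x,0)$.
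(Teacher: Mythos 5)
Your proposal is correct and follows essentially the same route as the paper: contract each $(x,y)\in C$ to $(x,0)$ along the $k^\times$-orbit closure, then identify $\pr_1(C)$ with $\iota^{-1}(C)$ for the section $\iota(x)=(x,0)$. The only cosmetic difference is that you phrase the limit argument via the closedness of $\phi^{-1}(C)$ in $\A^1$ rather than via $f(0)\in\overline{f(k^\times)}\subseteq C$; these are equivalent.
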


\begin{proof} We consider the morphism
\[ \iota : X \lra \cC_2(\fg)|_X \ \ ; \ \ x \mapsto (x,0).\]
Given $x \in \pr_1(C)$, we find $y \in \fg$ such that $(x,y) \in C$. By assumption, the map
\[ f : k \lra \cC_2(\fg)|_X \ \ ; \ \ \alpha \mapsto (x,\alpha y)\]
is a morphism such that $f(k^\times) \subseteq C$. Hence
\[ (x,0) = f(0) \in f(\overline{k^\times}) \subseteq \overline{f(k^\times)} \subseteq C,\]
so that $x \in \iota^{-1}(C)$. As a result, $\pr_1(C)=\iota^{-1}(C)$ is closed in $X$. \end{proof}

\bigskip

\begin{Lemma} \label{Pre2} Let $C \in \Irr(\cC_2(\fg))$. Then the following statements hold:
\begin{enumerate}
\item $\GL_2(k)\dact C = C$. 
\item The set $\pr_i(C)$ is closed. \end{enumerate} \end{Lemma}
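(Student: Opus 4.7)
For (1), the plan is to exploit the fact that $\GL_2(k)$ is irreducible as a variety, together with the maximality characterization of irreducible components. Consider the morphism
\[ \mu : \GL_2(k)\!\times\!C \lra \cC_2(\fg) \ \ ; \ \ (g,c) \mapsto g\dact c.\]
Its image $\GL_2(k)\dact C$ is constructible and irreducible, being the image of the irreducible variety $\GL_2(k)\!\times\!C$ under a morphism. Hence the closure $\overline{\GL_2(k)\dact C}$ is an irreducible closed subset of $\cC_2(\fg)$. Since the identity matrix lies in $\GL_2(k)$, this closure contains $C$, and the maximality of $C$ as an irreducible closed subset forces $\overline{\GL_2(k)\dact C}=C$. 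Thus $g\dact C\subseteq C$ for every $g\in\GL_2(k)$, and applying the same inclusion to $g^{-1}$ yields the reverse inclusion $C\subseteq g\dact C$, proving (1).

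For (2), I would apply Lemma~\ref{Pre1} with $X\!=\!\fg$. As an irreducible component of the closed set $\cC_2(\fg)\!\subseteq\!\fg\!\times\!\fg$, the set $C$ is closed, and part (1) gives in particular the $k^\times$-stability required by Lemma~\ref{Pre1}. Hence $\pr_1(C)$ is a closed subset of $\fg$. For the second projection, the cleanest route is to use the ``swap'' element $s:=\left(\begin{smallmatrix}0 & 1 \\ 1 & 0\end{smallmatrix}\right)\in\GL_2(k)$, which satisfies $\pr_2(z)=\pr_1(s\dact z)$ for every $z\in\fg\!\times\!\fg$. Combined with $s\dact C=C$ from (1), this gives $\pr_2(C)=\pr_1(C)$, which is closed by the previous step. (Alternatively, one may repeat the proof of Lemma~\ref{Pre1} with the roles of the two factors of $\fg\!\times\!\fg$ interchanged, using stability of $C$ under the one-parameter subgroup $\left(\begin{smallmatrix}\alpha & 0\\ 0 & 1\end{smallmatrix}\right)$.)

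The only conceptual point that requires care is the maximality argument in (1): one must invoke the fact that each irreducible closed subset of a noetherian space is contained in a unique maximal one, and that $C$ itself is such a maximal. Everything else is formal once the $\GL_2(k)$-action is recognized as a morphism and the projections $\pr_i$ are available. I do not anticipate a genuine obstacle.
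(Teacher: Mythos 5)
Your proposal is correct and takes essentially the same route as the paper: part (1) is the standard connectedness/irreducibility argument that the paper dismisses as a well-known consequence of $\GL_2(k)$ being connected, and part (2) applies Lemma \ref{Pre1} together with the swap $(x,y)\mapsto(y,x)$ exactly as the paper does.
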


\begin{proof} (1) This well-known fact follows from $\GL_2(k)$ being connected.

(2) As $C$ is $\GL_2(k)$-stable, Lemma \ref{Pre1} ensures that $\pr_1(C)$ is closed. By the same token, the map $(x,y) \mapsto (y,x)$ stabilizes $C$, so that $\pr_2(C)$ is closed as well.\end{proof}

\bigskip
\noindent
We next compute the dimension of $\cC_2(\fg)$ in terms of a certain invariant, that will be seen to coincide with the modality of certain group actions in our cases of interest.

Given $n \in \NN_0$, lower semicontinuity of ranks ensures that 
\[ \fg_{(n)} := \{ x \in \fg \ ; \ \rk(\ad x)\!=\!n\}\]
is a (possibly empty) locally closed subspace of $\fg$. We put $\NN_0(\fg):=\{n \in \NN_0 \ ; \ \fg_{(n)} \ne \emptyset\}$ and define
\[ \modd(\fg) := \max_{n \in \NN_0(\fg)} \dim \fg_{(n)}\!-\!n.\]

\bigskip
\noindent
Our next result elaborates on \cite[(2.1)]{GR}.

\bigskip

\begin{Proposition} \label{Pre3}The following statements hold:
\begin{enumerate}
\item Let $n \in \NN_0(\fg)$.
\begin{enumerate}
\item $(\cC_2(\fg)|_{\fg_{(n)}},\pr_1)$ is a vector bundle of rank $\dim_k\fg\!-\!n$  over $\fg_{(n)}$. In particular, the morphism $\pr_1 : \cC_2(\fg)|_{\fg_{(n)}} \lra \fg_{(n)}$ is open.
\item If $X \in \Irr(\fg_{(n)})$, then $\overline{\pr_1^{-1}(X)} \subseteq \cC_2(\fg)$ is irreducible of dimension $\dim X\!+\!\dim_k\fg\!-\!n$. 
\end{enumerate}
\item We have $\dim \cC_2(\fg)=\dim_k\fg\!+\!\modd(\fg)$. 
\item If $C \in \Irr(\cC_2(\fg))$, then 
\[\dim C = \dim \pr_1(C)\!+\!\dim_k\fg\!-\!n_C,\] 
where $n_C := \max\{n \in \NN_0 \ ; \ \fg_{(n)}\cap\pr_1(C) \ne \emptyset\}$. 
\item Let $X \in \Irr(\fg_{(n)})$ be such that $\overline{\pr_1^{-1}(X)} \in \Irr(\cC_2(\fg))$. Then we have 
\[ C_\fg(x) \subseteq \overline{X} \subseteq \overline{\fg_{(n)}} \subseteq \bigsqcup_{m\le n} \fg_{(m)} \ \ \ \  \text{for all} \ x \in X.\]
\item If $n \in \NN_0$ is such that $\modd(\fg)=\dim\fg_{(n)}\!-\!n$, then $\overline{\pr_1^{-1}(X)} \in \Irr(\cC_2(\fg))$ for every $X \in \Irr(\fg_{(n)})$ such that $\dim X = \dim \fg_{(n)}$. \end{enumerate} 
\end{Proposition}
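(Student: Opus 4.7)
Plan of proof. The strategy is to stratify $\cC_2(\fg)$ by the rank strata $\fg_{(n)}$ and reduce each assertion to the vector bundle behaviour over these strata. For (1), on $\fg_{(n)}$ the family $\{\ad x\}_{x \in \fg_{(n)}}$ of endomorphisms has constant rank $n$, so its kernels $C_\fg(x) = \kernel(\ad x)$ assemble into a vector subbundle of the trivial bundle $\fg_{(n)}\!\times\!\fg$ of rank $\dim \fg\!-\!n$; since $\pr_1^{-1}(x) = \{x\}\!\times\!C_\fg(x)$, this subbundle is exactly $\cC_2(\fg)|_{\fg_{(n)}}$, and openness of $\pr_1$ is automatic. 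Part (1)(b) is immediate: the restriction of a vector bundle to an irreducible subset $X$ of the base is irreducible of dimension $\dim X\!+\!(\dim \fg\!-\!n)$, and the closure inherits both properties. For (2), $\cC_2(\fg) = \bigsqcup_{n \in \NN_0(\fg)} \pr_1^{-1}(\fg_{(n)})$ is a finite disjoint union by lower semicontinuity of the rank, so (1)(a) gives $\dim \cC_2(\fg) = \max_n(\dim \fg_{(n)}\!+\!\dim \fg\!-\!n) = \dim \fg\!+\!\modd(\fg)$.

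For (3), let $C \in \Irr(\cC_2(\fg))$. By Lemma~\ref{Pre2}(2) the image $\pr_1(C)$ is closed, and it is irreducible as the continuous image of $C$. The open condition $\rk(\ad x)\!\ge\!n_C$ meets $\pr_1(C)$ by the definition of $n_C$, and by maximality of $n_C$ this intersection coincides with $Y := \pr_1(C) \cap \fg_{(n_C)}$, which is therefore open, non-empty, and dense in $\pr_1(C)$. Part (1)(a) presents $\pr_1^{-1}(Y)$ as the restriction of a rank $\dim \fg\!-\!n_C$ vector bundle to the irreducible $Y$, so $Z := \overline{\pr_1^{-1}(Y)}$ is irreducible of dimension $\dim \pr_1(C)\!+\!\dim \fg\!-\!n_C$ inside $\cC_2(\fg)$. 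The subset $C \cap \pr_1^{-1}(Y)$ is open, non-empty, and dense in $C$ and is contained in $Z$, so $C \subseteq Z$; since $C$ is an irreducible component, $C = Z$, yielding the dimension formula.

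For (4), put $C := \overline{\pr_1^{-1}(X)}$. Continuity of $\pr_1$ forces $\pr_1(C) \subseteq \overline{\pr_1(\pr_1^{-1}(X))} = \overline{X}$, while $X \subseteq \pr_1(C)$ is trivial; as $\pr_1(C)$ is closed by Lemma~\ref{Pre2}(2), we conclude $\pr_1(C) = \overline{X}$. For $x \in X$, part (1)(a) places the entire fibre $\{x\}\!\times\!C_\fg(x)$ inside $\pr_1^{-1}(X) \subseteq C$; applying the swap $\left(\begin{smallmatrix} 0 & 1 \\ 1 & 0 \end{smallmatrix}\right) \in \GL_2(k)$, which stabilizes $C$ by Lemma~\ref{Pre2}(1), yields $C_\fg(x)\!\times\!\{x\} \subseteq C$, and hence $C_\fg(x) \subseteq \pr_1(C) = \overline{X}$. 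The remaining chain $\overline{X} \subseteq \overline{\fg_{(n)}} \subseteq \bigsqcup_{m \le n} \fg_{(m)}$ follows from closedness of $\{x : \rk(\ad x) \le n\}$ under lower semicontinuity of rank. Finally, (5) is a dimension count: (1)(b) gives $\overline{\pr_1^{-1}(X)}$ irreducible of dimension $\dim X\!+\!\dim \fg\!-\!n = \dim \fg_{(n)}\!+\!\dim \fg\!-\!n = \dim \fg\!+\!\modd(\fg) = \dim \cC_2(\fg)$ by (2), and an irreducible closed subset of $\cC_2(\fg)$ of maximal dimension must be a full irreducible component. The main obstacle is the bookkeeping in (3), where one has to combine Lemma~\ref{Pre2} (to secure closedness of $\pr_1(C)$), the stratification, and the vector bundle structure from (1)(a) in order to identify the arbitrary component $C$ with the canonical candidate $Z$.
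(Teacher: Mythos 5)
Your proof is correct and follows essentially the same route as the paper: the rank stratification $\fg_{(n)}$, the kernel vector bundle over each stratum, the fiber-dimension theorem, and $\GL_2(k)$-stability for part (4). The only cosmetic difference is in (3), where you identify $C$ directly with the closure of the bundle over $Y=\pr_1(C)\cap\fg_{(n_C)}$ via a density argument, rather than first invoking the global decomposition of $\cC_2(\fg)$ into the sets $\overline{\pr_1^{-1}(X)}$ to write $C=\overline{\pr_1^{-1}(X_C)}$; both arguments rest on the same irreducibility and closedness facts (Lemma \ref{Pre1}/\ref{Pre2}).
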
 

\begin{proof} (1a) If $V,W$ are $k$-vector spaces and $\Hom_k(V,W)_{(n)} := \{ f \in \Hom_k(V,W) \ ; \ \rk(f)\!=\!n\}$, then the map
\[ \Hom_k(V,W)_{(n)} \lra \Gr_{\dim_kV-n}(V) \ \ ; \ \ f \mapsto \ker f\]
is a morphism. Consequently,
\[ C_\fg : \fg_{(n)} \lra \Gr_{\dim_k\fg-n}(\fg) \ \ ; \ \ x \mapsto C_\fg(x)\]
is a morphism as well and general theory implies that 
\[ E_{C_\fg} := \{(x,y) \in \fg_{(n)}\!\times\!\fg \ ; \ y \in C_\fg(x)\}\]
is a vector bundle of rank $\dim_k\fg\!-\!n$ over $\fg_{(n)}$, which coincides with $\cC_2(\fg)|_{\fg_{(n)}}$, see \cite[(VI.1.2)]{Sh}. 

(1b) Given an irreducible component $X \in \Irr(\fg_{(n)})$, we consider the subbundle $\cC_2(\fg)|_X = \cC_2(\fg)\cap(X\!\times\!\fg)$ together with its surjection $\pr_1 : \cC_2(\fg)|_X \lra X$. 

Let $C \in \Irr(\cC_2(\fg)|_X)$ be an irreducible component. Since $\cC_2(\fg)|_X$ is $k^\times$-stable, so is $C$. In view of Lemma \ref{Pre1}, we conclude that $\pr_1(C)$ is closed in $X$. It now 
follows from \cite[(1.5)]{Fa04} that the variety $\pr_1^{-1}(X)$ is irreducible. Hence its closure enjoys the same property. Consequently, 
\[ \pr_1 : \overline{\pr_1^{-1}(X)} \lra \overline{X} \]
is a dominant morphism of irreducible affine varieties such that $\dim \pr_1^{-1}(x)=\dim_k\ker(\ad x) = \dim_k\fg\!-\!n$ for every $x \in X$. Since $X$ is locally closed, it is an open subset of 
$\overline{X}$. The fiber dimension theorem thus yields
\[ \dim \overline{\pr_1^{-1}(X)} = \dim \overline{X}\!+\!\dim_k\fg\!-\!n = \dim X\!+\!\dim_k\fg\!-\!n,\]
as desired.

(2) We have
\[ (\ast) \ \ \ \ \ \ \ \ \ \cC_2(\fg) = \bigcup_{n\in \NN_0(\fg)} \bigcup_{X \in \Irr(\fg_{(n)})} \overline{\pr_1^{-1}(X)},\]
whence
\[ \dim \cC_2(\fg) = \max_{n\in \NN_0(\fg)}\max_{X \in \Irr(\fg_{(n)})} \dim X\!+\!\dim_k \fg\!-\!n =  \max_{n\in \NN_0(\fg)} \dim \fg_{(n)}\!+\!\dim_k\fg\!-\!n = \dim_k\fg\!+\!\modd(\fg),\]
as asserted. 

(3) In view of (1b) and ($\ast$), there are $n_C \in \NN_0$ and $X_C \in \Irr(\fg_{(n_C)})$ such that 
\[ C = \overline{\pr_1^{-1}(X_C)}.\]
Since $\pr_1$ is surjective, we have $X_C = \pr_1(\pr_1^{-1}(X_C))$. Consequently, $\pr_1(C) = \pr_1(\overline{\pr_1^{-1}(X_C)}) \subseteq \overline{X_C}$, while $X_C \subseteq \pr_1(C)$
in conjunction with Lemma \ref{Pre1} yields $\overline{X_C} \subseteq \pr_1(C)$. Thus, lower semicontinuity of the rank function yields
\[ \pr_1(C) \subseteq \overline{\fg_{(n_C)}} \subseteq \bigsqcup_{n \le n_C} \fg_{(n)},\]
so that $\max\{n \in \NN_0 \ ; \ \pr_1(C)\cap \fg_{(n)}\ne \emptyset\} \le n_C$. On the other hand, $\emptyset \ne X_C \subseteq \pr_1(C)\cap \fg_{(n_C)}$ implies $n_C \le \max\{n \in \NN_0 \ ; \ 
\pr_1(C)\cap \fg_{(n)}\ne \emptyset\}$. Hence we have equality and (1b) yields
\[ \dim C = \dim X_C\!+\!\dim_k\fg\!-\!n_C = \dim \overline{X_C}\!+\!\dim_k\fg\!-\!n_C = \dim \pr_1(C)\!+\!\dim_k\fg\!-\!n_C,\]
as desired. 

(4) Let $x \in X$. Then we have $\{x\}\!\times\!C_\fg(x) = \pr_1^{-1}(x) \subseteq \overline{\pr_1^{-1}(X)}$. By assumption, the latter set is $\GL_2(k)$-stable, so that in particular
$C_\fg(x)\!\times\!\{x\} \subseteq \overline{\pr_1^{-1}(X)}$. It follows that
\[ C_\fg(x) \subseteq \overline{X} \ \ \ \ \ \ \forall \ x \in X.\]
Since $\overline{X} \subseteq \overline{\fg_{(n)}} \subseteq \bigsqcup_{m \le n} \fg_{(m)}$, our assertion follows. 

(5) This follows from (1b) and (2). \end{proof}

\bigskip

\begin{Corollary} \label{Pre4}The following statements hold:
\begin{enumerate}
\item The subset $\overline{\pr_1^{-1}(\fg_{(\max \NN_0(\fg))})}$ is an irreducible component of $\cC_2(\fg)$ of dimension $2\dim_k\fg\!-\!\max \NN_0(\fg)$. 
\item Suppose that $\cC_2(\fg)$ is equidimensional. Then we have $\modd(\fg)=\dim_k\fg\!-\!\max\NN_0(\fg)$.
\item Suppose that $\cC_2(\fg)$ is irreducible. Then we have $\dim\fg_{(n)}\!-\!n=\modd(\fg)$ if and only if $n=\max\NN_0(\fg)$. \end{enumerate} \end{Corollary}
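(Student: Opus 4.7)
The plan is to read all three statements off of Proposition \ref{Pre3}, exploiting its description of every irreducible component of $\cC_2(\fg)$ as a closure $\overline{\pr_1^{-1}(X)}$ for some $X \in \Irr(\fg_{(n)})$ of maximal possible dimension within its stratum.

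Set $N:=\max\NN_0(\fg)$. Lower semicontinuity of the rank function together with the maximality of $N$ makes $\fg_{(N)}$ a nonempty open subset of the irreducible affine space $\fg$, so $\fg_{(N)}$ is itself irreducible with $\dim\fg_{(N)}=\dim_k\fg$. Proposition \ref{Pre3}(1b) therefore produces the irreducible closed subvariety $Z:=\overline{\pr_1^{-1}(\fg_{(N)})}$ of dimension $2\dim_k\fg-N$. For (1), to see that $Z$ is already a component of $\cC_2(\fg)$, let $C \supseteq Z$ be any such component; the invariant $n_C$ from Proposition \ref{Pre3}(3) satisfies $n_C \ge N$ (since $\fg_{(N)} \subseteq \pr_1(C)$) and $n_C \le N$ (by maximality of $N$), whence $\dim C = \dim\pr_1(C)+\dim_k\fg-N \le 2\dim_k\fg-N = \dim Z$, forcing $C=Z$. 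Statement (2) is then immediate: the dimension formula $\dim\cC_2(\fg)=\dim_k\fg+\modd(\fg)$ of Proposition \ref{Pre3}(2) combined with equidimensionality makes $Z$ realize the global dimension, yielding $\modd(\fg)=\dim_k\fg-N$.

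For (3), irreducibility implies equidimensionality, so (2) supplies $\modd(\fg)=\dim_k\fg-N$, and the ``if'' direction is automatic from $\dim\fg_{(N)}-N=\dim_k\fg-N$. For the converse, given $n$ with $\dim\fg_{(n)}-n=\modd(\fg)$, pick $X \in \Irr(\fg_{(n)})$ of maximal dimension $\dim\fg_{(n)}$; Proposition \ref{Pre3}(5) then makes $\overline{\pr_1^{-1}(X)}$ a component, which by irreducibility of $\cC_2(\fg)$ must coincide with $Z$. The only real obstacle is to extract $n=N$ from this equality, and this rests on the fact, already established inside the proof of Proposition \ref{Pre3}(3), that $n_C$ is an intrinsic invariant of a component $C$, computable as $\max\{m \ ; \ \pr_1(C)\cap\fg_{(m)}\ne\emptyset\}$; reading it off the two presentations of $Z$ gives $n_Z=n$ and $n_Z=N$, hence $n=N$.
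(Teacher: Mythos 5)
Your proof is correct and follows essentially the same route as the paper: all three parts are read off Proposition \ref{Pre3}, with (2) and (3) argued exactly as in the text. The only (harmless) variation is in (1), where the paper concludes maximality of $\overline{\pr_1^{-1}(\fg_{(N)})}$ by observing that $\pr_1^{-1}(\fg_{(N)})$ is a non-empty open, hence dense, subset of any component containing it, while you instead bound $\dim C$ via the formula of Proposition \ref{Pre3}(3) together with $n_C=N$; both arguments are valid.
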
 

\begin{proof} (1) Let $n_0:= \max \NN_0(\fg)$. By lower semicontinuity of the function $x \mapsto \rk(\ad x)$,  $\fg_{(n_0)}$ is an open, and hence irreducible and dense subset of $\fg$.
Hence $\pr_1^{-1}(\fg_{(n_0)})$ is open in $\cC_2(\fg)$, and Proposition \ref{Pre3} shows that $C_{(n_0)}:= \overline{\pr_1^{-1}(\fg_{(n_0)})}$ is irreducible of dimension
$\dim \fg_{(n_0)}\!+\!\dim_k\fg\!-\!n_0 = 2\dim_k\fg\!-\!n_0$. Let $C \in \Irr(\cC_2(\fg))$ be such that $C_{n_0} \subseteq C$. Then $\pr_1^{-1}(\fg_{(n_0)})$ is a non-empty open subset of $C$,
so that $C_{n_0}= C \in \Irr(\cC_2(\fg))$. 

(2) This follows directly from (1) and Proposition \ref{Pre3}(2).

(3) Suppose that $n \in \NN_0(\fg)$ is such that $\modd(\fg)=\dim\fg_{(n)}\!-\!n$. Let $X \in \Irr(\fg_{(n)})$ be an irreducible component such that $\dim X = \dim\fg_{(n)}$. Thanks to Proposition 
\ref{Pre3}(5), $C_X:=\overline{\pr_1^{-1}(X)}$ is an irreducible component of $\cC_2(\fg)$, so that $C_X = \cC_2(\fg)$. Consequently,
\[ \fg = \pr_1(\cC_2(\fg)) = \pr_1(C_X) \subseteq \overline{X} \subseteq \bigcup_{m\le n} \fg_{(m)},\]
so that $\max\NN_0(\fg)\le n$. Hence we have equality. \end{proof} 

\bigskip
\noindent
In general, the value of $\modd(\fg)$ is hard to compute. For certain Lie algebras of algebraic groups and for those having suitable filtrations, the situation is somewhat better. 

\bigskip

\begin{Example} Let $\Char(k)=p\ge 5$ and consider the $p$-dimensional Witt algebra $W(1):=\Der_k(k[X]/(X^p))$, see \cite[(IV.2)]{SF} for more details. This simple Lie algebra affords a canonical 
descending filtration
\[ W(1)=W(1)_{-1} \supseteq W(1)_0 \supseteq \cdots \supseteq W(1)_{p-2} \supseteq (0),\]
where $\dim_kW(1)_i = p\!-\!1\!-\!i$. By way of illustration, we shall verify the following statements:
\begin{enumerate}
\item The variety $\cC_2(W(1))$ has dimension $p\!+\!1$ and is not equidimensional, with 
\[\Irr(\cC_2(W(1))) = \{\overline{\pr_1^{-1}(W(1)_{(\ell)})} \ ; \ \frac{p\!+\!1}{2}\! \le \! \ell \! \le p\!-\!1\}.\]
\item Let $\fb:=W(1)_0$. The variety $\cC_2(\fb)$ has pure dimension $p$, with 
\[\Irr(\cC_2(\fb)) = \{\overline{\pr_1^{-1}(\fb_{(\ell)})} \ ; \  \frac{p\!-\!1}{2}\!\le \!\ell\!\le \! p\!-\!2\}.\]
\item (cf.\ \cite[(4.3)]{YC}) Let $\fu:= W(1)_1$. The variety $\cC_2(\fu)$ has pure dimension $p$, with 
\[\Irr(\cC_2(\fu)) = \{\overline{\pr_1^{-1}(\fu_{(\ell)})} \ ; \ \frac{p\!-\!3}{2}\le\!\ell\!\le\!p\!-\!4\}.\]
\item (cf.\ \cite[(3.6)]{YC}) Let $\cN:=\{x \in W(1) \ ; \ (\ad x)^p=0\}$ be the $p$-nilpotent cone of $W(1)$. The variety $\cC_2(\cN):= \cC_2(W(1))\cap(\cN\!\times\!\cN)$ has pure dimension $p$ 
with 
\[\Irr(\cC_2(\cN))=\{\overline{\pr_1^{-1}(W(1)_{(\ell)})} \ ; \ \ell \in \{\frac{p\!+\!1}{2},\ldots,\!p\!-\!2\}\}\cup \{\overline{\pr_1^{-1}(W(1)_{(p-1)}\cap\cN)}\}.\] \end{enumerate}
\begin{proof} (1) Let $x \in W(1)\!\smallsetminus\!\{0\}$ and consider the Jordan-Chevalley-Seligman decomposition $x=x_s\!+\!x_n$, with $x_s$ semisimple, $x_n$ $p$-nilpotent and $[x_s,x_n] =0$,
(cf.\ \cite[(II.3.5)]{SF}). Since every maximal torus $\ft \subseteq W(1)$ is one-dimensional and self-centralizing, the assumption $x_s \ne 0$ entails $x_n \in C_{W(1)}(x_s)=kx_s$, so that $x_n=0$. 
As a result, every $x \in W(1)\!\smallsetminus\!\{0\}$ is either $p$-nilpotent or semisimple, and \cite[(2.3)]{YC} implies
\[ \ker (\ad x) = \left\{ \begin{array}{ccc}  W(1)_{p-1-i} & x \in W(1)_i\!\smallsetminus\! W(1)_{i+1} & \frac{p-1}{2}\!\le\!i\!\le\! p\!-\!2 \\ kx\!\oplus\! W(1)_{p-1-i} & x \in W(1)_{i}\!\smallsetminus\!W(1)_{i+1} 
                                                                   & 1\!\le\!i\!\le\!\frac{p-3}{2} \\
                                                                   kx & x \in W(1)\!\smallsetminus\!W(1)_1. & \end{array} \right.\]
This in turn yields
\[ W(1)_{(\ell)} = \left\{ \begin{array}{cc}  W(1)_{p-\ell}\!\smallsetminus\! W(1)_{p-\ell+1} & 2\!\le\!\ell\!\le\! \frac{p-1}{2} \\ 
 W(1)_{\frac{p-3}{2}}\!\smallsetminus\!W(1)_{\frac{p+1}{2}}  & \ell\!=\!\frac{p+1}{2} \\ W(1)_{p-\ell-1}\!\smallsetminus\!W(1)_{p-\ell} 
                                                                   & \frac{p+3}{2}\!\le\!\ell\le\!p\!-\!2 \\
                                                                   W(1)\!\smallsetminus\!W(1)_1 & \ell\!=\!p\!-\!1\\
                                                                   \{0\} & \ell\!=\!0\\
                                                                        \emptyset & \text{else.}  \end{array} \right.\]
We thus have $\modd(W(1))\!=\!1$, so that $\dim \cC_2(W(1))\!=\!p\!+\!1$. Moreover, each of the varieties $W(1)_{(\ell)}$ is irreducible, with $\overline{W(1)_{(\ell)}} = W(1)_{p-\ell}$ for
$2\!\le\!\ell\!\le\!\frac{p-1}{2}$. Proposition \ref{Pre3}(4) in conjunction with the above now shows that $\overline{\pr_1^{-1}(W(1)_{(\ell)})} \not \in \Irr(\cC_2(W(1)))$ for $2\!\le\!\ell\!\le\!\frac{p-1}{2}$.
Consequently,
\[ (\ast) \ \ \ \ \ \ \ \ \cC_2(W(1)) = \bigcup_{\frac{p+1}{2}\le \ell\le p-1} \overline{\pr_1^{-1}(W(1)_{(\ell)})}.\]
According to Corollary \ref{Pre4},
\[\overline{\pr_1^{-1}(W(1)_{(p-1)})} = \overline{\bigcup_{x \in W(1)\smallsetminus W(1)_1}\{x\}\!\times\!kx} \subseteq \{(x,y) \in \cC_2(W(1)) \ ; \ \dim_k kx\!+\!ky\!\le\!1\}\]
is an irreducible component of dimension $p\!+\!1$. Let $\ell \in \{\frac{p+1}{2}, \ldots, p\!-\!2\}$. Given $x \in W(1)_{(\ell)}$, it thus follows that
\[ \{x\}\!\times\!C_{W(1)}(x) \subseteq \overline{\pr_1^{-1}(W(1)_{(\ell)})} \ \ \text{while} \ \  \{x\}\!\times\!C_{W(1)}(x) \not \subseteq \overline{\pr_1^{-1}(W(1)_{(p-1)})},\]
whence 
\[ \overline{\pr_1^{-1}(W(1)_{(\ell)})} \not \subseteq \overline{\pr_1^{-1}(W(1)_{(p-1)})}.\]
Thanks to Proposition 1.3(3) we have 
\[\dim \overline{\pr_1^{-1}(W(1)_{(\ell)})} = \dim_kW(1)_{p-\ell-1}\!+\!\dim_kW(1)\!-\!\ell =p,\] 
so that there are no containments among the irreducible sets $(\overline{\pr_1^{-1}(W(1)_{(\ell)})})_{\frac{p+1}{2}\le \ell \le p-2}$. As a result, ($\ast$) is the decomposition of $\cC_2(W(1))$ into its 
irreducible components.
 
(2) We now consider the ``Borel subalgebra'' $\fb:= W(1)_0$ of dimension $p\!-\!1$. Writing $W(1)=ke_{-1}\!\oplus\!\fb$ with $C_{W(1)}(e_{-1})=ke_{-1}$, we have $(\ad x)(W(1))=
k[x,e_{-1}]\!\oplus\!(\ad x)(\fb)$ for all $x \in \fb$, whence  $\fb_{(\ell)} = W(1)_{(\ell+1)}$ for $1\!\le\!\ell\!\le\!p\!-\!3$, while $\fb_{(p-2)} = \fb\!\smallsetminus\!W(1)_1$. Consequently,
 \[ \dim \fb_{(\ell)} = \left\{ \begin{array}{cc}  \ell & 1\!\le\!\ell\!\le\! \frac{p-3}{2} \\ \ell\!+\!1 &  \frac{p-1}{2}\!\le\!\ell\le\!p\!-\!2\\ 0 & \ell\!=\!0\\ -1 & \text{else,} \end{array} \right.\]
where we put $\dim\emptyset = -1$. Thus, $\modd(\fb)=1$ and $\dim \cC_2(\fb)=p$. The arguments above show that $\overline{\pr_1^{-1}(\fb_{(\ell)})} \not \in \Irr(\cC_2(\fb))$, whenever 
$1\!\le\!\ell\!\le\! \frac{p-3}{2}$. In view of the irreducibility of $\fb_{(\ell)}$, Proposition \ref{Pre3}(5) shows that $\overline{\pr_1^{-1}(\fb_{(\ell)})}$ is an irreducible component of dimension $p$ for 
$\ell \in \{\frac{p-1}{2},\ldots, p\!-\!2\}$. 

(3) We next consider $\fu:=W(1)_1$ and observe that $\fu_{(\ell)} = \fb_{(\ell+1)}\cap\fu$ for $0\!\le\!\ell\!\le\!p\!-\!3$. Consequently,  
 \[ \dim \fu_{(\ell)} = \left\{ \begin{array}{cc}  \ell\!+\!1 & 0\!\le\!\ell\!\le\! \frac{p-5}{2} \\ \ell\!+\!2 &  \frac{p-3}{2}\!\le\!\ell\le\!p\!-\!4\\ -1 & \text{else,} \end{array} \right.\]
so that $\modd(\fu)=2$ and $\dim \cC_2(\fu)=p$. The remaining assertions follow as in (2). 

(4) In view of \cite[(2.3)]{YC}, we have $C_\fg(x) \subseteq \cN$ for all $x \in \cN\!\smallsetminus\!\{0\}$. This implies
\[\cC_2(\cN) = \bigcup_{2 \le \ell \le p\!-\!1} \overline{\pr_1^{-1}(W(1)_{(\ell)}\cap \cN)} = \bigcup_{2 \le \ell \le p\!-\!2} \overline{\pr_1^{-1}(W(1)_{(\ell)})}\cup\overline{\pr_1^{-1}(W(1)_{(p-1)}\cap\cN)}.\]
By the arguments above, we have $\pr_1^{-1}(W(1)_{(\ell))} \subseteq \bigcup_{\frac{p+1}{2}\le n \le p-2}\overline{\pr_1^{-1}(W(1)_{(n)})}$ for $\ell \in \{1,\ldots,\frac{p\!-\!1}{2}\}$, so that
\[ \cC_2(\cN) = \bigcup_{\frac{p+1}{2}\le \ell\le p-2} \overline{\pr_1^{-1}(W(1)_{(\ell)})}\cup (\overline{\pr_1^{-1}(W(1)_{(p-1)}\cap \cN)}.\]
By work of Premet \cite{Pr0}, the variety $\cN$ is irreducible of dimension $\dim \cN=p\!-\!1$. It follows that the dense open subset $W(1)_{(p-1)}\cap\cN$ is irreducible as well. Lemma
\ref{Pre1} implies that $\pr_1(C)$ is closed in $W(1)_{(p-1)}\cap\cN$ for every $C \in \Irr(\cC_2(\cN)|_{W(1)_{(p-1)}\cap\cN})$. Using \cite[(1.5)]{Fa04}, we conclude that
the variety 
\[  \pr_1^{-1}(W(1)_{(p-1)}\cap \cN)= \cC_2(\cN)|_{W(1)_{(p-1)}\cap\cN}\] 
is irreducible of dimension $p$. \end{proof} \end{Example}

\bigskip

\begin{Remarks} (1) \ In \cite[(Thm.5)]{Le} P.\ Levy has shown that commuting varieties of Lie algebras of reductive algebraic groups are irreducible, provided the characteristic of $k$ is good for 
$\fg$. For $p\!=\!3$, we have $W(1) \cong \fsl(2)$, so that $\cC_2(W(1))$ is in fact irreducible. Our example above shows that commuting varieties of Lie algebras, all whose maximal tori are 
self-centralizing, may not even be equidimensional. In contrast to $W(1)$, the ``Borel subalgebra'' $\fb \subseteq W(1)$, whose maximal tori are also self-centralizing, is an algebraic Lie algebra. 

(2) \ A consecutive application of (4) and \cite[(2.5.1),(2.5.2)]{CF} implies that the variety $\EE(2,W(1))$ of two-dimensional elementary abelian subalgebras of $W(1)$ has pure dimension $p\!-\!4$
as well as $|\Irr(\EE(2,W(1)))|$ $=\frac{p-3}{2}$. \end{Remarks}

\bigskip

\section{Algebraic Lie algebras}
Let $\fg=\Lie(G)$ be the Lie algebra of a connected algebraic group $G$. The adjoint representation 
\[ \Ad : G \lra \Aut(\fg)\]
induces an action
\[ g\dact(x,y) := (\Ad(g)(x), \Ad(g)(y))\]
of $G$ on the commuting variety $\cC_2(\fg)$ such that the surjections
\[ \pr_i : \cC_2(\fg) \lra \fg\]
are $G$-equivariant. In the sequel, we will often write $g\dact x:= \Ad(g)(x)$ for $g \in G$ and $x \in \fg$. 

Let $T \subseteq G$ be a maximal torus with character group $X(T)$, 
\[ \fg = \fg^T\!\oplus\!\bigoplus_{\alpha \in R_T} \fg_\alpha\]
be the root space decomposition of $\fg$ relative to $T$. Here $R_T \subseteq X(T)\!\smallsetminus\!\{0\}$ is the set of roots of $G$ relative to $T$, while $\fg^T := \{x \in \fg \ ; \ t\dact x = x 
\ \ \forall \ t \in T\}$ denotes the subalgebra of points of $\fg$ that are fixed by $T$. Given $x = x_0\!+\! \sum_{\alpha \in R_T}x_\alpha \in \fg$, we let 
\[ \supp(x):=\{\alpha \in R_T \ ; \ x_\alpha \ne 0\}\]
be the {\it support} of $x$. For any subset $S \subseteq X(T)$, we denote by $\ZZ S$ the subgroup of $X(T)$ generated by $S$. The group $\ZZ R_T $ is
the called the {\it root lattice} of $G$ relative to $T$.

If $H\subseteq G$ is a closed subgroup and $x \in \fg$, then $C_H(x):=\{ h \in H \ ; \ h\dact x = x\}$ is the centralizer of $x$ in $H$. 

\bigskip

\subsection{Centralizers, supports, and components}

\begin{Lem} \label{CSC1} Let $T \subseteq G$ be a maximal torus, $x \in \fg$. Then we have
\[ \dim C_T(x) = \dim T\!-\!\rk(\ZZ\supp(x)).\] \end{Lem}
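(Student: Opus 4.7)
The plan is to use the root space decomposition to reduce the assertion to a standard fact about tori and their character groups. Writing $x = x_0 + \sum_{\alpha \in R_T} x_\alpha$ with $x_\alpha \in \fg_\alpha$, the fact that $T$ acts on $\fg_\alpha$ via the character $\alpha$ gives
\[ t\dact x \;=\; x_0 + \sum_{\alpha \in R_T} \alpha(t)\, x_\alpha \qquad (t \in T). \]
Since the sum is direct, this equals $x$ if and only if $\alpha(t)=1$ for every $\alpha$ with $x_\alpha \ne 0$. Hence, setting $S := \supp(x)$, I obtain the scheme-theoretic identity
\[ C_T(x) \;=\; \bigcap_{\alpha \in S} \ker(\alpha). \]

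Next I would introduce the morphism of algebraic groups
\[ \phi_S : T \lra \GG_m^{\,S} \ \ ; \ \ t \mapsto \bigl(\alpha(t)\bigr)_{\alpha \in S}, \]
whose kernel is exactly $C_T(x)$. Because $T$ is connected, the image $\im \phi_S$ is a connected closed subgroup of the torus $\GG_m^S$, hence itself a subtorus, and I would identify its character group via the dual homomorphism $\phi_S^\ast : X(\GG_m^{\,S}) = \ZZ^S \lra X(T)$. By construction, $\phi_S^\ast$ sends the standard basis element indexed by $\alpha$ to $\alpha \in X(T)$, so the image of $\phi_S^\ast$ is precisely $\ZZ S = \ZZ\supp(x)$. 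The surjectivity of $\phi_S$ onto $\im \phi_S$ then translates (by the duality between diagonalizable groups and their character groups) into the identity $X(\im \phi_S) = \ZZ\supp(x)$, so that
\[ \dim \im \phi_S \;=\; \rk\bigl(\ZZ \supp(x)\bigr). \]

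Combining these observations with the dimension formula $\dim T = \dim \ker \phi_S + \dim \im \phi_S$, I conclude
\[ \dim C_T(x) \;=\; \dim \ker \phi_S \;=\; \dim T - \rk\bigl(\ZZ \supp(x)\bigr), \]
as claimed. I do not expect any real obstacle here: the only point that needs care is the passage from the map $\phi_S$ to the rank of the character group of its image, which is a routine application of the equivalence between tori and finitely generated free abelian groups. Possible non-reducedness of the scheme-theoretic kernel is harmless since dimensions are being computed on the underlying reduced subgroups.
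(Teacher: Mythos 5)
Your proof is correct and follows essentially the same line as the paper: both arguments reduce the statement to the identity $C_T(x)=\bigcap_{\alpha\in\supp(x)}\ker\alpha$ and then invoke the dictionary between diagonalizable groups and their character lattices. The only cosmetic difference is that the paper computes the Krull dimension of $k[C_T(x)]\cong k(X(T)/\ZZ\supp(x))$ directly (so that $\dim C_T(x)=\rk(X(T)/\ZZ\supp(x))$), whereas you compute $\dim\im\phi_S=\rk(\ZZ\supp(x))$ for the auxiliary homomorphism $\phi_S:T\lra\GG_m^S$ and subtract; these are dual formulations of the same calculation.
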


\begin{proof} Writing 
\[ x = \sum_{\alpha \in R_T\cup\{0\}} x_\alpha,\]
we see that $C_T(x)=\bigcap_{\alpha \in \supp(x)}\ker\alpha = \bigcap_{\alpha \in \ZZ\supp(x)}\ker\alpha$. Since $T$ is a torus, its coordinate ring $k[T]$ is the group algebra $kX(T)$ of 
$X(T) \subseteq k[T]^\times$. By the above, the centralizer $C_T(x)$ coincides with the zero locus $Z(\{ \alpha\!-\!1 \ ; \ \alpha \in \ZZ\supp(x)\})$. Thus, letting $(k\ZZ\supp(x))^\dagger$
denote the augmentation ideal of $k\ZZ\supp(x)$, we obtain the ensuing equalities of Krull dimensions
\begin{eqnarray*}
\dim k[C_T(x)] & = & \dim k[T]/k[T]\{ \alpha\!-\!1 \ ; \ \alpha \in \ZZ\supp(x)\} =  \dim kX(T)/kX(T)(k\ZZ\supp(x))^\dagger \\
                & =& \dim k(X(T)/\ZZ\supp(x)),
\end{eqnarray*}
so that \cite[(3.2.7)]{Sp98} yields 
\[\dim C_T(x) = \dim k[C_T(x)] = \rk(X(T)/\ZZ\supp(x)) = \dim T\!-\!\rk(\ZZ\supp(x)),\] 
as desired. \end{proof}

\bigskip
\noindent
Let $\fg:= \Lie(G)$ be the Lie algebra of a connected algebraic group $G$, $\fn \subseteq \fg$ be a $G$-stable subalgebra. Then $\cC_2(\fn) \subseteq \cC_2(\fg)$ is a closed, $G$-stable subset. 
For $x \in \fn$, we define
\[ \fC(x):= \overline{G\dact(\{x\}\!\times\!C_\fn(x))} \subseteq \cC_2(\fn).\]
Then $\fC(x)=\overline{\pr_1^{-1}(G\dact x)}$ is a closed irreducible subset of $\cC_2(\fn)$ such that $\fC(x)=\fC(g\dact x)$ for all $g \in G$. 

It will be convenient to have the following three basic observations at our disposal.

\bigskip

\begin{Lem} \label{CSC2} Let $\fx,\fy : k \lra \fn$ be morphisms, $\cO \subseteq k$ be a non-empty open subset such that
\begin{enumerate}
\item[(a)] $[\fx(\alpha),\fy(\alpha)] = 0 \ \ \ \ \ \ \forall \ \alpha \in k$, and
\item[(b)] $\fx(\alpha) \in G\dact \fx(1) \  \  \  \   \  \  \  \ \forall \ \alpha \in \cO$.
\end{enumerate}
Then we have $(\fx(0),\fy(0)) \in \fC(\fx(1))$. \end{Lem}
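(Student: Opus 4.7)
The plan is to assemble the data into a single morphism from the affine line into $\cC_2(\fn)$ and then pass to the closure.

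First, I would consider the map
\[ F : k \lra \fn\!\times\!\fn \ \ ; \ \ \alpha \mapsto (\fx(\alpha),\fy(\alpha)), \]
which is a morphism because $\fx$ and $\fy$ are. Hypothesis (a) guarantees that $F$ factors through $\cC_2(\fn)$, so we may view $F : k \lra \cC_2(\fn)$ as a morphism of varieties.

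Next, I would check that $F(\cO) \subseteq \fC(\fx(1))$. For $\alpha \in \cO$, assumption (b) provides $g_\alpha \in G$ with $\fx(\alpha) = g_\alpha\dact\fx(1)$, while (a) ensures $\fy(\alpha) \in C_\fn(\fx(\alpha))$. Hence
\[ F(\alpha) = (g_\alpha\dact \fx(1), \fy(\alpha)) \in \pr_1^{-1}(G\dact \fx(1)) \subseteq \fC(\fx(1)), \]
using the identification $\fC(\fx(1)) = \overline{\pr_1^{-1}(G\dact\fx(1))}$ recorded just before the lemma.

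Finally, since $\cO$ is a non-empty open subset of the irreducible variety $k$, its closure in $k$ is all of $k$. Continuity of $F$ yields
\[ F(k) = F(\overline{\cO}) \subseteq \overline{F(\cO)} \subseteq \overline{\fC(\fx(1))} = \fC(\fx(1)), \]
the last equality holding because $\fC(\fx(1))$ is closed by construction. Evaluating at $\alpha = 0$ gives $(\fx(0),\fy(0)) = F(0) \in \fC(\fx(1))$, as required. There is no real obstacle here; the only point requiring any care is the clean identification of $F(\cO)$ with a subset of $\pr_1^{-1}(G\dact\fx(1))$, which is immediate from the two hypotheses.
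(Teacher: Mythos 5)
Your argument is correct and is essentially the paper's proof: both assemble $\alpha\mapsto(\fx(\alpha),\fy(\alpha))$ into a morphism $k\lra\cC_2(\fn)$ via (a), use (b) to place the image of $\cO$ inside $\fC(\fx(1))$ (the paper writes $\varphi(\alpha)=g\dact(\fx(1),g^{-1}\dact\fy(\alpha))$ where you invoke $\fC(\fx(1))=\overline{\pr_1^{-1}(G\dact\fx(1))}$, which amounts to the same thing), and conclude by density of $\cO$ in $k$ and closedness of $\fC(\fx(1))$.
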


\begin{proof} In view of (a), there is a morphism
\[ \varphi : k \lra \cC_2(\fn) \ \ ; \ \ \alpha \mapsto (\fx(\alpha),\fy(\alpha)).\]
Let $\alpha \in \cO$. Then (b) provides $g \in G$ such that $\fx(\alpha) = g\dact \fx(1)$. Thus,
\[ \varphi(\alpha) = g\dact (\fx(1),g^{-1}\dact \fy(\alpha)) \in \fC(\fx(1)) \ \ \ \ \ \ \forall \ \alpha \in \cO,\]
so that 
\[ (\fx(0),\fy(0))=\varphi(0) \in \varphi(\overline{\cO}) \subseteq \overline{\varphi(\cO)} \subseteq \fC(\fx(1)),\]
as desired. \end{proof}

\bigskip 

\begin{Lem} \label{CSC3} Let $T \subseteq G$ be a maximal torus, $x \in \fn$. Suppose that $c \in \fn\cap \fg_{\alpha_0}$ (for some $\alpha_0 \in R_T$) is such that
\begin{enumerate}
\item[(a)] $\rk(\ZZ\supp(x\!+\!c))\! >\! \rk(\ZZ\supp(x))$, and
\item[(b)] $k[c,x]=[c, C_\fn(x)]$. \end{enumerate} 
Then $\fC(x)\subseteq \fC(x\!+\!c)$. \end{Lem}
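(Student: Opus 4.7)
The plan is to reduce to a fiberwise check and then apply Lemma \ref{CSC2}. Since $\fC(x+c)$ is closed and $G$-stable, it suffices to verify that $\pr_1^{-1}(x) = \{x\}\!\times\!C_\fn(x) \subseteq \fC(x+c)$; taking $G$-translates and closures then yields $\fC(x) = \overline{\pr_1^{-1}(G\dact x)} \subseteq \fC(x+c)$. So for each fixed $y \in C_\fn(x)$ the task is to produce morphisms $\fx,\fy : k \to \fn$ with $\fx(0)=x$, $\fy(0)=y$, $\fx(1)=x+c$, $[\fx(t),\fy(t)]=0$ for all $t \in k$, and $\fx(t) \in G\dact \fx(1)$ on a non-empty open subset of $k$, whereupon Lemma \ref{CSC2} delivers $(x,y) \in \fC(x+c)$.

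For the first coordinate I plan to use (a). The rank inequality forces $\alpha_0 \not\in \QQ\ZZ\supp(x)$, since otherwise $\ZZ\supp(x+c)$ would lie in $\QQ\ZZ\supp(x)$, bounding its rank by $\rk(\ZZ\supp(x))$. The duality between $X(T)_\QQ$ and $X_\ast(T)_\QQ$ then provides a cocharacter $\lambda : k^\times \to T$ with $\langle \alpha,\lambda\rangle = 0$ for every $\alpha \in \supp(x)$ and $n := \langle\alpha_0,\lambda\rangle > 0$. Setting $\fx(t):=x+tc$ and extracting $n$-th roots (available since $k$ is algebraically closed) yields $\fx(t) = \lambda(t^{1/n})\dact (x+c) \in G\dact \fx(1)$ for every $t \in k^\times$.

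For the second coordinate I plan to use (b), which furnishes a scalar $\mu \in k$ with $[c,y] = \mu[c,x]$. I will set $\fy(t) := y - \mu t\, x + \mu t(1-t)\, c$; a direct computation, using $[x,y]=0$, $[c,c]=0$, and the definition of $\mu$, shows $[\fx(t),\fy(t)] = 0$ identically in $t$, while $\fy(0)=y$. Lemma \ref{CSC2} then closes the argument. I expect the choice of $\fy$ to be the main obstacle: the naive constant $\fy(t)=y$ fails because $[c,y]$ need not vanish, the linear correction $-\mu t x$ kills the resulting linear obstruction but introduces a quadratic one proportional to $[c,x]$, and one must then add back the quadratic correction $\mu t(1-t)c$ (which brackets trivially with itself) to cancel it. The one-dimensionality of $[c,C_\fn(x)]$ granted by (b) is precisely what lets a single scalar $\mu$ capture the obstruction $[c,y]$ and makes the explicit polynomial correction succeed.
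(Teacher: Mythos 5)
Your proof is correct and follows essentially the same route as the paper: both reduce to showing $\{x\}\!\times\!C_\fn(x)\subseteq\fC(x\!+\!c)$ via Lemma \ref{CSC2} applied to $\fx(t)=x+tc$, using the torus and hypothesis (a) to see that $x+tc$ stays in $G\dact(x\!+\!c)$ for generic $t$, and using the one-dimensionality of $[c,C_\fn(x)]$ from (b) to write down an explicit commuting correction $\fy(t)$ with $\fy(0)=y$. The only cosmetic differences are that the paper gets the generic membership from Chevalley's theorem applied to the image $\alpha_0(C_T(x)^\circ)$ rather than from a cocharacter orthogonal to $\supp(x)$, and its correction $\fy$ is linear in $t$ with a shift only in the $c$-direction, whereas yours is quadratic with shifts in both the $x$- and $c$-directions; both computations close for the same reason.
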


\begin{proof} Note that
\[ x\!+\! \alpha_0(t)c = t\dact (x\!+\!c) \in G\dact (x\!+\!c) \ \ \ \ \ \ \ \ \forall \ t \in C_T(x).\]
In view of Lemma \ref{CSC1}, condition (a) ensures that $\dim C_T(x\!+\!c)^\circ\!<\!\dim C_T(x)^\circ$, so that $\dim \overline{\im \alpha_0(C_T(x)^\circ)}$ $=1$. Chevalley's Theorem 
(cf.\ \cite[(I.\S8)]{Mu}) thus provides a dense open subset $\cO \subseteq k$ such that $\cO\subseteq \alpha_0(C_T(x)^\circ)$. As a result,
\[ (\ast) \ \ \ \ \ \ \ x\!+\!\lambda c \in G\dact (x\!+\!c)  \ \ \ \ \ \text{for all} \ \lambda \in \cO.\] 
Condition (b) provides a linear form $\eta \in C_\fn(x)^\ast$ such that
\[ [y,c] = \eta(y)[x,c] \ \ \ \ \ \ \ \forall \ y \in C_\fn(x).\]
Given $y \in C_\fn(x)$, we define morphisms $\fx, \fy : k \lra \fn$ via
\[ \fx(\alpha) = x\!+\!\alpha c \ \ \ \text{and} \ \ \ \fy(\alpha) := \left\{\begin{array}{cl}  y\!+\!\eta(x)^{-1}\eta(y)\alpha c & \eta(x)\!\ne\!0 \\ y & \eta(x)\!=\!0. \end{array} \right.\]
In view of ($\ast$), we may apply Lemma \ref{CSC2} to obtain
\[ (x,y) = (\fx(0),\fy(0)) \in \fC(x\!+\!c).\]
As a result, $\{x\}\!\times\!C_\fn(x) \subseteq \fC(x\!+\!c)$, whence $\fC(x) \subseteq \fC(x\!+\!c)$. \end{proof}

\bigskip

\begin{Lem} \label{CSC4} Given $x \in \fn$, let $\fv \subseteq \fn$ be a $G$-submodule such that $G\dact x \subseteq \fv$. Then the following statements hold:
\begin{enumerate}
\item If $\fC(x) \in \Irr(\cC_2(\fn))$, then $C_\fn(x) \subseteq \fv$.
\item If $C_\fn(\fv) \not \subseteq \fv$, then $\fC(x) \not \in \Irr(\cC_2(\fn))$. \end{enumerate} \end{Lem}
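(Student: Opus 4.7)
The plan is to deduce (1) from the $\GL_2(k)$-stability of irreducible components established in Lemma \ref{Pre2}(1), and then obtain (2) as a formal consequence of (1). The proof of Lemma \ref{Pre2}(1) uses only the connectedness of $\GL_2(k)$ acting on the commuting variety, so the same statement applies verbatim with $\fg$ replaced by $\fn$; I will use it in this form.

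For (1), I would first observe that $\fv$, being a $G$-submodule of $\fn$, is a linear subspace and therefore Zariski-closed. The hypothesis $G\dact x \subseteq \fv$ then yields $\overline{G\dact x} \subseteq \fv$, and from the definition $\fC(x) = \overline{\pr_1^{-1}(G\dact x)}$ one obtains
\[ \pr_1(\fC(x)) \subseteq \overline{G\dact x} \subseteq \fv.\]
If $\fC(x)$ is an irreducible component of $\cC_2(\fn)$, Lemma \ref{Pre2}(1) ensures it is $\GL_2(k)$-stable; in particular, it is stable under the coordinate swap $(u,v)\mapsto (v,u)$. This forces $\pr_2(\fC(x)) = \pr_1(\fC(x)) \subseteq \fv$. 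Since $\{x\}\times C_\fn(x) = \pr_1^{-1}(x) \subseteq \pr_1^{-1}(G\dact x) \subseteq \fC(x)$, we conclude that $C_\fn(x) \subseteq \pr_2(\fC(x)) \subseteq \fv$, proving (1).

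For (2), the crucial point is that $x$ itself lies in $\fv$ (take $g = e$ in $G\dact x \subseteq \fv$), so every element of $C_\fn(\fv)$ centralizes $x$; that is, $C_\fn(\fv) \subseteq C_\fn(x)$. The contrapositive of (1) then gives: if $C_\fn(\fv) \not\subseteq \fv$, then $C_\fn(x) \not\subseteq \fv$, and therefore $\fC(x) \not\in \Irr(\cC_2(\fn))$.

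No genuine obstacle is anticipated; the argument is essentially a packaging of the $\GL_2(k)$-stability of irreducible components together with the tautological inclusion $\pr_1(\fC(x))\subseteq \overline{G\dact x}$. The only point requiring attention is the transfer of Lemma \ref{Pre2}(1) from $\cC_2(\fg)$ to $\cC_2(\fn)$, but this is immediate from the cited proof.
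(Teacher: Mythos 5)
Your proposal is correct and follows essentially the same route as the paper: the paper likewise uses the $\GL_2(k)$-stability (specifically the coordinate swap) of the component $\fC(x)$ to place $C_\fn(x)$ inside $\pr_1(\fC(x)) \subseteq \overline{G\dact x} \subseteq \fv$, and deduces (2) from (1) via $x \in \fv$ exactly as you do. The only cosmetic difference is that the paper writes $C_\fn(x)\!\times\!\{x\} \subseteq \fC(x)$ directly instead of passing through $\pr_2(\fC(x)) = \pr_1(\fC(x))$.
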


\begin{proof} (1) Since the component $\fC(x)$ is $\GL_2(k)$-stable, we have $C_\fn(x)\!\times\!\{x\} \subseteq \fC(x)$. Thus,
\[ C_\fn(x) \subseteq \pr_1(\fC(x)) \subseteq \overline{G\dact x} \subseteq \fv.\]
(2) Let $y \in C_\fn(\fv)\!\smallsetminus\!\fv$. Since $x \in \fv$, we have $y \in C_\fn(x)\!\smallsetminus\!\fv$, and our assertion follows from (1). \end{proof}

\bigskip

\subsection{Distinguished elements}
Let $\fg\!=\!\Lie(G)$ be the Lie algebra of a connected algebraic group $G$. In the following, we denote by $T(G)$ the maximal torus of $Z(G)$. Note that $T(G)$ is contained in any maximal 
torus $T \subseteq G$.

An element $x \in \fg$ is {\it distinguished} ({\it for $G$}), provided every torus $T \subseteq C_G(x)$ is contained in $T(G)$. If $x$ is distinguished, so is every element 
of $G\dact x$. In that case, we say that $G\dact x$ is a {\it distinguished orbit}. 

\bigskip

\begin{Lem} \label{DE1} Let $x \in \fg$. Then $x$ is distinguished if and only if $C_T(x)^\circ\!=\!T(G)$ for every maximal torus $T \subseteq G$. \end{Lem}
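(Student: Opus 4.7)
The plan is to use two basic observations about the central torus $T(G)$: (i) $T(G)$ is contained in every maximal torus of $G$, since it is a connected central subgroup and all maximal tori contain $Z(G)^\circ \supseteq T(G)$; (ii) $T(G) \subseteq Z(G)$ acts trivially on $\fg$ via $\Ad$, because differentiating the identity map of conjugation by a central element yields $\Ad(z) = \mathrm{id}_\fg$ for every $z \in Z(G)$. Together these show $T(G) \subseteq C_T(x)$ for every maximal torus $T$ and every $x \in \fg$, and since $T(G)$ is connected, in fact $T(G) \subseteq C_T(x)^\circ$.

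For the forward implication, assume $x$ is distinguished, and fix a maximal torus $T \subseteq G$. Then $C_T(x)$ is a closed subgroup of the torus $T$, so its identity component $C_T(x)^\circ$ is a subtorus of $T$, hence a torus contained in $C_G(x)$. By the distinguishedness hypothesis, $C_T(x)^\circ \subseteq T(G)$, and combined with the reverse inclusion from the preliminary observation we obtain equality.

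For the converse, assume $C_T(x)^\circ = T(G)$ for every maximal torus $T$, and let $S \subseteq C_G(x)$ be any torus. Choose a maximal torus $T$ of $G$ containing $S$ (possible since every torus of $G$ embeds in a maximal one). Then $S \subseteq T \cap C_G(x) = C_T(x)$, and connectedness of $S$ forces $S \subseteq C_T(x)^\circ = T(G)$, which shows $x$ is distinguished.

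There is no real obstacle here; the argument is essentially a bookkeeping of the standard facts that central tori sit inside maximal tori, that $Z(G)$ acts trivially under $\Ad$, and that a torus in $G$ lies in some maximal torus. The only point worth flagging is the use of $\Ad(Z(G)) = \{\mathrm{id}\}$, which is what ensures $T(G) \subseteq C_G(x)$ universally and hence makes the two conditions compare cleanly.
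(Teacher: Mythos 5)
Your proof is correct and follows essentially the same route as the paper's: both directions use that $T(G)$ lies in every maximal torus and centralizes $\fg$ (hence $T(G)\subseteq C_T(x)^\circ$), together with the extension of a torus in $C_G(x)$ to a maximal torus of $G$. You merely make explicit the step $\Ad(Z(G))=\{\id\}$ that the paper leaves implicit, which is a reasonable clarification.
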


\begin{proof}  Suppose that $x$ is distinguished. If $T \subseteq G$ is a maximal torus, then $C_T(x)^\circ \subseteq C_G(x)$ is a torus, so that $C_T(x)^\circ \subseteq T(G)$. On the other 
hand, we have $T(G)\subseteq T$, whence $T(G) \subseteq C_T(x)^\circ$.

For the reverse direction, we let $T' \subseteq C_G(x)$ be a torus. Then there is a maximal torus $T\supseteq T'$ of $G$, so that
\[ T' \subseteq C_T(x)^\circ = T(G).\]
Hence $x$ is distinguished. \end{proof}

\bigskip

\begin{Lem} \label{DE2} Let $B \subseteq G$ be a Borel subgroup with unipotent radical $U$. We write $\fb:=\Lie(B)$ and $\fu:=\Lie(U)$. 
\begin{enumerate} 
\item If $x \in \fb$ is distinguished for $G$, then it is distinguished for $B$. 
\item If $\cO \subseteq \fg$ is a distinguished $G$-orbit, then $\cO\cap\fu$ consists of distinguished elements for $B$. \end{enumerate} \end{Lem}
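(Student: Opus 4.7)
The plan is to reduce (2) to (1) and prove (1) directly from the definition of a distinguished element given at the start of Section 2.2. The one piece of structural input I will need is the containment
\[ (\ast) \qquad T(G) \subseteq T(B). \]

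To establish $(\ast)$, I would argue as follows. Being a normal torus of $G$, $T(G)$ is contained in every maximal torus of $G$. Since any maximal torus $T'$ of $B$ is also a maximal torus of $G$ --- a standard consequence of the conjugacy of Borel subgroups of $G$ together with the conjugacy of maximal tori inside the connected solvable group $B$ --- it follows that $T(G) \subseteq T' \subseteq B$. On the other hand, $T(G) \subseteq Z(G)$ commutes with every element of $B$, so $T(G) \subseteq Z(B)$. As a torus sitting in $Z(B)$, $T(G)$ must lie inside the maximal torus $T(B)$ of $Z(B)$, which is $(\ast)$.

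For part (1), let $x \in \fb$ be distinguished for $G$ and let $S \subseteq C_B(x)$ be any torus. Since $B \subseteq G$, we have $S \subseteq C_G(x)$, so the $G$-distinguishedness of $x$ forces $S \subseteq T(G)$, and $(\ast)$ then gives $S \subseteq T(B)$. Hence every torus in $C_B(x)$ sits inside $T(B)$, which says exactly that $x$ is distinguished for $B$. Part (2) is then immediate: any $x \in \cO\cap\fu$ lies in $\fu \subseteq \fb$ and is distinguished for $G$ (since $\cO$ is a distinguished $G$-orbit), so (1) yields that $x$ is distinguished for $B$.

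There is no serious obstacle in this approach. One might instead try to route the argument through Lemma \ref{DE1}, but that would require the full equality $T(G) = T(B)$, whereas the direct definition-based argument above needs only the containment $(\ast)$ and therefore avoids any additional hypothesis beyond what is already in force.
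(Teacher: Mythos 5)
Your proof is correct and takes essentially the same route as the paper's: a torus in $C_B(x)$ is a torus in $C_G(x)$, hence lies in $T(G)\subseteq T(B)$ because $x$ is distinguished for $G$, and (2) reduces immediately to (1). The only difference is in how the containment $T(G)\subseteq T(B)$ is justified --- the paper cites Springer's result $Z(G)^\circ=Z(B)^\circ$ to get the full equality $T(G)=T(B)$, whereas you derive the (sufficient) inclusion directly from $T(G)\subseteq Z(G)\cap B\subseteq Z(B)$; both are valid.
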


\begin{proof} (1) Since $B$ is a Borel subgroup, \cite[(6.2.9)]{Sp98} yields $Z(G)^\circ = Z(B)^\circ$, whence $T(G)=T(B)$.  Let $T' \subseteq C_B(x)$ be a torus. 
Since $x$ is distinguished for $G$, we obtain $T' \subseteq T(G)=T(B)$, so that $x$ is also distinguished for $B$. 

(2) This follows directly from (1). \end{proof}

\bigskip

\begin{Lem} \label{DE3} Let $G$ be a connected algebraic group with maximal torus $T$ such that $Z(G) = \bigcap_{\alpha \in R_T}\ker \alpha$.
\begin{enumerate}
\item If $x \in \fg$ is distinguished, then $\rk(\ZZ\supp(x))=\rk (\ZZ R_T)$.
\item If $(T\cap C_G(x))^\circ$ is a maximal torus of $C_G(x)$ and $\rk(\ZZ \supp(x))=\rk (\ZZ R_T)$, then $x$ is distinguished. \end{enumerate} \end{Lem}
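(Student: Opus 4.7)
The plan is to translate between the definition of distinguishedness (via Lemma \ref{DE1}) and numerical data on $\supp(x)$ (via Lemma \ref{CSC1}), using the hypothesis $Z(G) = \bigcap_{\alpha \in R_T}\ker\alpha$ to bridge the two. Indeed, this hypothesis forces $Z(G) \subseteq T$, so the identity component $Z(G)^\circ$ is a subtorus of $T$ and hence equals $T(G)$; moreover, the same character-group duality invoked in the proof of Lemma \ref{CSC1} yields
\[ \dim T(G) = \dim T - \rk(\ZZ R_T). \]
With this identity in hand, both parts of the lemma reduce to dimension counts involving $T$-centralizers.

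For (1), Lemma \ref{DE1} converts distinguishedness into the equality $C_T(x)^\circ = T(G)$, and then Lemma \ref{CSC1} gives $\dim T - \rk(\ZZ\supp(x)) = \dim C_T(x) = \dim T(G) = \dim T - \rk(\ZZ R_T)$, whence the claimed rank equality. For (2), Lemma \ref{CSC1} combined with the assumed rank equality yields $\dim C_T(x) = \dim T(G)$. Since $T(G)$ is central, $T(G) \subseteq C_T(x)$, and connectedness of $T(G)$ gives $T(G) \subseteq C_T(x)^\circ$; equality of dimensions of these connected tori then forces $C_T(x)^\circ = T(G)$.

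To conclude that $x$ is distinguished, I would take an arbitrary torus $S \subseteq C_G(x)$, embed it in a maximal torus $S' \subseteq C_G(x)$, and use the hypothesis that $C_T(x)^\circ = (T \cap C_G(x))^\circ$ is itself maximal in $C_G(x)$ together with conjugacy of maximal tori of $C_G(x)$ under $C_G(x)^\circ$ to find $g \in C_G(x)$ with $g S' g^{-1} = C_T(x)^\circ = T(G) \subseteq Z(G)$. The subtle step, and the only real obstacle in an otherwise bookkeeping argument, is the descent from $g S g^{-1} \subseteq Z(G)$ to $S \subseteq Z(G)$: if $g s g^{-1}$ commutes with every element of $G$, then $s$ commutes with every element of $g^{-1} G g = G$, so $s \in Z(G)$; finally, connectedness of $S$ gives $S \subseteq Z(G)^\circ = T(G)$, as required.
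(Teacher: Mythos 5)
Your proof is correct and follows essentially the same route as the paper: both reduce the statement to the identity $\dim C_T(x)-\dim Z(G)=\rk(\ZZ R_T)-\rk(\ZZ\supp(x))$ obtained from Lemma \ref{CSC1} together with the hypothesis $Z(G)=\bigcap_{\alpha\in R_T}\ker\alpha$, and then invoke Lemma \ref{DE1}. The only difference is that you spell out the final conjugacy argument in (2) (why $Z(G)^\circ$ being a maximal torus of $C_G(x)$ forces every torus of $C_G(x)$ into $T(G)$), which the paper leaves implicit; that step is carried out correctly.
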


\begin{proof} Let $\hat{x} \in \fg$ be an element such that $\supp(\hat{x})=R_T$. By assumption, we have $Z(G)=C_T(\hat{x})$, and Lemma \ref{CSC1} implies
\[ \dim Z(G) = \dim T\!-\!\rk(\ZZ R_T).\]
By the same token, 
\[ \dim C_T(x)\!-\!\dim Z(G) = \rk(\ZZ R_T)\!-\!\rk(\ZZ \supp(x))\]
for every $x \in \fg$.

(1) Let $x \in \fg$ be distinguished. Observing $Z(G) \subseteq T$, we have $Z(G)^\circ = C_T(x)^\circ$. Hence $\rk(\ZZ R_T)\!=\!\rk(\ZZ \supp(x))$.

(2) We put $\hat{T}:= (T\cap C_G(x))^\circ$. Since $\hat{T} \subseteq C_T(x)^\circ$, we obtain $\hat{T}=C_T(x)^\circ$. Hence $\rk(\ZZ\supp(x))=\rk (\ZZ R_T)$ yields $\hat{T}=Z(G)^\circ$, so 
that $Z(G)^\circ$ is a maximal torus of $C_G(x)$. As a result, the element $x$ is distinguished. \end{proof}

\bigskip
\noindent
Recall that the semisimple rank $\ssrk(G)$ of a reductive group $G$ coincides with the rank of its derived group $(G,G)$. 

\bigskip

\begin{Cor} \label{DE4} Let $B \subseteq G$ be a Borel subgroup of a reductive group $G$, $T \subseteq B$ be a maximal torus. If $x \in \fb$ is distinguished for $B$, then
\[ \rk(\ZZ \supp(x))= \ssrk(G).\]
\end{Cor}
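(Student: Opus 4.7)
The plan is to apply Lemma~\ref{DE1} and Lemma~\ref{CSC1} directly, reducing the claim to a dimension count involving the central torus $T(G)$. The preparatory observation I would make is that for a Borel $B$ of the connected reductive group $G$ one has $T(B)=T(G)$: the conjugation action of $T$ on $U$ has only nonzero weights (the positive roots), so no nontrivial element of $U$ is centralized by $T$, whence $C_B(T)=T$ and therefore $Z(B)\subseteq T$. In particular $Z(B)$ is diagonalizable, $Z(B)^\circ$ is a torus, and combined with the equality $Z(G)^\circ=Z(B)^\circ$ already invoked in the proof of Lemma~\ref{DE2}, this yields $T(B)=Z(B)^\circ=Z(G)^\circ=T(G)$.

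With this in hand, since $x\in\fb$ is distinguished for $B$, Lemma~\ref{DE1} applied to $B$ gives $C_T(x)^\circ=T(B)=T(G)$, and hence $\dim C_T(x)=\dim Z(G)^\circ$. Lemma~\ref{CSC1} provides $\dim C_T(x)=\dim T-\rk(\ZZ\supp(x))$, so that
\[
\rk(\ZZ\supp(x)) \ =\ \dim T - \dim Z(G)^\circ.
\]

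To conclude, I would use that for a connected reductive $G$ the maximal torus $T$ is an almost direct product $T=Z(G)^\circ\cdot(T\cap(G,G))$, the second factor being a maximal torus of the semisimple derived group and therefore of dimension $\ssrk(G)$. Passing to dimensions gives $\dim T=\dim Z(G)^\circ+\ssrk(G)$, and substitution yields the asserted equality $\rk(\ZZ\supp(x))=\ssrk(G)$.

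The only mildly delicate ingredient is the identification $T(B)=T(G)$, but it rests on standard structure theory of reductive groups that is already at play in the preceding lemmas, so I do not expect any genuine obstacle. An alternative route would be to apply Lemma~\ref{DE3}(1) directly to $B$ (whose root system relative to $T$ is $R_T^+$), for which one would verify the hypothesis $Z(B)=\bigcap_{\alpha\in R_T^+}\ker\alpha$ using the same input and then note $\rk(\ZZ R_T^+)=\rk(\ZZ R_T)=\ssrk(G)$; the approach above is slightly cleaner because it bypasses reproving the full equality of algebraic groups.
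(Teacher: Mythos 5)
Your proof is correct and follows essentially the same route as the paper: both identify $C_T(x)^\circ$ with $Z(G)^\circ$ via the distinguished hypothesis (you do this cleanly by citing Lemma~\ref{DE1} together with $T(B)=T(G)$, the paper by a direct argument with $Z(B)^\circ$) and then convert $\dim C_T(x)$ into $\rk(\ZZ\supp(x))$ via Lemma~\ref{CSC1}. The only cosmetic difference is the justification of $\dim T-\dim Z(G)^\circ=\ssrk(G)$ — you use the almost direct product $T=Z(G)^\circ\cdot(T\cap(G,G))$, whereas the paper applies Lemma~\ref{CSC1} to an element of full support together with $Z(G)=\bigcap_{\alpha\in R_T}\ker\alpha$ and $\rk(\ZZ R_T)=\ssrk(G)$; these are equivalent standard facts.
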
 

\begin{proof} Let $T \subseteq B$ be a maximal torus. Then $T$ is a maximal torus for $G$ such that  $Z(G)= \bigcap_{\alpha \in R_T} \ker\alpha$, cf.\ \cite[(\S 26, Ex.4)]{Hu81}. In view of  
\cite[(6.2.9)]{Sp98}, we have 
$\dim Z(G)^\circ = \dim Z(B)^\circ$.  Lemma \ref{CSC1} implies
\begin{eqnarray*}
\dim C_T(x)\!-\!\dim Z(B) & = & \dim C_T(x)\!-\!\dim Z(G) = \rk(\ZZ R_T)\!-\!\rk(\ZZ \supp(x)) \\
                                        &= & \ssrk(G)\!-\!\rk(\ZZ\supp(x))
\end{eqnarray*}
for every $x \in \fb$, cf.\ \cite[(II.1.6)]{Ja03}. 

Let $x \in \fb$ be distinguished for $B$. Then $Z(B)^\circ \subseteq T$ is a maximal torus of $C_B(x)$ and $Z(B)^\circ \subseteq C_T(x) \subseteq C_B(x)$. Thus, $C_T(x)^\circ=Z(B)^\circ$, and
the identity above yields $\rk(\ZZ\supp(x))=\ssrk(G)$. \end{proof}

\bigskip

\subsection{Modality} \label{S:Mod}
Let $G$ be a connected algebraic group acting on an algebraic variety $X$. Given $i \in \NN_0$, we put
\[ X_{[i]} := \{x \in X \ ; \ \dim G.x = i\}.\]
Since $X_{[i]}=\emptyset$ whenever $i\!>\!\dim X$, the set $\NN_0(X):=\{ i \in \NN_0 \ ; \ X_{[i]}\ne \emptyset\}$ is finite. 

The set $X_{[i]}$ is locally closed and $G$-stable. If $x \in X_{[i]}$, then $G.x$ is closed in $X_{[i]}$.  

\bigskip
\noindent
Suppose that $G$ acts on $X$. Then
\[ \modd (G;X):= \max_{i \in \NN_0(X)} \dim X_{[i]}\!-\!i\]
is called the {\it modality of $G$ on $X$}. 

For ease of reference, we record the following well-known fact.

\bigskip

\begin{Lem} \label{Mod1}Suppose that the connected algebraic group $G$ acts on $X$. Then $\modd(G;X)=0$ if and only if $G$ acts on $X$ with finitely many orbits. 
In this case, $X_{[i]}$ has pure dimension $i$ for every $i \in \NN_0(X)$. \end{Lem}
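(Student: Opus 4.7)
My plan is to establish both directions of the equivalence together with the pure-dimensionality claim using two facts already recorded in the surrounding text: the strata $X_{[i]}$ are locally closed and $G$-stable, and for any $x \in X_{[i]}$ the orbit $G.x$ is closed in $X_{[i]}$. Connectedness of $G$ is the other key ingredient.

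For the implication that finitely many orbits force $\modd(G;X)=0$, I would simply write $X = \cO_1 \sqcup \cdots \sqcup \cO_m$ as the disjoint union of its orbits, set $i_j := \dim \cO_j$, and observe that for every $i \in \NN_0(X)$ the stratum $X_{[i]}$ is the finite disjoint union of those $\cO_j$ with $i_j = i$. Each such $\cO_j$ is irreducible of dimension $i$ and closed in $X_{[i]}$, so $X_{[i]}$ has pure dimension $i$, and in particular $\dim X_{[i]} - i = 0$. Hence $\modd(G;X) = 0$, and the pure-dimensionality conclusion also holds.

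For the reverse implication, I would assume $\modd(G;X)=0$, so that $\dim X_{[i]} = i$ for every $i \in \NN_0(X)$, fix such an $i$, and let $Y$ be an irreducible component of $X_{[i]}$. Being locally closed in the Noetherian variety $X$, the stratum $X_{[i]}$ has only finitely many irreducible components; since $G$ is connected, the induced action on this finite set is trivial, and each $Y$ is $G$-stable. For any $x \in Y$, the orbit $G.x$ is then an irreducible (connected image of $G$) subset of $Y$, closed in $X_{[i]}$ and hence closed in $Y$, of dimension $i = \dim Y$. Irreducibility of $Y$ forces $G.x = Y$, so $Y$ is a single orbit. Two distinct components cannot share a point (else $Y = G.x = Y'$), so $X_{[i]}$ decomposes as a finite disjoint union of orbits of dimension $i$; this simultaneously yields pure dimension $i$ and, after summing over the finite set $\NN_0(X)$, finiteness of the orbit set on $X$. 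I do not foresee a genuine obstacle: the only mildly delicate point is the triviality of the $G$-action on $\Irr(X_{[i]})$, which is immediate from the connectedness of $G$, and everything else is a bookkeeping consequence of the closedness of top-dimensional orbits in their strata.
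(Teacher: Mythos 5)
Your argument is correct. The paper records this lemma without proof as a ``well-known fact,'' and your proof is the standard one, resting exactly on the two properties stated just before the lemma (the strata $X_{[i]}$ are locally closed and $G$-stable, and orbits are closed in their strata) together with the connectedness of $G$, which forces each irreducible component of $X_{[i]}$ to be $G$-stable and hence, by the dimension count $\dim Y \le \dim X_{[i]} \le i = \dim G.x$, to coincide with a single closed orbit.
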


\bigskip

\begin{Prop} \label{Mod 2}Let $G$ be a connected algebraic group with Lie algebra $\fg$ and such that $\Lie(C_G(x)) = C_\fg(x)$ for all $x \in \fg$. Then we have
\[ \dim \cC_2(\fg)= \dim G\!+\!\modd(G;\fg).\] \end{Prop}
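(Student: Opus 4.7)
The plan is to leverage Proposition \ref{Pre3}(2), which already gives $\dim \cC_2(\fg) = \dim_k \fg + \modd(\fg)$ purely in terms of the intrinsic Lie-algebraic modality attached to the rank strata $\fg_{(n)}$. It therefore suffices to show that the two modalities $\modd(\fg)$ and $\modd(G;\fg)$ differ precisely by the constant $\dim G - \dim_k \fg$.

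The key observation is that the hypothesis $\Lie(C_G(x)) = C_\fg(x)$ is exactly what synchronises the two stratifications. For any $x \in \fg$, rank-nullity gives $\rk(\ad x) = \dim_k \fg - \dim_k C_\fg(x)$, while the orbit-stabiliser theorem applied to the adjoint action of $G$ on $\fg$ yields $\dim G\dact x = \dim G - \dim C_G(x)$. Combining these with the identity $\dim_k C_\fg(x) = \dim C_G(x)$ supplied by the hypothesis, I would obtain $\dim G\dact x = \rk(\ad x) + m$ for every $x \in \fg$, where $m := \dim G - \dim_k \fg$. Pointwise, this translates into the equality of strata $\fg_{(n)} = \fg_{[n+m]}$ for every $n \in \NN_0$.

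A straightforward reindexing in the two definitions of modality then gives
\[ \modd(\fg) = \max_{n}\bigl(\dim \fg_{(n)} - n\bigr) = \max_{i}\bigl(\dim \fg_{[i]} - i\bigr) + m = \modd(G;\fg) + m, \]
where in the middle step one substitutes $i = n+m$. Plugging this back into Proposition \ref{Pre3}(2) produces $\dim \cC_2(\fg) = \dim_k \fg + \modd(G;\fg) + (\dim G - \dim_k \fg) = \dim G + \modd(G;\fg)$, as required. There is no serious obstacle: the assumption $\Lie(C_G(x)) = C_\fg(x)$ does all the work of matching Lie-algebraic with group-theoretic dimensions, after which the argument reduces to bookkeeping with two strata differing by a global index shift.
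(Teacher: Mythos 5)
Your proposal is correct and follows essentially the same route as the paper: both use the hypothesis to identify $\rk(\ad x)$ with $\dim G\dact x$ (the paper via surjectivity of the differential of the orbit map, you via orbit--stabiliser together with $\dim C_G(x)=\dim_k C_\fg(x)$), deduce the equality of strata, and then invoke Proposition \ref{Pre3}(2). Note only that your shift constant $m=\dim G\!-\!\dim_k\fg$ vanishes because $G$ is smooth, so in fact $\fg_{(n)}=\fg_{[n]}$ and no reindexing is needed.
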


\begin{proof} Given $x \in \fg$, the identity $\Lie(C_G(x)) = C_\fg(x)$ implies that the differential
\[ \fg \lra T_x(G\dact x) \ \ ; \ \ y \mapsto [y,x]\]
of the orbit map $g \mapsto g\dact x$ is surjective, cf. \cite[(2.2)]{Ja04}. In particular, $\rk(\ad x)= \dim G\dact x$, so that
\[ \fg_{(n)} = \fg_{[n]}.\]
Hence $\modd(\fg)=\modd(G;\fg)$, and our assertion follows from Proposition \ref{Pre3}(2). \end{proof}

\bigskip

\section{Springer Isomorphisms}
The technical condition of Proposition \ref{Mod 2} automatically holds in case $\Char(k)\!=\!0$. In this section, we are concerned with its verification for the unipotent radicals of Borel subgroups
for good characteristics of $G$. Throughout, we assume that $G$ is a connected reductive group. Following \cite[(2.6)]{Ja04} we say that the characteristic $\Char(k)$ is {\it good for $G$}, provided
$\Char(k)=0$ or the prime $p:= \Char(k)\!>\!0$ is a good prime for $G$, see loc.\ cit.\ for more details.  

\bigskip

\begin{Lemma} \label{SpI1} Let $G$ be semisimple with almost simple factors $G_1, \ldots, G_n$. For $i \in \{1,\ldots, n\}$, we let $B_i=U_i\!\rtimes\!T_i$ be a Borel 
subgroup of $G_i$ with unipotent radical $U_i$ and maximal torus $T_i$. Then the following statements hold:
\begin{enumerate}
\item $B\!:=\!B_1\cdots B_n$ is a Borel subgroup of $G$ with unipotent radical $U\!:=\!U_1\cdots U_n$ and maximal torus $T\!:=\! T_1\cdots T_n$.
\item The product morphism
\[ \mu_U : \prod_{i=1}^n U_i \lra U \ \ ; \ \ (u_1,\ldots, u_n) \mapsto u_1\cdot u_2\cdots u_n\]
is an isomorphism of algebraic groups. \end{enumerate}\end{Lemma}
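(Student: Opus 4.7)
The plan is to realize everything as the image of the direct product $\tilde G := G_1\times\cdots\times G_n$ under the multiplication morphism $\pi:\tilde G\lra G$. By the structure theory of semisimple groups (the $G_i$ are normal, pairwise commute, and have pairwise finite central intersections), $\pi$ is a surjective homomorphism of algebraic groups with finite central kernel; in particular $\ker\pi\subseteq Z(\tilde G)=\prod_iZ(G_i)\subseteq\prod_iT_i=:\tilde T$. I set $\tilde B:=B_1\times\cdots\times B_n$ and $\tilde U:=U_1\times\cdots\times U_n$, noting that $\tilde B$ is a Borel of $\tilde G$ with unipotent radical $\tilde U$ and maximal torus $\tilde T$.

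For part (1), the image of $\tilde B$ under the surjective morphism $\pi$ is a Borel of $G$, so $B=B_1\cdots B_n$ is a Borel. The images $U=\pi(\tilde U)=U_1\cdots U_n$ and $T=\pi(\tilde T)=T_1\cdots T_n$ are a closed connected unipotent normal subgroup of $B$, respectively a torus in $B$. Since $\ker\pi$ is finite and sits inside $\tilde T$, it meets $\tilde U$ trivially, so $\dim U=\dim\tilde U$ and $\dim T=\dim\tilde T$; combined with $\dim\tilde B=\dim\tilde U+\dim\tilde T$, this forces $U$ to be the full unipotent radical of $B$ and $T$ to be a maximal torus.

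For part (2), the restriction $\mu_U=\pi|_{\tilde U}$ is a homomorphism of algebraic groups because the $U_i$ pairwise commute (as $[G_i,G_j]=\{1\}$ for $i\ne j$), and $\ker\mu_U\subseteq\ker\pi\cap\tilde U\subseteq\tilde T\cap\tilde U=\{1\}$ shows it is bijective onto $U$. The delicate point — and, in my view, the only nontrivial step — is upgrading this bijective homomorphism to an isomorphism of algebraic varieties, since in positive characteristic a bijective homomorphism need not be an isomorphism (witness the Frobenius on $\mathbb{G}_a$).

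I would handle this by invoking the root subgroup decomposition relative to the common maximal torus $T$: since the $G_i$ pairwise commute and are almost simple, the positive root set decomposes as a disjoint union $R^+_T=\bigsqcup_iR^+_{T_i}$, and each root subgroup $U_\alpha$ with $\alpha\in R^+_{T_i}$ lies inside $U_i$. After ordering $R^+_T$ so that all roots of $G_1$ come first, then those of $G_2$, and so on, the big cell isomorphism of varieties $\prod_{\alpha\in R^+_T}U_\alpha\cong U$ factors through the natural isomorphism $\prod_{\alpha\in R^+_T}U_\alpha\cong\tilde U$ followed by $\mu_U:\tilde U\lra U$. Both flanking maps being isomorphisms forces $\mu_U$ to be an isomorphism of algebraic groups as well.
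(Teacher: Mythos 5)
Your proposal is correct, and for part (1) it follows essentially the same path as the paper (pass to $\tilde G=\prod_iG_i$, use that the image of a Borel under the surjective finite-kernel multiplication map is a Borel, then identify $U$ and $T$ inside $B=UT$); the paper is merely a bit more explicit about why $\tilde B$ is a Borel of $\tilde G$. Where you genuinely diverge is in the key step of part (2). Both of you first show $\mu_U$ is a bijective homomorphism by observing that $\ker\mu_U$ is both unipotent and contained in the (diagonalizable) center of $\tilde G$. To upgrade bijectivity to an isomorphism, the paper computes the differential $\msd(\mu_U)$, proves it is injective by letting $T$ act and using the root space decomposition of $\Lie(U)$ (so that $\msd(\mu_U)$ is bijective by dimension count), and then invokes the criterion \cite[(5.3.3)]{Sp98} that a bijective homomorphism with bijective differential is an isomorphism. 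You instead sidestep differentials and separability entirely: ordering the positive roots factor by factor, the standard isomorphism of varieties $\prod_{\alpha\in R_T^+}U_\alpha\lra U$ factors as an isomorphism onto $\tilde U$ followed by $\mu_U$, which forces $\mu_U$ to be an isomorphism. Your route trades the Lie-algebra computation for the root-subgroup parametrization of $U$ (e.g.\ \cite[(8.2.1)]{Sp98}); it is arguably cleaner in that it makes the ``bijective but inseparable'' worry moot by construction, while the paper's route is the one that directly produces the statement $\msd(\mu_U)$ is an isomorphism, which is then reused in the proof of Proposition \ref{SpI2}. Both arguments ultimately rest on the same input, namely the decomposition of the roots and root subgroups of $U$ among the almost simple factors.
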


\begin{proof} We consider the direct product $\hat{G} := \prod_{i=1}^n G_i$ along with the multiplication
\[  \mu_G : \hat{G} \lra G \ \ ; \ \ (g_1,\ldots, g_n) \mapsto g_1\cdot g_2\cdots g_n.\]
Since $(G_i,G_j)=e_k$ for $i\ne j$, it follows that $\mu_G$ is a surjective homomorphism of algebraic groups, cf.\ \cite[(27.5)]{Hu81} 

(1) We put $\hat{B}:= \prod_{i=1}^n B_i$, $\hat{U}:= \prod_{i=1}^n U_i$, and $\hat{T}:= \prod_{i=1}^n T_i$. These three subgroups of $\hat{G}$ are closed and connected. Moreover, they
are solvable, unipotent and diagonalizable, respectively. Direct computation shows that $\hat{U}$ is normal in $\hat{B}$ as well as $\hat{B}=\hat{U}\!\rtimes\! \hat{T}$.

Let $H\supseteq \hat{B}$ be a connected, closed solvable subgroup of $\hat{G}$. Since the $i$-th projection $\pr_i : \hat{G} \lra G_i$ is a homomorphism of algebraic groups for $1\!\le\!i\!\le\!n$, 
it follows that $H_i := \pr_i(H)\supseteq B_i$ is a closed, connected, solvable subgroup of $G_i$. Hence $H_i=B_i$, so that
\[ H \subseteq \prod_{i=1}^nH_i = \hat{B}.\]
As a result, $\hat{B}$ is a Borel subgroup of $\hat{G}$. In view of \cite[(21.3C)]{Hu81}, $B=\mu(\hat{B})$ is a Borel subgroup of $G$. Similarly, $T=\mu_G(\hat{T})$ is a maximal torus of $B$.
In addition, $B = \mu_G(\hat{B})=\mu_G(\hat{U}\!\rtimes\!\hat{T})=U\cdot T$. It follows that the unipotent closed normal subgroup $U=\mu_G(\hat{U}) \unlhd B$ is the unipotent radical of $B$. 

(2) According to \cite[(27.5)]{Hu81}, the product morphism
\[ \mu_G : \hat{G} \lra G\]
has a finite kernel. Since $\hat{G}$ is connected, it follows that $\ker \mu_G \subseteq Z(\hat{G})$, while $\hat{G}$ being semisimple forces $Z(\hat{G})$ to be diagonalizable, cf.\ \cite[(II.1.6)]{Ja03}.
As a result, the kernel $\ker \mu_U$ is diagonalizable and unipotent, so that $\ker \mu_U\!=\!\{1\}$. Since $\mu_U$ is surjective, map $\mu_U$ is a bijective morphism of algebraic varieties. 

Note that $\Lie(\hat{U})\!=\!\bigoplus_{i=1}^n\Lie(U_i)$ and that the differential $\msd(\mu_U) : \Lie(\hat{U}) \lra \Lie(U)$ is given by 
\[ (x_1,\ldots, x_n) \mapsto \sum_{i=1}^n x_i.\] 
Let $i\!\ne\!j$. Since $(T_i,U_j)\!=\!\{1\}$, we have $\Ad(t_i)|_{\Lie(U_j)}\!=\!\id_{\Lie(U_j)} \ \ \forall \ t_i \in T_i$. Thus, if $(x_1,\ldots, x_n) \in \ker\msd(\mu_U)$, then $\Ad(t)(x_i)\!=\!x_i$ for all $t \in T$ and 
$i \in \{1,\ldots, n\}$. Using the root space decomposition of $\Lie(U)$ relative to $T$, we conclude that $x_i\!=\!0$ for $i \in \{1,\ldots,n\}$. As a result, the map $\msd(\mu_U)$ is injective. Since $\mu_U$ is bijective, 
we have
\[ \dim_k\Lie(U) = \dim_k\Lie(\hat{U}) = \sum_{i=1}^n\dim_k\Lie(U_i)\]
so that $\msd(\mu_U)$ is an isomorphism. We may now apply \cite[(5.3.3)]{Sp98} to conclude that $\mu_U$ is an isomorphism as well. \end{proof}

\bigskip
\noindent
Let $B \subseteq G$ be a Borel subgroup with unipotent radical $U \unlhd B$. A $B$-equivariant isomorphism
\[ \varphi : U \lra \Lie(U)\]
will be referred to as a {\it Springer isomorphism for $B$}.

\bigskip
\noindent
Springer isomorphisms first appeared in \cite{Sp69} in the context of semisimple algebraic groups, providing a homeomorphism between the unipotent variety of a group and the nilpotent
variety of its Lie algebra. Our next result extends \cite[(2.2),(4.2)]{Go06} to the context of reductive groups.

\bigskip 

\begin{Proposition} \label{SpI2} Suppose that $\Char(k)$ is good for $G$. Let $B \subseteq G$ be a Borel subgroup with unipotent radical $U$ and put $\fu:= \Lie(U)$. 
\begin{enumerate}
\item There is a Springer isomorphism $\varphi : U \lra \fu$. 
\item We have $\Lie(C_U(x))=C_\fu(x)$ for every $x \in \fu$. \end{enumerate}\end{Proposition}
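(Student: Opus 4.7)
The plan is to reduce both assertions to the already-established semisimple case treated in \cite{Go06}, which is essentially a matter of absorbing the central torus.

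First I would set up the reduction. Let $G' := (G,G)$ denote the derived subgroup of $G$; since $G$ is reductive, $G'$ is semisimple, and $\Char(k)$ remains good for $G'$. Because $Z(G)^\circ \subseteq B$ is a central torus, one has $G = Z(G)^\circ \cdot G'$ and $B = Z(G)^\circ \cdot B'$, where $B' := B\cap G'$ is a Borel subgroup of $G'$. The key point is that the unipotent radical of $B'$ coincides with $U$, so $\fu = \Lie(U)$ is intrinsic to $G'$, and the $U$-action on $\fu$ via $\Ad$ is unchanged on passage between $G$ and $G'$.

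For (1), I would apply \cite[(2.2)]{Go06} to the semisimple group $G'$ to obtain a $B'$-equivariant isomorphism $\varphi : U \lra \fu$. The extension to $B$-equivariance comes for free: since $Z(G)^\circ$ is central in $G$, conjugation by $Z(G)^\circ$ acts trivially on $U$, and $\Ad(Z(G)^\circ)$ acts trivially on $\fu$. Hence $\varphi$ is automatically equivariant under $B = Z(G)^\circ \cdot B'$. For (2), the identity $\Lie(C_U(x)) = C_\fu(x)$ depends only on $U$ and its $\Ad$-action on $\fu$, both of which are intrinsic to $G'$; so the statement follows directly from \cite[(4.2)]{Go06} applied to $G'$.

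The main obstacle is ensuring that \cite[(2.2),(4.2)]{Go06} applies in the generality we require. If those results are stated only for simply-connected semisimple groups, one interposes the simply-connected cover $\pi : \tilde{G} \lra G'$. In good characteristic $\pi$ is a separable central isogeny, and its restriction $\pi|_{\tilde{U}} : \tilde{U} \lra U$ is an isomorphism: the kernel is finite and central in $\tilde{G}$, but a nontrivial central element of $\tilde{G}$ cannot lie in the unipotent radical of a Borel (it would have to centralize every root subgroup while being unipotent). Springer isomorphisms and the infinitesimal centralizer identity then transfer via $\pi|_{\tilde{U}}$ without difficulty. Everything else in the argument is purely formal verification that the action-theoretic statements are unaffected by the reductions carried out.
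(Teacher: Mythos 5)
Your overall strategy coincides with the paper's: reduce to the derived group $G'=(G,G)$, invoke Goodwin's results there, and absorb the central torus by noting it acts trivially on $U$ and $\fu$; part (2) is then quoted from \cite[(4.2)]{Go06} exactly as in the paper. However, you have elided the two points that constitute the actual technical content of the paper's argument. First, \cite{Go06} concerns \emph{simple} algebraic groups, so one cannot apply \cite[(2.2)]{Go06} to the semisimple group $G'$ directly: one must produce Springer isomorphisms $\varphi_i: U_i \to \fu_i$ for each almost simple factor $G_i$ of $G'$ and then assemble them. The paper does this via Lemma \ref{SpI1}, proving that the product map $\mu_U:\prod_i U_i \to U$ is an isomorphism of algebraic groups; your route through the simply connected cover $\tilde G \to G'$ would make the assembly step trivial (there $\tilde U$ literally is a direct product), but you should say that this is what you are doing, since your stated fallback only addresses simple connectedness, not the passage from simple to semisimple.

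Second, your justification that $\pi|_{\tilde U}:\tilde U \to U$ is an isomorphism is not adequate as written, and the claim that $\pi$ is separable in good characteristic is false: for $G'=\PGL_p(k)$ with $\Char(k)=p$ (all primes are good for type $A_{p-1}$), the isogeny $\SL_p(k)\to\PGL_p(k)$ has infinitesimal kernel $\mu_p$ and is inseparable. Triviality of the kernel on $k$-points only gives a bijective morphism, which in positive characteristic need not be an isomorphism (Frobenius). The correct argument --- the one the paper gives for $\mu_U$ --- is that the scheme-theoretic kernel is simultaneously diagonalizable (being central in a semisimple group) and unipotent, hence trivial, together with a verification that the differential $\Lie(\tilde U)\to\Lie(U)$ is bijective (e.g.\ because $\ker \msd(\pi)\subseteq \Lie(Z(\tilde G))$ meets $\Lie(\tilde U)$ trivially), after which \cite[(5.3.3)]{Sp98} applies. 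One further small point: you should take $B'=(B\cap G')^\circ$ rather than $B\cap G'$, as the intersection need not be connected; the paper checks that this identity component is indeed a Borel subgroup of $G'$ with unipotent radical $U$. With these repairs your argument becomes the paper's proof.
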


\begin{proof} (1) We first assume that $G$ is semisimple, so that $G=G_1\cdots G_n$, where $G_i \unlhd G$ is almost simple. As before, we put $\hat{G} := \prod_{i=1}^nG_i$.
Then every Borel subgroup of $\hat{G}$ is of the form $\hat{B}= \prod_{i=1}^nB_i$ for some Borel subgroups $B_i \subseteq G_i$. Hence \cite[(21.3C)]{Hu81} ensures that there exist Borel 
subgroups $B_i = U_i\!\rtimes\!T_i$ of $G_i$ such that $B=B_1\cdots B_n$ and $U=U_1\cdots U_n$. We put $\fu_i:=\Lie(U_i)$. As noted in \cite[(2.2)]{Go06}, there are Springer isomorphisms 
$\varphi_i : U_i \lra \fu_i$ for $1\!\le\!i\!\le\!n$. 

We define $\hat{B}$ and $\hat{U}$ as in the proof of Lemma \ref{SpI1} and consider the product morphisms
\[ \mu_B : \hat{B} \lra B \ \ \text{and} \ \ \mu_U : \hat{U} \lra U.\] 
Then $\Lie(\hat{U})=\bigoplus_{i=1}^n\fu_i$ and 
\[ \hat{\varphi} : \hat{U} \lra \Lie(\hat{U}) \ \ ; \ \ (u_1,\ldots,u_n) \mapsto (\varphi_1(u_1),\ldots,\varphi_n(u_n))\]
is a $\hat{B}$-equivariant isomorphism of varieties. Lemma \ref{SpI1} implies that $\mu_U: \hat{U} \lra U$ is an isomorphism of algebraic groups such that
\[ \mu_U(\hat{b}\hat{u}\hat{b}^{-1}) = \mu_B(\hat{b})\mu_U(\hat{u})\mu_B(\hat{b})^{-1} \]
for all $\hat{b} \in \hat{B}$ and $\hat{u} \in \hat{U}$. Moreover, the differential
\[\msd(\mu_U): \Lie(\hat{U}) \lra \fu\]
is an isomorphism such that
\[ \msd(\mu_U)(\Ad\hat{b}(x)) = \Ad(\mu_B(\hat{b}))(\msd (\mu_U)(x))\]
for all $\hat{b} \in \hat{B}$ and $x \in \Lie(\hat{U})$. Consequently, $\varphi := \msd(\mu_U)\circ \hat{\varphi} \circ \mu_U^{-1}$ defines an isomorphism
\[ \varphi : U \lra \fu.\]
For $b=\mu_B(\hat{b}) \in B$ and $u \in U$, we obtain, writing $b\dact x:= \Ad(b)(x)$,
\[ \varphi(bub^{-1}) = (\msd(\mu_U) \circ \hat{\varphi})(\hat{b}\mu_U^{-1}(u)\hat{b}^{-1}) = \msd(\mu_U) (\hat{b}\dact \hat{\varphi}(\mu_U^{-1}(u))) = b\dact\varphi(u),\]
as desired. 

Now let $G$ be reductive. Then $G':=(G,G)$ is semisimple, while $G=G'\cdot Z(G)^\circ$, with $Z(G)^\circ$ being a torus. Let $B \subseteq G$ be a Borel subgroup. Since 
$Z(G)^\circ \subseteq B$, we obtain $B=(B\cap G')Z(G)^\circ$, and $B$ being connected implies that $B=(B\cap G')^\circ Z(G)^\circ$. Let $B'\supseteq (B\cap G')^\circ$ be
a Borel subgroup of $G'$. Then $B'Z(G)^\circ$ is a closed, connected, solvable subgroup of $G$ containing $B$, whence $B=B'Z(G)^\circ$. As a result, $B'\subseteq B\cap G'$,
so that $B'=(B\cap G')^\circ$. 

Let $U$ be the unipotent radical of $B$. Since $Z(G)^\circ \twoheadrightarrow G/G'$ is onto, the latter group is diagonalizable, so that the canonical morphism $U \lra G/G'$ is trivial. As a result, 
$U \subseteq G'$, whence $U\subseteq (B\cap G')^\circ$. If $U'$ is the unipotent radical of $(B\cap G')^\circ$, then $B=(B\cap G')^\circ Z(G)^\circ$ implies that $U'$ is normal in $B$, whence 
$U'\subseteq U$. It follows that $U$ is the unipotent radical of the Borel subgroup $(B\cap G')^\circ$ of $G'$. The first part of the proof now provides a
$(B\cap G')^\circ$-equivariant isomorphism $\varphi : U \lra \fu$. Since $Z(G)$ acts trivially on both spaces, this map is also $B$-equivariant.

(2) In view of (1), the arguments of \cite[(4.2)]{Go06} apply. \end{proof}

\bigskip

\section{Commuting varieties of unipotent radicals}
Throughout this section, $G$ denotes a connected reductive algebraic group. If $B$ is a Borel subgroup of $G$ with unipotent radical $U$, then $B$ acts on $\fu:=\Lie(U)$ via the adjoint 
representation. Hence $B$ also acts on the commuting variety $\cC_2(\fu)$, and for every $x \in \fu$ we consider
\[ \fC(x):=\overline{B\dact (\{x\}\!\times\!C_\fu(x))}.\]
As observed earlier, we have
\[ \fC(x) = \fC(b\dact x) \ \ \ \ \ \ \ \ \  \forall \ b \in B, x \in \fu.\]

\bigskip

\subsection{The dimension formula}

\begin{Lem} \label{Df1} Let $B \subseteq G$ be a Borel subgroup with unipotent radical $U \subseteq B$, $x \in \fu:= \Lie(U)$. 
\begin{enumerate}
\item There exists a maximal torus $T \subseteq B$ such that 
\begin{enumerate}
\item $C_B(x)^\circ=C_U(x)^\circ\!\rtimes\!C_T(x)^\circ$, and
\item $\fC(x)$ is irreducible of dimension
\[ \dim \fC(x) = \dim B\!-\!\dim C_T(x)\] 
whenever $\Char(k)$ is good for $G$. \end{enumerate}
\item If $\Char(k)$ is good for $G$, then we have
\[ \dim \fC(x)=\dim B\!-\!\dim Z(G)\] 
if and only if $x$ is distinguished for $B$.
\end{enumerate} \end{Lem}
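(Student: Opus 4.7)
For part (1)(a), the plan is to exploit the structure theory of connected solvable groups applied to $C_B(x)^\circ$. First I would note that $C_B(x)^\circ$ is a connected closed subgroup of the connected solvable group $B$, hence itself connected solvable, and therefore admits a Levi decomposition. Its unipotent radical consists of its unipotent elements, and since $U$ is the set of unipotent elements of $B$, this radical coincides with $U\cap C_B(x)^\circ$, which in turn equals $C_U(x)^\circ$ (both are the largest connected unipotent subgroup of $C_B(x)^\circ$). Next I would pick a maximal torus $S\subseteq C_B(x)^\circ$ and, by conjugacy of maximal tori in $B$, find a maximal torus $T$ of $B$ containing $S$. Since $S\subseteq T$ centralizes $x$, we get $S\subseteq C_T(x)^\circ$; as $C_T(x)^\circ$ is itself a torus in $C_B(x)^\circ$ containing the maximal torus $S$, maximality forces $S=C_T(x)^\circ$. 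The Levi decomposition of $C_B(x)^\circ$ then yields (1)(a).

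For part (1)(b), the plan is a standard orbit-image dimension count. I would consider the morphism
\[
\psi : B\times C_\fu(x) \lra \cC_2(\fu), \qquad (b,y) \mapsto b\dact(x,y),
\]
whose image is $B\dact(\{x\}\!\times\!C_\fu(x))$, so that $\fC(x)=\overline{\im\psi}$. Irreducibility of the source gives irreducibility of $\fC(x)$. Analyzing the fibers: if $\psi(b,y)=\psi(b',y')$ then $c:=b^{-1}b'\in C_B(x)$ and $y'=c^{-1}\dact y$, so the fiber through $(b,y)$ is a single $C_B(x)$-coset, yielding
\[
\dim\fC(x)=\dim B+\dim C_\fu(x)-\dim C_B(x).
\]
Here the good-characteristic hypothesis enters: Proposition \ref{SpI2}(2) gives $C_\fu(x)=\Lie(C_U(x))$, hence $\dim C_\fu(x)=\dim C_U(x)$. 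Combined with (1)(a), which yields $\dim C_B(x)=\dim C_U(x)+\dim C_T(x)$, I obtain $\dim\fC(x)=\dim B-\dim C_T(x)$.

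For part (2), I would translate both sides into statements about $\dim C_T(x)$ using (1)(b). If $x$ is distinguished for $B$, Lemma \ref{DE1} forces $C_T(x)^\circ=T(B)=Z(B)^\circ$, and since $Z(B)^\circ=Z(G)^\circ$ by \cite[(6.2.9)]{Sp98}, (1)(b) gives $\dim\fC(x)=\dim B-\dim Z(G)$. Conversely, if this dimension equality holds, then $\dim C_T(x)=\dim Z(B)^\circ$; as $Z(B)^\circ\subseteq C_T(x)^\circ$ is a torus of the same dimension, the two coincide. Since $C_T(x)^\circ$ is a maximal torus of $C_B(x)^\circ$ by (1)(a) and $Z(B)^\circ$ is central in $C_B(x)^\circ$, conjugacy of maximal tori in the connected solvable group $C_B(x)^\circ$ forces \emph{every} such maximal torus to equal $Z(B)^\circ=T(B)$. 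Hence every torus in $C_B(x)$ lies in $T(B)$, i.e., $x$ is distinguished for $B$.

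The main obstacle is the step in (1)(b) that equates $\dim C_\fu(x)$ with $\dim C_U(x)$: this is where good characteristic is actually used, via the existence of a Springer isomorphism from Proposition \ref{SpI2}. Without it, $C_\fu(x)$ may strictly contain $\Lie(C_U(x))$, and the clean dimension formula $\dim\fC(x)=\dim B-\dim C_T(x)$ (together with its consequence in (2)) would fail.
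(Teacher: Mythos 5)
Your proposal is correct and follows essentially the same route as the paper: the same Levi decomposition of the connected solvable group $C_B(x)^\circ$ for (1a), the same orbit morphism $B\times C_\fu(x)\to\fC(x)$ with fibers identified with $C_B(x)$-cosets for (1b) (including the key use of Proposition \ref{SpI2}(2) to replace $\dim C_\fu(x)$ by $\dim C_U(x)$), and the same identification $C_T(x)^\circ=Z(G)^\circ=T(B)$ for (2). The only cosmetic difference is that in (2) you invoke Lemma \ref{DE1} and conjugacy of maximal tori where the paper argues directly that $Z(G)^\circ$ is the unique maximal torus of $C_B(x)^\circ$; the substance is identical.
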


\begin{proof} (1a) Let $T' \subseteq C_B(x)^\circ$ be a maximal torus, $T\supseteq T'$ be a maximal torus of $B$. We write $B=U\!\rtimes\!T$ and recall that 
$U=B_u$ is the set of unipotent elements of $B$, see \cite[(6.3.3),(6.3.5)]{Sp98}. Thus, $C_U(x)^\circ= C_B(x)^\circ_u=B_u\cap C_B(x)^\circ$ is the unipotent radical of $C_B(x)^\circ$. 

Since $T'\subseteq C_T(x)^\circ$, while the latter group is a torus of $C_B(x)^\circ$, it follows that $T'=C_T(x)^\circ$. General theory (cf.\ \cite[(6.3.3),(6.3.5)]{Sp98}) now yields
\[ C_B(x)^\circ = C_B(x)^\circ_u\!\rtimes\!T' = C_U(x)^\circ\!\rtimes\!C_T(x)^\circ,\]
as asserted.

(1b) Since $\{x\}\!\times\!C_\fu(x)$ is irreducible, so is the closure $\fC(x)$ of its $B$-saturation. Consider the dominant morphism
\[ \omega : B\!\times\!C_\fu(x) \lra  \fC(x) \ \ ; \ \ (b,y) \mapsto (b\dact x,b\dact y).\]
We fix $(b_0\dact x,b_0\dact y_0) \in \im \omega$. Then 
\[ \zeta : C_B(x) \lra  \omega^{-1}(b_0\dact x,b_0\dact y_0) \ \ ; \ \ c \mapsto (b_0c,c^{-1}\dact y_0)\]
is a morphism with inverse morphism
\[ \eta : \omega^{-1}(b_0\dact x,b_0\dact y_0) \lra C_B(x) \ \ ; \ \ (b,y) \mapsto b_0^{-1}b.\]
As a result, $\dim\omega^{-1}(b_0\dact x,b_0\dact y_0)=\dim C_B(x)$, and the fiber dimension theorem gives
\[ \dim \fC(x) = \dim B\!+\!\dim C_\fu(x)\!-\!\dim C_B(x).\]
In view of Proposition \ref{SpI2}(2), we have $\Lie(C_U(x))=C_\fu(x)$. Consequently,
\[ \dim \fC(x) = \dim B\!+\!\dim C_U(x)^\circ\!-\!\dim C_B(x)^\circ,\]
and the assertion now follows from (1a).

(2) Suppose that $\dim \fC(x)\!=\!\dim B\!-\!\dim Z(G)$. Part (1) provides a maximal torus $T \subseteq B$ such that $\dim C_T(x)\!=\!\dim Z(G)$. This readily implies $C_T(x)^\circ\!=\!Z(G)^\circ$, so that 
$C_B(x)^\circ\!=\!Z(G)^\circ\!\ltimes\!C_U(x)^\circ$. In particular, $Z(G)^\circ$ is the unique maximal torus of $C_B(x)^\circ$, so that $x$ is distinguished for $B$. 

Suppose that $x$ is distinguished for $B$. Let $T\subseteq B$ be a maximal torus such that $C_T(x)^\circ$ is a maximal torus of $C_B(x)^\circ$. It follows that $C_T(x)^\circ\! =\!Z(G)^\circ$, whence 
$\dim \fC(x)\!=\!\dim B\!-\!\dim Z(G)$. \end{proof} 

\bigskip

\begin{Thm} \label{Df2} Suppose that $\Char(k)$ is good for $G$ and let $B \subseteq G$ be a Borel subgroup of $G$, $U \subseteq B$ be its unipotent radical, $\fu:=\Lie(U)$. Then we have
\[ \dim\cC_2(\fu)=\dim B\!-\!\dim Z(G)\!+\!\modd(B;\fu).\] \end{Thm}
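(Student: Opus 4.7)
The plan is to reduce the theorem to a modality comparison. Since Proposition 3.2(2) yields $\Lie(C_U(x))=C_\fu(x)$ for every $x\in\fu$ in good characteristic, Proposition 2.3.2 applied to the adjoint action of $U$ on $\fu$ gives
\[
  \dim\cC_2(\fu)=\dim U+\modd(U;\fu).
\]
As $\dim B-\dim Z(G)=\dim U+\ssrk(G)$, the theorem is equivalent to the identity $\modd(U;\fu)=\modd(B;\fu)+\ssrk(G)$.

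To compare the two modalities I record the following observation. For each $x\in\fu$, choose a maximal torus $T\subseteq B$ as in Lemma 4.1.1(1)(a), so that $C_B(x)^\circ=C_U(x)^\circ\rtimes C_T(x)^\circ$. Combined with Proposition 3.2(2) and Lemma 2.1.1, this produces the $B$-invariant function
\[
  \rho(x):=\dim B\dact x-\dim U\dact x=\dim T-\dim C_T(x)=\rk(\ZZ\supp(x)),
\]
taking values in $\{0,\ldots,\ssrk(G)\}$, with $\rho(x)=\ssrk(G)$ precisely when $x$ is distinguished for $B$ (Corollary 2.2.4 together with Lemma 4.1.1(2)).

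The upper bound follows by stratifying $\fu_{(n)}=\{x:\dim U\dact x=n\}$ by $\rho$: the piece $\fu_{(n)}\cap\rho^{-1}(r)$ lies in $\fu_{[n+r]}$, so
\[
  \dim\fu_{(n)}-n\ \le\ \max_r\bigl[(\dim\fu_{[n+r]}-(n+r))+r\bigr]\ \le\ \modd(B;\fu)+\ssrk(G),
\]
yielding $\modd(U;\fu)\le\modd(B;\fu)+\ssrk(G)$.

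For the lower bound I would exploit the fact that the distinguished locus $\fu^d:=\{\rho=\ssrk(G)\}$ is $B$-stable, open and dense in $\fu$ — by Corollary 2.2.4, any $x$ whose support contains the simple roots lies in $\fu^d$, and such elements form an open dense subset of $\fu$. On $\fu^d$ the fibers of $\pr_1$ have dimension $\dim U-(\dim B\dact x-\ssrk(G))$; stratifying $\fu^d$ by $B$-orbit dimension then gives
\[
  \dim \pr_1^{-1}(\fu^d)=\dim U+\ssrk(G)+\modd(B;\fu^d).
\]
The remaining step is the identification $\modd(B;\fu^d)=\modd(B;\fu)$, equivalently: for some $i^*$ attaining $\modd(B;\fu)=\dim\fu_{[i^*]}-i^*$, a max-dimensional irreducible component of $\overline{\fu_{[i^*]}}$ meets $\fu^d$. \emph{This is the main obstacle}: while $\fu^d$ is open dense, the modality of a connected solvable group on an irreducible variety need not be attained on any prescribed open dense subset, so extra structural input is required — most naturally, Lemma 2.2.2(2) together with a Steinberg--Spaltenstein-type analysis of $\cO\cap\fu$ for distinguished nilpotent $G$-orbits $\cO$, whose $B$-orbits on $\fu$ exhaust the distinguished $B$-orbits and realise the full $B$-modality.
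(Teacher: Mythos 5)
Your opening reduction is exactly the paper's: Proposition 3.2(2) plus Proposition 2.3.2 give $\dim\cC_2(\fu)=\dim U+\modd(U;\fu)$, and since $\dim B-\dim Z(G)=\dim U+\ssrk(G)$, everything hinges on the identity $\modd(U;\fu)=\modd(B;\fu)+\ssrk(G)$. Your upper-bound half is essentially sound (one small caveat: the equality $\dim B\dact x-\dim U\dact x=\rk(\ZZ\supp(x))$ needs the maximal torus to be chosen adapted to $x$ as in Lemma 4.1.1(1a), since for a fixed $T$ the group $C_T(x)^\circ$ need not be a maximal torus of $C_B(x)^\circ$; but the inequality $\dim B\dact x-\dim U\dact x\le\ssrk(G)$, which is all the stratification argument uses, survives because $Z(G)^\circ$ always sits inside the adapted $C_{T}(x)^\circ$).

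The genuine gap is the one you flag yourself: the lower bound $\modd(U;\fu)\ge\modd(B;\fu)+\ssrk(G)$. Your proposed route — showing that the $B$-modality is attained on the open dense locus $\fu^d$ where $\rho=\ssrk(G)$ — is precisely the nontrivial content, and as you correctly observe, openness and density of $\fu^d$ do not imply that a maximal-defect stratum $\fu_{[i^*]}$ meets it: the relevant stratum could lie entirely inside a proper $B$-submodule such as $\fu^{(\ge 2)}$, on which every element has deficient support. The paper does not attempt to prove this identity at all; it imports it wholesale from Goodwin--Mosch--R\"ohrle \cite[Thm.~10]{GMR}, which states $\modd(U;\fu)=\modd(B;\fu)+\rk(G)$ for almost simple $G$, and then reduces the general reductive case to the almost simple one by the product decomposition of Lemma 3.1 (additivity of $\modd$ and of $\cC_2$ over the simple factors) together with the observation that $B\dact x=B'\dact x$ for the Borel $B'$ of the derived group. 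So your proof is incomplete as it stands: either you must cite \cite{GMR} at this point, or you must supply the missing structural argument, which is a substantial result in its own right and not a routine "Steinberg--Spaltenstein-type" remark.
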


\begin{proof} We first assume that $G$ is almost simple, so that $\dim Z(G)\!=\!0$. Thanks to \cite[Thm.10]{GMR}, we have
\[ \modd(U;\fu)=\modd(B;\fu)\!+\!\rk(G),\]
so that a consecutive application of Proposition \ref{SpI2} and Proposition \ref{Mod 2} implies
\[ \dim \cC_2(\fu)= \dim U\!+\!\modd(U;\fu) = \dim U\!+\!\rk(G)\!+\!\modd(B;\fu) = \dim B\!+\!\modd(B;\fu).\]
Next, we assume that $G$ is semisimple with almost simple constituents $G_1,\ldots, G_n$, say. There are Borel subgroups $B_i \subseteq G_i$ of $G_i$ with unipotent radicals 
$U_i$ such that $B\!=\!B_1\cdots B_n$ and $U\!=\!U_1\cdots U_n$. Let $\fu:=\Lie(U)$ and $\fu_i:=\Lie(U_i)$. Lemma \ref{SpI1} provides an isomorphism $U\cong \prod_{i=1}^n U_i$, 
so that $\fu = \bigoplus_{i=1}^n \fu_i$. If $x=\sum_{i=1}^nx_i \in \fu$, then $B\dact x =\prod_{i=1}^n B_i\dact x_i$, so that $\dim B\dact x = \sum_{i=1}^n \dim B_i\dact x_i$. 
This readily implies 
\[\fu_{[j]} := \{x \in \fu \ ; \ \dim B\dact x\!=\!j\} = \bigcup_{\{m \in \NN_0^n ; |m|=j\}} \prod_{i=1}^n(\fu_i)_{[m_i]} \ \ \ \ \ \ \forall \ j \in \NN_0,\] 
where we put $|m|\!:=\!\sum_{i=1}^n m_i$ for $m \in \NN_0^n$. Consequently,
\[ \dim \fu_{[j]} = \max\{ \sum_{i=1}^n \dim (\fu_i)_{[m_i]} \ ; \ m \in \NN_0^n \ , \ |m|\!=\!j \} \ \ \ \ \forall \ j \in \NN_0.\]
As a result,
\begin{eqnarray*} 
\modd(B;\fu) &=& \max_{j\ge 0} \max \{ \sum_{i=1}^n \dim (\fu_i)_{[m_i]} \ ; \ m \in \NN_0^n \ ; \ |m|\!=\!j\}\!-\!j \\
                     &= & \max_{j\ge 0} \max\{ \sum_{i=1}^n \dim (\fu_i)_{[m_i]} \!-\!m_i \ ; \ m \in \NN_0^n \ ; \ |m|\!=\!j\} \\
                     &=  & \max _{m \in \NN_0^n} \sum_{i=1}^n (\dim (\fu_i)_{[m_i]} \!-\!m_i)
                       = \sum_{i=1}^n \max_{m_i \ge 0} (\dim (\fu_i)_{[m_i]} \!-\!m_i)
                       = \sum_{i=1}^n \modd(B_i;\fu_i).
\end{eqnarray*}
Since $\cC_2(\fu) \cong \prod_{i=1}^n \cC_2(\fu_i)$, we arrive at
\[ \dim \cC_2(\fu) = \sum_{i=1}^n\dim\cC_2(\fu_i) = \sum_{i=1}^n \dim B_i\!+\!\modd(B_i;\fu_i) = \dim B\!+\!\modd(B;\fu),\]
as desired. 

If $G$ is reductive, then $G=Z(G)^\circ G'$, with $G':=(G,G)$ being semisimple and $Z(G)^\circ$ being a torus. By the arguments of Proposition \ref{SpI2}, $B':=(B\cap G')^\circ$ is a Borel subgroup 
of $G'$ with unipotent radical $U$ and such that $B=B'Z(G)^\circ$ with $Z(G)\cap B'$ being finite.  It follows that
\[ B\dact x= B'\dact x\]
for all $x \in \fu$, and the identities
\[ \dim \cC_2(\fu) = \dim B'\!+\!\modd(B';\fu)= \dim B\!-\!\dim Z(G)\!+\!\modd(B;\fu)\]
verify our claim. \end{proof}

\bigskip
\noindent
We denote by $\cO_{\rm reg} \subseteq \fg$ the regular nilpotent $G$-orbit.

\bigskip

\begin{Lem} \label{Df3} Suppose that $\Char(k)$ is good for $G$. Given $x \in \cO_\reg\cap\fu$, $\fC(x)$ is an irreducible component of $\cC_2(\fu)$ of dimension $\dim B\!-\!\dim Z(G)$. \end{Lem}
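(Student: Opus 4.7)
The plan is to combine the distinguished-element machinery of Lemmas \ref{DE2} and \ref{Df1} with the identification of the generic irreducible component of $\cC_2(\fu)$ supplied by Corollary \ref{Pre4}(1). First I show $x$ is distinguished for $B$, which pins down $\dim \fC(x)$; then I show $\fC(x)$ coincides with the unique component arising from the open stratum of maximal rank.

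First, since $x\in\cO_\reg$ one has $\dim C_G(x)=\rk(G)$. The standard structure of centralizers of regular nilpotents in good characteristic, together with Proposition \ref{SpI2}(2) identifying $\Lie(C_U(x))$ with $C_\fu(x)$, yields $C_G(x)^\circ=Z(G)^\circ\cdot C_U(x)^\circ$ with $C_U(x)^\circ$ connected unipotent of dimension $\ssrk(G)$. Consequently every torus of $C_G(x)$ lies in $T(G)=Z(G)^\circ$, so $x$ is distinguished for $G$, and Lemma \ref{DE2}(2) transfers this to $B$. Lemmas \ref{Df1}(1b) and \ref{Df1}(2) then give that $\fC(x)$ is irreducible of dimension $\dim B-\dim Z(G)$.

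Next, I identify $\fC(x)$ with the open component. From Lemma \ref{Df1}(1a) and the above one gets $\dim C_B(x)^\circ=\dim Z(G)+\ssrk(G)=\dim T$, hence $\dim B\dact x=\dim_k\fu$, so $B\dact x$ is open in $\fu$. Writing $n_0:=\max\NN_0(\fu)$, the stratum $\fu_{(n_0)}$ is open and dense in $\fu$ by lower semicontinuity of rank, so it meets the open orbit $B\dact x$; since $\Ad(B)$ preserves $\fu$, the rank of $\ad$ is constant along $B$-orbits, forcing $B\dact x\subseteq\fu_{(n_0)}$. In particular $x\in\fu_{(n_0)}$, so $(x,0)\in\fC(x)\cap\pr_1^{-1}(\fu_{(n_0)})$, a non-empty open subset of the irreducible variety $\fC(x)$. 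Hence $\fC(x)\subseteq C_0$, where $C_0:=\overline{\pr_1^{-1}(\fu_{(n_0)})}$ is the distinguished irreducible component from Corollary \ref{Pre4}(1).

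Finally, matching dimensions: from $n_0=\dim_k\fu-\dim_k C_\fu(x)=\dim_k\fu-\ssrk(G)$, Corollary \ref{Pre4}(1) gives
\[
\dim C_0=2\dim_k\fu-n_0=\dim_k\fu+\ssrk(G)=\dim B-\dim Z(G)=\dim\fC(x).
\]
Since $\fC(x)$ is irreducible, contained in $C_0$, and of the same dimension, $\fC(x)=C_0\in\Irr(\cC_2(\fu))$. The only subtle point, once Proposition \ref{SpI2}(2) is in hand, is the simultaneous use of the identities $\dim_k C_\fu(x)=\ssrk(G)$ and $\dim B\dact x=\dim_k\fu$, which together encode that $x$ is both regular (for $G$) and distinguished (for $B$); beyond this, the argument is just careful bookkeeping of the invariants $\dim B$, $\dim Z(G)$ and $\ssrk(G)$.
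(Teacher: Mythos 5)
Your argument is correct and follows essentially the same route as the paper: both identify $\fC(x)$ with the component $\overline{\pr_1^{-1}(\fu_{(\max\NN_0(\fu))})}$ of Corollary \ref{Pre4}(1) after showing that $B\dact x$ is open in $\fu$ and contained in the top rank stratum, and both obtain the dimension from distinguishedness via Lemmas \ref{DE2} and \ref{Df1}. The only cosmetic differences are that you derive the openness of the orbit from the centralizer dimension count (rather than citing it as general theory) and that you conclude $\fC(x)=\overline{\pr_1^{-1}(\fu_{(\max\NN_0(\fu))})}$ by a containment-plus-equal-dimension comparison instead of the paper's density argument.
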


\begin{proof} By general theory,  $\cO_{\rm reg}\cap\fu$ is an open $B$-orbit of $\fu$, cf.\ \cite[(5.2.3)]{Ca}. Consequently, $\cO_{\rm reg}\cap \fu_{(\max \NN_0(\fu))}$ is a non-empty subset of $\fu$. 
Since $\fu_{(\max\NN_0(\fu))}$ is a $B$-stable subset of $\fu$, it follows that $\cO_{\rm reg}\cap\fu \subseteq \fu_{(\max \NN_0(\fu))}$. 

Let $x \in \cO_{\rm reg}\cap\fu$. Then $B\dact x \subseteq \fu_{(\max\NN_0(\fu))}$ is open in $\fu$, so that $\pr_1^{-1}(B\dact x)$ is open in $\cC_2(\fu)$. Corollary \ref{Pre4} now shows that
$\pr_1^{-1}(B\dact x)$ is an open subset of the irreducible component $\overline{\pr_1^{-1}(\fu_{(\max\NN_0(\fg))})}$ of $\cC_2(\fu)$. Consequently,
\[ \fC(x) = \overline{\pr_1^{-1}(B\dact x)} =  \overline{\pr_1^{-1}(\fu_{(\max\NN_0(\fu))})}\]
is an irreducible component of $\cC_2(\fu)$. Since the element $x$ is distinguished for $G$, Lemma \ref{DE2} shows that it is also distinguished for $B$. We may now apply Lemma \ref{Df1} to see
that $\dim \fC(x)\!=\!\dim B\!-\!\dim Z(G)$. \end{proof}

\bigskip

\begin{Remarks} (1) The foregoing result in conjunction with Theorem \ref{Df2} implies that $\cC_2(\fu)$ is equidimensional only if $B$ acts on $\fu$ with finitely many orbits.

(2) It also follows from the above and Corollary \ref{Pre4} that $\max\NN_0(\fu)\!=\!\dim_k\fu\!-\!\ssrk(G)$. \end{Remarks}

\bigskip

\subsection{Minimal supports}
As before, we let $G$ be a connected reductive algebraic group, with Borel subgroup $B\! =\! U\!\rtimes\!T$. The corresponding Lie algebras will be denoted $\fg$, $\fb$ and $\fu$. Let $R_T$ the root system of $G$ 
relative to $T$, $\Delta:=\{\alpha_1,\ldots,\alpha_n\} \subseteq R_T$ be a set of simple roots. Given $\alpha = \sum_{i=1}^nm_i\alpha_i \in R_T$, we denote by $\height(\alpha)=\sum_{i=1}^nm_i$ the {\it height} of $\alpha$ (relative to $\Delta$), and put for $x \in \fu\!\smallsetminus\!\{0\}$
\[ \deg(x):= \min\{ \height(\alpha) \ ; \ \alpha \in \supp(x)\}\]
as well as
\[ \msupp (x):= \{\alpha \in \supp(x) \ ; \ \height(\alpha)=\deg(x)\}.\]
Given $n \in \NN_0$, we put
\[ \fu^{(\ge n)} := \langle \{ x \in \fu \ ; \ \deg(x) \ge n\}\rangle.\]

\bigskip

\begin{Lem} \label{Ms1} Given $x \in \fu\!\smallsetminus\!\{0\}$, we have $\deg(b\dact x) =\deg(x)$ and $\msupp(b\dact x) = \msupp(x)$ for all $b \in B$.\end{Lem}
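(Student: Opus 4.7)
The plan is to exploit the Levi decomposition $B = U \rtimes T$, write every $b \in B$ as $b = ut$ with $u \in U$ and $t \in T$, and check invariance of $\deg$ and $\msupp$ separately for each factor. For $t \in T$ this is immediate: the adjoint action reads $t \dact x_\alpha = \alpha(t) x_\alpha$ on each root vector, with $\alpha(t) \in k^\times$, so in fact $\supp(t \dact x) = \supp(x)$ and therefore $\deg$ and $\msupp$ are preserved verbatim.

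All the content lies in the case $b = u \in U$. I would pass to the filtration
\[
\fu = \fu^{(\ge 1)} \supseteq \fu^{(\ge 2)} \supseteq \cdots , \qquad \fu^{(\ge n)} = \bigoplus_{\alpha \in R_T^+,\ \height(\alpha) \ge n} \fg_\alpha,
\]
which is a chain of $B$-stable ideals of $\fu$: $T$-stability is clear from the root space decomposition, while $[\fg_\alpha, \fg_\beta] \subseteq \fg_{\alpha+\beta}$ together with the additivity of height on positive roots yields $[\fu, \fu^{(\ge n)}] \subseteq \fu^{(\ge n+1)}$. In particular, $\fu^{(\ge n)} = \{x \in \fu \ ; \ \deg(x) \ge n\} \cup \{0\}$ is recovered, matching the definition in the paper.

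The decisive point is that $U$ acts trivially on each associated graded piece $\fu^{(\ge n)}/\fu^{(\ge n+1)}$. Since $U$ is generated by its root subgroups $U_\beta$ for positive $\beta \in R_T$, and since each $\fu^{(\ge n)}$ is already $B$-stable, it suffices to show that $\Ad(u_\beta(s))(x_\alpha) - x_\alpha \in \fu^{(\ge \height(\alpha)+1)}$ for every $s \in k$ and every $x_\alpha \in \fg_\alpha$. This follows from the $T$-equivariance $t u_\beta(s) t^{-1} = u_\beta(\beta(t) s)$: the morphism $s \mapsto \Ad(u_\beta(s))(x_\alpha)$ takes value $x_\alpha$ at $s=0$, and comparing weights shows that its difference with $x_\alpha$ decomposes into root components lying in $\fg_{\alpha + k\beta}$ with $k \ge 1$, each of height strictly greater than $\height(\alpha)$.

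Granted this, fix $x \in \fu \smallsetminus \{0\}$ and set $d := \deg(x)$. The image of $x$ in $\fu^{(\ge d)}/\fu^{(\ge d+1)} \cong \bigoplus_{\height(\alpha) = d} \fg_\alpha$ is the nonzero element $\sum_{\alpha \in \msupp(x)} x_\alpha$, and the triviality of the $U$-action on this quotient forces $u \dact x$ to have the same image modulo $\fu^{(\ge d+1)}$. Reading off $\deg$ and $\msupp$ from this common image yields $\deg(u \dact x) = d$ and $\msupp(u \dact x) = \msupp(x)$. The sole genuine step is the triviality of the $U$-action on the associated graded; everything else is bookkeeping, and I expect no hypothesis on $\Char(k)$ is needed.
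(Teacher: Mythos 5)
Your proof is correct, and its overall strategy is the one the paper uses: split $b\in B=U\rtimes T$ into its toral and unipotent parts, note that $T$ merely rescales root components, and reduce the unipotent case to the assertion that $U$ acts trivially on the graded pieces $\fu^{(\ge n)}/\fu^{(\ge n+1)}$ of the height filtration. The only genuine divergence is in how that decisive congruence $\Ad(u)(x)\equiv x \ \modd\ \fu^{(\ge d+1)}$ is justified. The paper gets it from the group side: it introduces the commutator morphism $\Phi_u(v)=uvu^{-1}v^{-1}$, identifies its differential with $\Ad(u)-\id$ via \cite[(4.4.13)]{Sp98}, and then invokes the Chevalley commutator formula $\Phi_u(U_\beta)\subseteq \prod_{i,j>0}U_{i\alpha+j\beta}$ to see that the differences land in strictly higher height. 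You instead run a $T$-equivariance argument on the polynomial morphism $s\mapsto \Ad(u_\beta(s))(x_\alpha)$: conjugating by $t$ rescales $s$ by $\beta(t)$ and the output by $\alpha(t)$, which forces the coefficient of $s^m$ to lie in $\fg_{\alpha+m\beta}$, and evaluation at $s=0$ pins the $m=0$ term to $x_\alpha$. This is more self-contained (no appeal to the commutator formula) at the cost of being stated only for generators, so you must — as you do — reduce from $U$ to its root subgroups. One small presentational wrinkle: you invoke $B$-stability of $\fu^{(\ge n)}$ before proving the statement about $\Ad(u_\beta(s))$, but the Lie-ideal property $[\fu,\fu^{(\ge n)}]\subseteq \fu^{(\ge n+1)}$ does not by itself give $U$-stability in positive characteristic; in fact your weight computation is what delivers both the $U$-stability of the filtration and the triviality of the action on the associated graded, so the logic should be ordered accordingly. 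Your closing remark is also consistent with the paper: no hypothesis on $\Char(k)$ enters either argument.
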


\begin{proof} For $u \in U$ we consider the morphism
\[ \Phi_u : U \lra U \ \ ; \ \ v \mapsto  [u,v],\]
where $[u,v]:=uvu^{-1}v^{-1}$ denotes the commutator of $u$ and $v$. According to \cite[(4.4.13)]{Sp98}, we have
\[ \msd(\Phi_u)(x) = u\dact x\!-\!x \ \ \ \ \ \ \forall \ x \in \fu.\]
Given a positive root $\alpha \in R_T^+$, we consider the root subgroup $U_\alpha$ of $U$. For $u \in U_\alpha$ and $\beta \in R^+_T$, an application \cite[(8.2.3)]{Sp98} shows that
\[ \Phi_u(U_\beta) \subseteq \prod_{i,j>0} U_{i\alpha+j\beta}.\]
Let $x \in \fu\!\smallsetminus\!\{0\}$ and put $d:= \deg(x)$. Since $\fu_\beta = \Lie(U_\beta)$, the foregoing observations in conjunction with \cite[(8.2.1)]{Sp98} yield
\[ \Ad(u)(x) \equiv x \ \ \ \ \ \modd(\fu^{(\ge d+1)})   \ \ \ \ \ \ \ \ \  \forall \ u \in U.\]
Thus, $\fu^{(\ge n)}$ is a $U$-submodule of $\fu$ for all $n\!\ge\!1$ such that $U$ acts trivially on $\fu^{(\ge n)}/\fu^{(\ge n+1)}$.  

Now write $x = \sum_{\alpha \in \msupp(x)} x_\alpha\!+\!x'$, where $x' \in \fu^{(\ge d+1)}$. Given $b \in B$, there are $t \in T$ and $u \in U$ such that $b=tu$. By the above, we obtain
\[ b\dact x \equiv \sum_{\alpha \in \msupp(x)} \alpha(t)x_\alpha \ \ \ \ \ \ \ \modd(u^{(\ge d+1)}),\]
whence $\deg(b\dact x) = \deg(x)$ and $\msupp(b\dact x) = \msupp(x)$. \end{proof}

\bigskip
\noindent
Let $\cO \subseteq \fu$ be a $B$-orbit. In view of Lemma \ref{Ms1}, we may define
\[ \msupp(\cO) := \msupp(x) \ \ \ \ \ \ (x \in \cO).\]

\bigskip

\subsection{The case $\mathbf{\modd(B;\fu)\!=\!0}$}
The case where $B$ acts on $\fu$ with finitely many orbits is governed by the Theorem of Hille-R\"ohrle \cite[(1.1)]{HR}, which takes on the following form in our context:

\bigskip 

\begin{Prop} \label{fmod1} Suppose that $\Char(k)$ is good for $G$. Then $\modd(B;\fu)=0$ if and only if every almost simple constituent of $(G,G)$ is of type
$(A_n)_{n\le4}$ or $B_2$. \end{Prop}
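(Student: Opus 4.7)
The plan is to reduce the statement to the Theorem of Hille--Röhrle \cite[(1.1)]{HR} via two successive decompositions: first peeling off the central torus to reduce to $(G,G)$, and then decomposing the semisimple group into almost simple factors.

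First I would invoke Lemma \ref{Mod1} to translate $\modd(B;\fu)=0$ into the purely orbit-theoretic statement that $B$ acts on $\fu$ with only finitely many orbits. Next, write $G=Z(G)^\circ\cdot G'$, where $G':=(G,G)$ is semisimple. Arguing as in the last paragraph of the proof of Proposition \ref{SpI2}, $B'\!:=\!(B\cap G')^\circ$ is a Borel subgroup of $G'$ whose unipotent radical is again $U$, and $B=B'\cdot Z(G)^\circ$. Since $Z(G)^\circ$ acts trivially on $\fu$ via the adjoint representation, the $B$-orbits and the $B'$-orbits on $\fu$ coincide. Hence $\modd(B;\fu)=0$ iff $\modd(B';\fu)=0$, so we may assume $G$ is semisimple.

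Now write $G=G_1\cdots G_n$ with the $G_i$ almost simple. Lemma \ref{SpI1} gives Borel subgroups $B_i = U_i\!\rtimes\!T_i$ of $G_i$ with $B=B_1\cdots B_n$ and an isomorphism $U\cong \prod_i U_i$, yielding $\fu=\bigoplus_{i=1}^n\fu_i$ as $B$-modules on which $B_j$ acts trivially on $\fu_i$ for $j\ne i$. Consequently each $B$-orbit on $\fu$ splits as a product $\prod_i B_i\dact x_i$, so $B$ acts on $\fu$ with finitely many orbits if and only if each $B_i$ acts on $\fu_i$ with finitely many orbits. (This is also implicit in the formula $\modd(B;\fu)=\sum_i\modd(B_i;\fu_i)$ established during the proof of Theorem \ref{Df2}, together with the non-negativity of each summand.)

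Finally, invoke the Hille--Röhrle classification \cite[(1.1)]{HR}, which is valid in good characteristic: for an almost simple group $G_i$, the Borel $B_i$ acts on $\fu_i$ with finitely many orbits if and only if $G_i$ is of type $(A_n)_{n\le 4}$ or $B_2$. Combining this with the product decomposition proves both directions. The only potential obstacle is making sure that the Hille--Röhrle theorem is stated in the good-characteristic generality we need here; assuming that hypothesis is built into \cite{HR}, the argument is a straightforward reduction with no real technical content beyond the lemmas already established.
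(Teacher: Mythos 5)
Your proposal is correct and follows essentially the same route as the paper: the paper likewise reduces via the decomposition $B=Z(G)^\circ B_1\cdots B_n$, $\fu=\bigoplus_i\fu_i$ from the proof of Theorem \ref{Df2} to obtain $\modd(B;\fu)=\sum_i\modd(B_i;\fu_i)$ and then cites \cite[(1.1)]{HR}. Your detour through Lemma \ref{Mod1} and orbit-counting is an equivalent reformulation of the same argument, which you yourself note.
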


\begin{proof} Returning to the proof of Theorem \ref{Df2}, we let $G_1, \ldots, G_n$ be the simple constituents of $(G,G)$ and pick Borel subgroups $B_i$ of $G_i$, with unipotent
radicals $U_i$. Then 
\[ B:= Z(G)^\circ B_1\cdots B_n \]
is a Borel subgroup of $G$ with unipotent radical $U:= U_1\cdots U_n$. Setting $\fu:=\Lie(U)$ and $\fu_i:=\Lie(U_i)$, we have
\[ \modd(B;\fu) = \sum_{i=1}^n\modd(B_i;\fu_i),\]
so that \cite[(1.1)]{HR} yields the result. \end{proof} 

\bigskip

\begin{Lem} \label{fmod2} Suppose that $\modd(B;\fu)\!=\!0$. If $C \in \Irr(\cC_2(\fu))$, then there is a unique orbit $\cO_C \subseteq \pr_1(C)$ such that
\begin{enumerate}
\item[(a)] $\cO_C$ is dense and open in $\pr_1(C)$, and
\item[(b)] $C=  \fC(x)$ for all $x \in \cO_C$. \end{enumerate} \end{Lem}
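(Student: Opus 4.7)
The plan is to exploit that $\pr_1(C)$ is an irreducible, closed, $B$-stable subset of $\fu$, and that $B$ has only finitely many orbits on $\fu$, to pin down $\cO_C$ as the unique orbit of maximal dimension in $\pr_1(C)$.

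First, since $B$ is connected, it permutes $\Irr(\cC_2(\fu))$ trivially, so $C$ is $B$-stable. Hence $\pr_1(C)$ is $B$-stable, and by Lemma~\ref{Pre2}(2) it is also closed; it is irreducible as the continuous image of the irreducible variety $C$. By Lemma~\ref{Mod1} the assumption $\modd(B;\fu)=0$ guarantees that $B$ acts on $\fu$ with finitely many orbits, so $\pr_1(C)$ is a finite disjoint union of $B$-orbits $\cO_1,\ldots,\cO_r$. Since $\pr_1(C)$ is irreducible, exactly one of these orbits, say $\cO_C$, satisfies $\overline{\cO_C}=\pr_1(C)$; equivalently it is the unique orbit of dimension $\dim \pr_1(C)$. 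Any other $\cO_i$ has strictly smaller dimension, so $\overline{\cO_i}\cap \cO_C=\emptyset$ by $B$-stability of orbit closures and dimension comparison. Thus $\pr_1(C)\setminus \cO_C=\bigcup_{i\neq C}\overline{\cO_i}$ is closed, and $\cO_C$ is open and dense in $\pr_1(C)$, proving (a) and uniqueness.

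For (b), fix $x\in \cO_C$. By Proposition~\ref{Pre3}(3) applied to $C$, there is some $n_C\in\NN_0$ and an irreducible component $X_C\in \Irr(\fu_{(n_C)})$ with $C=\overline{\pr_1^{-1}(X_C)}$ and $\pr_1(C)=\overline{X_C}$. The key observation is that $\fu_{(n_C)}$ is $B$-stable (because $B$ acts by Lie algebra automorphisms, so the rank of $\ad x$ is $B$-invariant), and $B$ being connected forces it to fix every irreducible component of $\fu_{(n_C)}$; thus $X_C$ itself is $B$-stable. Since $\cO_C$ and $X_C$ are both dense in the irreducible variety $\pr_1(C)=\overline{X_C}$, they meet, and $B$-stability of $X_C$ then forces $\cO_C\subseteq X_C$.

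This gives $\pr_1^{-1}(\cO_C)\subseteq \pr_1^{-1}(X_C)\subseteq C$, so $\fC(x)=\overline{B\dact(\{x\}\!\times\!C_\fu(x))}=\overline{\pr_1^{-1}(\cO_C)}\subseteq C$. Conversely, $\pr_1^{-1}(\cO_C)$ is a non-empty open subset of $C$ (preimage under $\pr_1:C\to \pr_1(C)$ of the open set $\cO_C$), hence dense in the irreducible $C$, yielding $C\subseteq \overline{\pr_1^{-1}(\cO_C)}=\fC(x)$. The main (minor) obstacle is the $B$-stability step for $X_C$; everything else is bookkeeping with Proposition~\ref{Pre3} and Lemma~\ref{Mod1}.
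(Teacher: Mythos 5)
Your proof is correct and follows essentially the same route as the paper: part (a) is identical, and for part (b) the inclusion $C\subseteq\fC(x)$ via the dense open subset $\pr_1^{-1}(\cO_C)\cap C$ of $C$ is exactly the paper's argument. The only difference is that you additionally prove $\fC(x)\subseteq C$ directly, via the identification $C=\overline{\pr_1^{-1}(X_C)}$ from the proof of Proposition \ref{Pre3}(3) and the $B$-stability of $X_C$, whereas the paper obtains equality for free from the maximality of $C$ as an irreducible component together with the irreducibility of $\fC(x)$ --- your extra step is valid but unnecessary.
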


\begin{proof} Since the component $C$ is $B$-stable, so is the closed subset $\pr_1(C) \subseteq \fu$, cf.\ Lemma \ref{Pre2}. By assumption, $B$ thus acts with finitely many orbits on the 
irreducible variety $\pr_1(C)$. Hence there is a $B$-orbit $\cO_C \subseteq \pr_1(C)$ such that $\overline{\cO}_C = \pr_1(C)$. Consequently, $\cO_C$ is open in $\pr_1(C)$. The unicity of 
$\cO_C$ follows from the irreducibility of $\pr_1(C)$.

Let $x \in \cO_C$, so that $\cO_C=B\dact x$. Then there is $y \in \fu$ such that $(x,y) \in C$. In particular, $y \in C_\fu(x)$, so that $(x,y) \in B\dact (\{x\}\!\times\!C_\fu(x))= \pr_1^{-1}(\cO_C)$.
Thanks to (a), $\pr^{-1}(\cO_C)$ is open in $\pr_1^{-1}(\pr_1(C))$. It follows that $(B\dact (\{x\}\!\times\!C_\fu(x)))\cap C$ is a non-empty open subset of $C$, so that
\[ C = \overline{(B\dact (\{x\}\!\times\!C_\fu(x)))\cap C} \subseteq \fC(x).\]
Since the latter set is irreducible, while $C$ is a component, we have equality. \end{proof}

\bigskip

\begin{Remarks} (1) \ The Lemma holds more generally for each $C \in \Irr(\cC_2(\fu))$ with $\modd(B;\pr_1(C))\!=\!0$.

(2) \ Suppose that $\modd(B;\fu)\!=\!0$. In view of Theorem \ref{Df2} and Lemma \ref{Df1}, each distinguished $B$-orbit $B\dact x$ gives rise to an irreducible component $\fC(x)$ of maximal dimension. \end{Remarks}

\bigskip
\noindent
Suppose that $\modd(B;\fu)\!=\!0$. Using Lemma \ref{fmod2} we define
\[ \msupp(C) = \msupp(\cO_C)\]
for every $C \in \Irr(\cC_2(\fu))$. 

\bigskip

\section{Almost simple groups}\label{S:Asg}
The purpose of this technical section section is the proof of the following result, which extends \cite[\S3]{GR} to good characteristics. 

\bigskip

\begin{Proposition} \label{Asg1} The following statements hold:
\begin{enumerate}
\item If $G$ has type $(A_n)_{n\le 4}$, then $\cC_2(\fu)$ is equidimensional and 
\[ |\Irr(\cC_2(\fu))| = \left\{ \begin{array}{cc} 5 & n\!=\!4 \\ 2 & n\!=\!3 \\ 1 & \text{else.} \end{array} \right.\]
\item If $\Char(k)\!\ne\!2$ and $G$ has type $B_2$, then $\cC_2(\fu)$ is equidimensional and $|\Irr(\cC_2(\fu))|=2$. \end{enumerate} \end{Proposition}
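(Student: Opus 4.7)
Since $G$ is almost simple, $\dim Z(G)=0$, and Proposition \ref{fmod1} yields $\modd(B;\fu)=0$ in each case. Theorem \ref{Df2} therefore gives $\dim \cC_2(\fu)=\dim B$, while Lemma \ref{fmod2} tells us that every $C\in\Irr(\cC_2(\fu))$ has the form $C=\fC(\cO_C)$ for a unique $B$-orbit $\cO_C$. Combining Lemma \ref{Df1} with Corollary \ref{DE4}, the equality $\dim \fC(x)=\dim B$ is equivalent to $x$ being distinguished for $B$, equivalently to $\rk(\ZZ\supp(x))=\ssrk(G)$. The strategy is to establish a bijection between $\Irr(\cC_2(\fu))$ and the set of distinguished $B$-orbits in $\fu$: this delivers equidimensionality, while the numerical claim reduces to an orbit count.

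One direction is easy: if $\cO$ is a distinguished $B$-orbit, then $\fC(\cO)$ is closed, irreducible, and of maximal dimension $\dim\cC_2(\fu)$, hence a component; and distinct distinguished orbits give distinct components by the uniqueness clause of Lemma \ref{fmod2}. The substantive direction asserts that $\fC(x)$ fails to be a component when $x$ is not distinguished. The plan is to invoke Lemma \ref{CSC3}: since $\rk(\ZZ\supp(x))<\ssrk(G)=\rk(\ZZ R_T)$, one may choose a positive root $\alpha_0\notin\ZZ\supp(x)$, and then produce a nonzero $c\in\fu_{\alpha_0}$ satisfying the linearity condition $k[c,x]=[c,C_\fu(x)]$. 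Condition (a) of Lemma \ref{CSC3} is automatic from the choice of $\alpha_0$, and one obtains $\fC(x)\subseteq\fC(x+c)$; iterating raises $\rk(\ZZ\supp(\cdot))$ strictly until a distinguished orbit is reached, contradicting $\fC(x)\in\Irr(\cC_2(\fu))$.

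The main obstacle is producing $c$ case by case. For each non-distinguished $B$-orbit representative in types $(A_n)_{n\le 4}$ and $B_2$, one must exhibit $\alpha_0$ and a root vector $c\in\fu_{\alpha_0}$ such that $[y,c]\in k[x,c]$ for every $y\in C_\fu(x)$. The explicit Hille--R\"ohrle classification of $B$-orbits together with the small rank of the root systems in question make this a finite verification, paralleling \cite[\S3]{GR}; the good-characteristic passage is enabled by the Springer isomorphism of Proposition \ref{SpI2}, which underlies the correspondence between $B$-orbits on $U$ and on $\fu$. The final enumeration of distinguished $B$-orbits then yields the counts $1,1,2,5$ for $A_1,\ldots,A_4$ and $2$ for $B_2$.
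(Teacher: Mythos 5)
Your global framework coincides with the paper's: reduce to $\SL_{n+1}$ and $\Sp(4)$, use Lemma \ref{fmod2} to attach to each component a unique dense $B$-orbit, observe via Lemma \ref{Df1} and Theorem \ref{Df2} that every distinguished orbit yields a component of maximal dimension and that distinct distinguished orbits yield distinct components, and then eliminate the non-distinguished orbits. The problem lies entirely in the engine you propose for the elimination step, which is where all the work in the paper actually happens.

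First, Lemma \ref{CSC3} is not the universal tool you take it to be: its condition (b), $k[c,x]=[c,C_\fu(x)]$, is a genuine restriction that fails for many of the orbits that must be discarded, and there is no reason a root vector $c$ satisfying it should exist for a given non-distinguished $x$. The paper invokes \ref{CSC3} only a few times (e.g.\ $\fC(e_{24})\subseteq\fC(e_{23})$, $\fC(x_\beta)\subseteq\fC(x_\alpha+x_\beta)$); elsewhere it must use Lemma \ref{CSC4} with a $B$-submodule $\fv\supseteq B\dact x$ such that $C_\fu(x)\not\subseteq\fv$ (which shows $\fC(x)$ is not a component without exhibiting a larger $\fC(x')$), and hand-built curves via Lemma \ref{CSC2} whose second coordinate carries correction terms in \emph{several} root spaces precisely because no single $c$ linearizes $\ad c$ on $C_\fu(x)$. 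Concretely, for $x=e_{31}=E_{2,3}+E_{1,4}$ in $\SL_5$ one computes $[E_{3,5},C_\fu(e_{31})]=kE_{2,5}\oplus kE_{1,5}$, which is two-dimensional, so the degeneration to $e_{29}$ cannot be run through \ref{CSC3}; the paper needs the curve $\fy(\alpha)=y+b\alpha E_{4,5}+a\alpha E_{3,5}$. Second, your termination criterion identifies ``distinguished'' with ``$\rk(\ZZ\supp(x))=\ssrk(G)$''. Corollary \ref{DE4} gives only one implication, and the converse is false: in $\SL_3$ the element $E_{1,2}+E_{1,3}\in B\dact E_{1,2}$ has full support rank but is not distinguished. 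Indeed $\rk(\ZZ\supp(\cdot))$ is not an orbit invariant, so an application of \ref{CSC3} need not move you to a new orbit at all (take $x=E_{1,2}$, $c=E_{1,3}$: both conditions hold and the conclusion is $\fC(x)\subseteq\fC(x)$), and your iteration can both stall and terminate at a non-distinguished element of full support rank. The paper accordingly verifies distinguishedness of the surviving representatives ($e_1$, $e_3$, $e_9$, \dots) by separate centralizer-dimension computations resting on Proposition \ref{SpI2}, and organizes the casework with the extra invariants $\msupp$ and the involution $\Upsilon$, as well as by deducing the $A_3$ case from $A_4$ via the projection $\GL_5\to\GL_4$. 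Your proposal, as written, would not survive the finite verification it defers to.
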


\bigskip
\noindent
For $G$ as above, the Borel subgroup $B \subseteq G$ acts on $\fu$ with finitely many orbits. We let $\fR \subseteq \fu$ be a set of orbit representatives, so that
\[ \cC_2(\fu) = \bigcup_{ x \in \fR} \fC(x)\]
is a finite union of closed irreducible subsets. We will determine in each case the set $\{ x \in \fR \ ; \ \fC(x) \in \Irr(\cC_2(\fu))\}$. A list of orbit representatives is given in \cite[\S 3]{GR} and
we will follow the notation established there.

\bigskip

\subsection{Special linear groups}\label{S:SL}
Let $G=\SL_{n+1}(k)$ and $\fg = \fsl_{n+1}(k)$, where $1\!\le\!n\!\le\!4$. Moreover, $B,T,U$ denote the standard subgroups of upper triangular, diagonal, and upper unitriangular
matrices, respectively. 

For $i\!\le\!j \in \{1,\ldots, n\!+\!1\}$, we let $E_{i,j}$ be the $(i,j)$-elementary matrix, so that 
\[ \fu:= \bigoplus_{i<j}kE_{i,j}\]
is the Lie algebra of the unipotent radical $U$ of $B$. We denote the set of simple roots by  $\Delta :=\{\alpha_1,\ldots,\alpha_n\}$. 
Let $i\!<\!j\!\le\! n\!+\!1$. Then $E_{i,j}$ is the root vector corresponding to the root $\alpha_{i,j}:=\sum_{\ell=i}^{j-1}\alpha_\ell$. We therefore have $\alpha_i=\alpha_{i,i+1}$
for $1\!\le\!i\!\le \!n$  and
\[ R_T^+ := \{ \alpha_{i,j} \ ; \ 1\!\le\!i\!<\!j\!\le\!n\!+\!1\}\]
is the set of roots of $\fu$ relative to $T$. (The set of positive roots of $\fsl_{n+1}(k)$.) 

Recall that 
\[ E_{i,j}E_{r,s} = \delta_{j,r}E_{i,s},\]
as well as
\[ [E_{i,j},E_{r,s}] = \delta_{j,r}E_{i,s}\!-\!\delta_{s,i}E_{r,j} \ \ \ \ \ \ \text{for all} \ i,j,r,s \in \{1,\ldots,n\!+\!1\}. \]
Let $\alpha = \alpha_{i,j}$ be a positive root. Then
\[ U_\alpha := \{ 1\!+\! aE_{i,j} \ ; \ a \in k\}\]
is the corresponding root subgroup of $U$, and the formula above implies
\[ \Ad(1\!+\! aE_{i,j})(x) = (1\!+\! aE_{i,j})x(1\!-\! aE_{i,j}) = x\!+\!a[E_{i,j},x]\]
for all $x \in \fu$. 

Note that $A:=\{(a_{ij}) \in \Mat_{n+1}(k) \ ; \ a_{ij}=0 \ \text{for} \ i\!>\!j\}$ is a subalgebra of the associative algebra $\Mat_{n+1}(k)$. We consider the linear map
\[ \zeta : A \lra A \ \ ; \ \ E_{i,j} \mapsto E_{n+2-j,n+2-i}.\]
Then we have 
\begin{enumerate}
\item[(a)] $\zeta(ab) = \zeta(b)\zeta(a)$ for all $a,b \in A$, and 
\item[(b)] $\det(\zeta(a)) = \det(a)$ for all $a \in A$. \end{enumerate}
There results a homomorphism
\[ \tau : B \lra B \ \ ; \ \ a \mapsto \zeta(a)^{-1}\]
of algebraic groups such that $\tau(U)=U$. We write $\fb:=\Lie(B)$ and put $\Upsilon := \msd(\tau)|_\fu$. As $\zeta$ is linear, \cite[(4.4.12)]{Sp98} implies that
\[\Upsilon(E_{i,j}) = -E_{n+2-j,n+2-i} \ \ \ \ \ \ 1\!\le\!i\!<\!j\!\le\!n\!+\!1.\]
Thus, $\Upsilon$ is an automorphism of $\fu$ of order $2$ such that
\[ \Upsilon(\fu_{\alpha_{ij}})= \fu_{\alpha_{n+2-j,n+2-i}}.\]
Since $\Delta$ is a basis for the root lattice $\ZZ R_T^+ = \ZZ R_T$, there is an automorphism  $\sigma : \ZZ R_T^+ \lra \ZZ R_T^+$ of order $2$ such that
\[\sigma(\alpha_i) = \alpha_{n+1-i} \ \ \ \ \ \ \ 1\!\le\!i\!\le\!n.\]
Thus, $\sigma(R_T^+) = R_T^+$, and 
\[ \Upsilon(\fu_\alpha)= \fu_{\sigma(\alpha)} \ \ \ \ \ \ \forall \ \alpha \in R_T^+.\]
We denote by $(\fu^n)_{n \in \NN}$ the descending series of the nilpotent Lie algebra $\fu$, which is inductively defined via $\fu^1:=\fu$ and $\fu^{n+1} := [\fu,\fu^n]$.
Note that $\fu^n = \fu^{(\ge n)}$ for all $n\!\ge\!1$.

\bigskip

\begin{Lem} \label{SL1} Let $C \in \Irr(\cC_2(\fu))$. Then we have
\[ \msupp([\Upsilon\!\times\!\Upsilon](C))=\sigma(\msupp(C)).\]
\end{Lem}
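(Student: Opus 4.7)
The plan is to reduce the statement to the analogous equivariance for a single element, by passing to the unique dense orbit $\cO_C$ in $\pr_1(C)$ supplied by Lemma \ref{fmod2}. Since $\Upsilon\!\times\!\Upsilon$ is an automorphism of the affine space $\fu\!\times\!\fu$ that preserves $\cC_2(\fu)$ (because $\Upsilon$ is a Lie algebra automorphism), it permutes the irreducible components; so $C':=[\Upsilon\!\times\!\Upsilon](C) \in \Irr(\cC_2(\fu))$, and $\pr_1(C')=\Upsilon(\pr_1(C))$.

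Next I would record two elementary compatibilities. First, the differential formula $\Upsilon=\msd(\tau)|_\fu$ combined with the identity $\Ad(\tau(b))\circ\msd(\tau)=\msd(\tau)\circ\Ad(b)$ (for $b \in B$) shows that $\Upsilon$ is $\tau$-equivariant, and in particular sends $B$-orbits on $\fu$ to $B$-orbits. Hence $\Upsilon(\cO_C)$ is a $B$-orbit, which is dense and open inside $\pr_1(C')=\Upsilon(\pr_1(C))$. By the uniqueness clause of Lemma \ref{fmod2}, this forces
\[\cO_{C'}=\Upsilon(\cO_C).\]

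Second, since $\sigma$ permutes the simple roots $\{\alpha_1,\ldots,\alpha_n\}$, it preserves the height function on $R_T^+$. Combined with $\Upsilon(\fu_\alpha)=\fu_{\sigma(\alpha)}$, this gives, for any $x=\sum_{\alpha \in R_T^+}x_\alpha \in \fu\!\smallsetminus\!\{0\}$,
\[\supp(\Upsilon(x))=\sigma(\supp(x)),\qquad \deg(\Upsilon(x))=\deg(x),\]
and therefore $\msupp(\Upsilon(x))=\sigma(\msupp(x))$.

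Putting the two ingredients together, I would pick any $x \in \cO_C$ and compute, using the definition $\msupp(C)=\msupp(\cO_C)=\msupp(x)$:
\[\msupp(C')=\msupp(\cO_{C'})=\msupp(\Upsilon(\cO_C))=\msupp(\Upsilon(x))=\sigma(\msupp(x))=\sigma(\msupp(C)),\]
which is the desired identity. The only nontrivial point is really the observation that $\sigma$ preserves heights (because it merely permutes the simple roots); everything else is an application of Lemma \ref{fmod2} and the $\tau$-equivariance of $\Upsilon$, so there is no serious obstacle.
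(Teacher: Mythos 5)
Your proposal is correct and follows essentially the same route as the paper: both use the $\tau$-equivariance $\Upsilon(b\dact x)=\tau(b)\dact\Upsilon(x)$ to identify $\cO_{[\Upsilon\times\Upsilon](C)}=\Upsilon(\cO_C)$, and then the relation $\Upsilon(\fu_\alpha)=\fu_{\sigma(\alpha)}$ together with the height-preservation of $\sigma$ (which the paper encodes by computing modulo $\fu^{(\ge d+1)}$) to conclude $\msupp(\Upsilon(x))=\sigma(\msupp(x))$. No gaps.
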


\begin{proof} We put $\cO_C = B\dact x$. In view of  $\Upsilon = \msd(\tau)|_\fu$, we have
\[ \Upsilon(b\dact x) = \tau(b)\dact \Upsilon(x) \  \  \  \  \   \  \  \forall \ b \in B, x \in \fu.\]
Consequently,
\[ \Upsilon(\cO_C) = \Upsilon(B\dact x) = B\dact \Upsilon(x)\]
is an open orbit of $\Upsilon(\pr_1(C)) = \pr_1([\Upsilon\!\times\!\Upsilon](C))$, so that
\[ \cO_{[\Upsilon\!\times\!\Upsilon](C)} = \Upsilon(\cO_C).\]
Setting $d\!:=\!\deg(x)$, we have
\[ x \equiv \sum_{\alpha \in \msupp(x)} x_\alpha \ \ \ \ \ \ \modd \fu^{(\ge d+1)}.\]
Thus, 
\[ \Upsilon(x) \equiv  \sum_{\alpha \in \msupp(x)} -x_{\sigma(\alpha)}  \ \ \ \ \ \modd \fu^{(\ge d+1)},\]
whence
\[\msupp([\Upsilon\!\times\!\Upsilon](C)) = \msupp(\Upsilon(x)) = \sigma(\msupp(x)) = \sigma(\msupp(C)),\]
as desired. \end{proof} 

\bigskip

\begin{Remark} The list of orbit representatives for the case $A_4$ given in \cite[(3.4)]{GR} contains some typographical errors, which we correct
as follows:
\begin{enumerate} 
\item[(a)] In the form stated loc.\ cit., the element $e_3$ satisfies $\rk(\ZZ\supp(e_3))\!=\!3$, so that it is not distinguished, see Corollary \ref{DE4}.
We write $e_3 = 11010{\bf 1} 0000$, so that $e_3 = \Upsilon(e_7)$.  
\item[(b)] In \cite[(3.4)]{GR}, we have $e_4\!=\!e_5$. We put $e_4:=1101000000$ (the element, $e_3$ of \cite[(3.4)]{GR}), so that $e_4=\Upsilon(e_8)$.
\end{enumerate} \end{Remark} 

\bigskip

\begin{Lem} \label{SL2} Suppose that $\Char(k)\!\ne\!2$. Let $G\!=\!\SL_5(k)$. Then $\cC_2(\fu)$ is equidimensional and $|\Irr(\cC_2(\fu))|\!=\!5$. \end{Lem}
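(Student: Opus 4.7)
My plan is to use the list of $B$-orbit representatives in $\fu$ for type $A_4$ recorded in [GR, (3.4)], with the corrections indicated in the preceding Remark, and filter it through the dimension and component machinery of the previous sections. Since $\modd(B;\fu)=0$ for type $A_n$ with $n\le 4$ by Proposition \ref{fmod1}, and $\dim Z(\SL_5)=0$, Theorem \ref{Df2} yields $\dim\cC_2(\fu)=\dim B=14$. Moreover, Lemma \ref{fmod2} guarantees that each $C\in\Irr(\cC_2(\fu))$ has the form $C=\fC(e)$ for some orbit representative $e$, and Lemma \ref{Df1}(2) together with Corollary \ref{DE4} shows that $\dim\fC(e)=14$ precisely when $e$ is distinguished for $B$, i.e.\ when $\rk(\ZZ\supp(e))=\ssrk(G)=4$. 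Consequently, equidimensionality reduces to showing that $\fC(e_i)$ fails to be a component whenever $e_i$ is non-distinguished, while the count of components reduces to identifying the distinguished orbits whose $\fC$ is maximal in the containment order.

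For the equidimensionality step I would apply Lemma \ref{CSC3} repeatedly. Given a non-distinguished $e_i$, a positive root $\alpha_0\notin\ZZ\supp(e_i)$ exists because $\rk(\ZZ\supp(e_i))<4$, and I would choose a root vector $c\in\fu_{\alpha_0}$ so that $[c,C_\fu(e_i)]=k[c,e_i]$. The centralizer $C_\fu(e_i)$ is computed directly from the matrix form of $e_i$ via $[E_{i,j},E_{r,s}]=\delta_{j,r}E_{i,s}-\delta_{s,i}E_{r,j}$, and then $c$ is chosen so that its bracket against each generator of $C_\fu(e_i)$ lies in $k[c,e_i]$. Lemma \ref{CSC3} then gives $\fC(e_i)\subseteq\fC(e_i+c)$, and iterating reaches a distinguished $e_j$ with $\fC(e_i)\subseteq\fC(e_j)$; strict containment follows from $\dim\fC(e_i)<14=\dim\fC(e_j)$. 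The involution $\Upsilon$ introduced above and Lemma \ref{SL1} halve the work, since any containment $\fC(e_i)\subsetneq\fC(e_j)$ is preserved by $\Upsilon\!\times\!\Upsilon$, so only one representative from each $\langle\Upsilon\rangle$-orbit needs to be analyzed explicitly.

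For the counting step, distinct $B$-orbit representatives $e_i, e_j$ with $\fC(e_i)=\fC(e_j)$ would force $B\dact e_i=B\dact e_j$ by the uniqueness of the open $\pr_1$-orbit in Lemma \ref{fmod2}(a); hence distinct distinguished representatives give distinct $\fC$'s. I would then rule out containments between distinct distinguished $\fC(e_j)$'s using the height invariant $\msupp$ from Lemma \ref{Ms1}: a containment $\fC(e_i)\subseteq\fC(e_j)$ forces $\msupp(\pr_1(\fC(e_i)))$ to be dominated by $\msupp(\pr_1(\fC(e_j)))$ in the height partial order, which is an easy combinatorial check against the list. The regular orbit $\cO_\reg\cap\fu$ contributes one maximal $\fC$ automatically by Lemma \ref{Df3}, and matching the remaining distinguished orbits against this criterion should leave exactly five maximal $\fC$'s.

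The main obstacle is the case analysis itself: [GR, (3.4)] lists more than a dozen $B$-orbit representatives in type $A_4$, and for each non-distinguished one a specific root vector $c$ witnessing Lemma \ref{CSC3} has to be exhibited and its bracket identity verified in detail. The hypothesis $\Char(k)\ne 2$ is expected to enter precisely at the commutator computations where a factor of $2$ arises in an $\fsl_2$-triple internal to $\fu$, and care will be needed at those orbit representatives where such a factor would otherwise obstruct the centralizer condition $[c,C_\fu(e_i)]=k[c,e_i]$.
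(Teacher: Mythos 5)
Your global framing agrees with the paper's: reduce to the finite orbit list of \cite[(3.4)]{GR} (with the corrections of the preceding Remark), use Theorem \ref{Df2} and Lemma \ref{fmod2} to see that every component is some $\fC(e_i)$ of dimension $\le 14=\dim B$, identify the distinguished orbits as the components, and exploit $\Upsilon$ and Lemma \ref{SL1} to halve the case analysis. There are, however, two genuine gaps. The smaller one: you treat ``$e$ is distinguished for $B$'' and ``$\rk(\ZZ\supp(e))=4$'' as equivalent, but Corollary \ref{DE4} gives only the implication distinguished $\Rightarrow$ full rank; the converse requires the extra hypothesis of Lemma \ref{DE3}(2), which is not automatic because $\supp$ is taken relative to the fixed diagonal torus while the torus adapted to $e$ as in Lemma \ref{Df1}(1a) need not be that one. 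This is why the paper verifies distinguishedness of $e_9$ (and $e_3$) separately, by computing $\dim_k(\ad e_9)(\fb)=\dim_k(\ad e_9)(\fu)+4$ and invoking Proposition \ref{SpI2} to conclude $C_B(e_9)^\circ=C_U(e_9)^\circ$.

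The larger gap is that Lemma \ref{CSC3} cannot serve as the sole engine for eliminating non-distinguished orbits: its hypothesis (b) is a strong linear condition that simply fails for several representatives. A concrete instance is $e_{47}=E_{1,3}+E_{2,4}+E_{3,5}$ (non-distinguished, since $\rk(\ZZ\supp(e_{47}))=3$). The only roots $\alpha_0$ with $\alpha_0\notin\ZZ\supp(e_{47})$, as required by hypothesis (a), are $\alpha_1,\alpha_2,\alpha_3,\alpha_4,\alpha_{1,4},\alpha_{2,5}$; and $C_\fu(e_{47})$ contains $E_{2,4}$, $E_{2,5}$, $E_{1,2}+E_{3,4}$ and $E_{2,3}+E_{4,5}$, so for instance $[E_{1,2},C_\fu(e_{47})]\supseteq kE_{1,4}+kE_{1,5}$ is two-dimensional while $k[E_{1,2},e_{47}]=kE_{1,4}$, and $[E_{1,4},E_{2,3}+E_{4,5}]=E_{1,5}\ne 0=[E_{1,4},e_{47}]$; one checks similarly that (b) fails for all six admissible root vectors. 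The paper disposes of $e_{47}$ (and of most other orbits) not by Lemma \ref{CSC3} but by Lemma \ref{CSC4}, exhibiting a $B$-submodule $\fv\supseteq B\dact e_i$ with $C_\fu(e_i)\not\subseteq\fv$ (here $\fv=\fu^2$ and $E_{1,2}+E_{3,4}\in C_\fu(e_{47})\smallsetminus\fu^2$), and by Lemma \ref{CSC2} with hand-built curves carrying nontrivial correction terms (for $e_{31}$, $e_{13}$, and the family $e_{10},e_{11},e_{12}$); Lemma \ref{CSC3} is used only twice. Finally, your ``height partial order'' criterion for excluding containments among the surviving $\fC(e_j)$'s is not supported by any lemma in the paper, but it is also unnecessary: once the five representatives are known to be distinguished, each $\fC(e_j)$ has dimension $14=\dim\cC_2(\fu)$ and they have distinct dense $\pr_1$-orbits by Lemma \ref{fmod2}, so no proper containments can occur.
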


\begin{proof} Let $C \in \Irr(\cC_2(\fu))$ be a component and pick $x \in \cO_C$, so that $C=\fC(x)$, cf.\ Lemma \ref{fmod2}. We consider
\[ S_C := \msupp(C)\cup\msupp([\Upsilon\!\times\!\Upsilon](C)) = \msupp(x)\cup\msupp(\Upsilon(x)).\]
According to Lemma \ref{SL1}, $S_C$ is a $\sigma$-stable subset of $R_T^+$.

We will repeatedly apply Lemma \ref{CSC4} to $B$-submodules of $\fu$.

\medskip

(a) {\it We have $x \not\in  \bigcup_{i=1}^3 kE_{i,i+2}\!\oplus\!\fu^3$}.

\smallskip
\noindent
Suppose that $x \in kE_{i,i+2}\!\oplus\!\fu^3$ for some $i \in \{1,2,3\}$. Since $\fu^3\!=\!kE_{1,4}\!\oplus\!kE_{2,5}\!\oplus\!kE_{1,5}$, we have $[E_{2,3},\fu^3]\!=\!(0)$. It thus follows from 
Lemma \ref{CSC4} that $\deg(x)\!\le\!2$. Consequently,  $\deg(x)\!=\!2$ and $|\msupp(x)|\!=\!1$. If $|S_C|\!=\!1$, then $i\!=\!2$. Since $[E_{2,3},kE_{2,4}\!+\!\fu^3]\!=\!(0)$, we may 
apply Lemma \ref{CSC4} to $\fv\!:=\!kE_{2,4}\!+\!\fu^3$ to obtain a contradiction. Alternatively, we may assume that $i\!=\!1$. As $[E_{2,4},kE_{1,3}\!+\!\fu^3]\!=\!(0)$, another application
of Lemma \ref{CSC4} rules out this case. \hfill $\diamond$ 

\medskip

(b) {\it We have $\deg(x)\!=\!1$ and $|S_C|\!=\! 2,4$}. 

\smallskip
\noindent
Suppose that $\deg(x)\!\ge\!2$. In view of (a), we have $\deg(x)\!=\!2$ and $|\msupp(x)|\!\ge \! 2$. If $|\msupp(x)|\!=\!2=\!|S_C|$, then $\msupp(x)\!=\!S_C$ is $\sigma$-stable, so that $\msupp(x) =
\{\alpha_{1,3},\alpha_{3,5}\}$. Thus, $B\dact x \subseteq \fv:= kE_{1,3}\!\oplus\!kE_{3,5}\!\oplus\!\fu^3$ (see also Lemma \ref{Ms1}). Since $E_{2,4} \in C_\fu(\fv)$, Lemma \ref{CSC4} yields 
a contradiction. If $|\msupp(x)| \!= \!2$ and $|S_C|\!=\!3$, then $\msupp(x)\cap\msupp(\Upsilon(x))$ contains a fixed point of $\sigma$, and we may assume that $\msupp(x) = \{\alpha_{1,3},
\alpha_{2,4}\}$. In view of \cite[(3.4)]{GR}, we may assume that $x= e_{48}=E_{1,3}\!+\!E_{2,4}$. Since $B\dact x \subseteq \fv:= kE_{1,3}\!\oplus\!kE_{2,4}\!\oplus\!\fu^3$, while $E_{1,2}\!+\!E_{3,4} \in C_\fu(x)$,
Lemma \ref{CSC4} yields a contradiction.

We thus assume that $|\msupp(x)|\!=\!3$. Then \cite[(3.4)]{GR} in conjunction with Lemma \ref{Ms1} gives $x\!=\!e_{47}\!=\!E_{1,3}\!+\!E_{2,4}\!+\!E_{3,5}$. Since $E_{1,2}\!+\!E_{3,4} \in C_\fu(x)$, while 
$B\dact x \subseteq \fu^2$, this contradicts Lemma \ref{CSC4}. 

Consequently, $\deg(x)\!=\!1$, so that $\msupp(x) \subseteq \Delta$. Since $\sigma$ acts without fixed points on $\Delta$, every $\sigma$-orbit of $\Delta$ has two elements. As $S_C \subseteq \Delta$
 is a disjoint union of $\sigma$-orbits, we obtain $|S_C|\!=\!2,4$. \hfill $\diamond$
 
\medskip
(c) {\it We have $|\msupp(x)|\!\ge\!2$}.

\smallskip
\noindent
Alternatively, (b) provides $i \in \{1,\ldots,4\}$ such that $B\dact x \subseteq \fv:= kE_{i,i+1}\!+\!\fu^2$. Applying $\Upsilon$, if necessary, we may assume that $i \in \{1,2\}$. 

Suppose that $i\!=\!1$. Then Lemma \ref{Ms1} in conjunction with \cite[(3.4)]{GR} implies that we have to consider the following cases:
\begin{eqnarray*} 
 & x = e_{16} = E_{1,2}\!+\!E_{2,4}\!+\!E_{3,5} \ ; \ x=e_{17} = E_{1,2}\!+\!E_{2,4} \ ; \ x=e_{18} = E_{1,2}\!+\!E_{3,5}\!+\!E_{2,5} \ ; \\
 & x=e_{19} = E_{1,2}\!+\!E_{3,5} \ ; \  x  = e_{20} = E_{1,2}\!+\!E_{2,5} \ ; \ x=e_{21} = E_{1,2}.
 \end{eqnarray*}
Consequently, $E_{3,4} \in C_\fu(x)\!\smallsetminus\!\fv$, which contradicts Lemma \ref{CSC4}.

Suppose that $i\!=\!2$. Then \cite[(3.4)]{GR} implies 
\begin{eqnarray*} 
 & x = e_{29} = E_{2,3}\!+\!E_{3,5}\!+\!E_{1,4} \ ; \ x=e_{30} = E_{2,3}\!+\!E_{3,5} \ ; \ x=e_{31} = E_{2,3}\!+\!E_{1,4} \ ; \\
 & x=e_{32} = E_{2,3}\!+\!E_{1,5} \ ; \  x=e_{33} = E_{2,3}.
 \end{eqnarray*}
Since $E_{4,5} \in [C_\fu(e_{30})\cap C_\fu(e_{32})\cap C_u(e_{33})]\!\smallsetminus\!\fv$, Lemma \ref{CSC4} rules out these possibilities. In view of $E_{4,5}\!+\!E_{1,3} \in 
C_\fu(e_{29})\!\smallsetminus\!\fv$, it remains to discuss the case where $x\!=\!e_{31}$.

We consider the morphism
\[ \fx : k \lra \fu \ \ ; \ \ \alpha \mapsto e_{31}\!+\!\alpha E_{3,5}.\]
Then we have $\fx(\alpha) \in B\dact e_{29}$ for all $\alpha \in k^\times$, while $\fx(0)=e_{31}$. Direct computation shows that
\[ C_\fu(e_{31}) = kE_{2,3}\!\oplus\!kE_{1,3}\!\!\oplus\!kE_{2,4}\!\oplus\!\fu^3.\]
For $y = a E_{2,3}\!+\!bE_{1,3}\!+\!cE_{2,4}\!+\!z \in C_\fu(e_{31})$, where $z \in \fu^3$, we consider the morphism
\[ \fy : k \lra \fu \ \ ; \ \ \alpha \mapsto y\!+\!b\alpha E_{4,5}\!+\!a\alpha E_{3,5}.\]
Since $[\fx(\alpha),\fy(\alpha)]\!=\!0$ for all $\alpha \in k^\times$, Lemma \ref{CSC2} yields
\[ (e_{31},y) = (\fx(0),\fy(0)) \in \fC(\fx(1))=\fC(e_{29}).\]
Consequently, $\fC(e_{31}) \subseteq \fC(e_{29})$. Since $\fC(e_{29}) \not \in \Irr(\cC_2(\fu))$, we again arrive at a contradiction. \hfill $\diamond$ 

\medskip
(d) { \it We have $|S_C|\!=\!4$}.

\smallskip
\noindent
Suppose that $|S_C|\!\ne\!4$. Then (b) implies $|S_C|\!=\!2$ and (c) shows that $\msupp(x) \subseteq \Delta$ is $\sigma$-stable with $2$ elements. Consequently, $\msupp(x)=\{\alpha_1,\alpha_4\}$, 
or $\msupp(x)=\{\alpha_2,\alpha_3\}$. 

If $x\!=\!E_{2,3}\!+\!E_{3,4}\!+\!y$, where $y \in \fu^2$, then \cite[(3.4)]{GR} yields $x \in B\dact e_{23}\cup B\dact e_{24}$, where $e_{23} := E_{2,3}\!+\!E_{3,4}\!+\!E_{1,5}$ and
$e_{24} := E_{2,3}\!+\!E_{3,4}$. We may invoke Lemma \ref{CSC3} to see that $\fC(e_{24}) \subseteq \fC(e_{23})$. It was shown in \cite[(3.4)]{GR}, that $\fC(e_{23})\subseteq \fC(e_1)$. 
Hence $\fC(x)$ is not a component, a contradiction.

It follows that $\msupp(x)=\{\alpha_1,\alpha_4\}$, so that \cite[(3.4)]{GR} implies 
\[x \in B\dact e_{13} \cup B\dact e_{14} \cup B\dact e_{15},\]
where $e_{15}:=E_{1,2}\!+\!E_{4,5}$, $e_{14} := e_{15}+E_{2,5}$ and $e_{13} := e_{15}\!+\!E_{2,4}$. In view of $C_\fu(e_{15}) \subseteq kE_{1,2}\!\oplus\!kE_{4,5}\!\oplus\!\fu^2$, we have
$[C_\fu(e_{15}),E_{2,5}] \subseteq k[E_{1,2},E_{2,5}] =k[e_{15},E_{2,5}]$. Lemma \ref{CSC3} thus shows that $\fC(e_{15}) \subseteq \fC(e_{14})$. In \cite[(3.4)]{GR} it is shown that
$\fC(e_{14}) \subseteq \fC(e_3)$. According to (b), the latter set is not a component, so that $\fC(e_{14})$ isn't either. 

It remains to dispose of the case $x\!=\!e_{13}$. For $(\alpha, \beta) \in k^2$, we consider the elements
\[ e_1(\alpha,\beta) := E_{1,2}\!+\!\alpha E_{2,3}\!+\!\beta E_{3,4}\!+\!E_{4,5} \ \ \text{and} \ \ e_{13}(\alpha,\beta) := e_1(\alpha,\beta)\!+\!E_{2,4}\]
of $\fu$. Let $u_{i,j}(t):=1\!+\!tE_{i,j} \in U  \ \ (t \in k)$, so that $u_{i,j}(t)\dact x = x\!+\!t[E_{i,j},x]$ for all $x \in \fu$. We thus obtain
$e_{13}(\alpha,\beta) = u_{2,3}(\beta^{-1})u_{1,2}(\alpha^{-1}\beta^{-1})\dact e_1(\alpha,\beta)$ for $\alpha\beta \ne 0$. As a result,
\[ e_{13}(\alpha,\beta) \in B\dact e_1 \ \ \ \ \text{for} \ \alpha\beta \ne 0,\]
where $e_1\!=\!e_1(1,1)$. 

Direct computation shows that
\[ C_\fu(e_{13}) = ke_{13}\!\oplus\!kE_{1,3}\!\oplus\!kE_{3,5}\!\oplus\!k(E_{1,4}\!+\!E_{2,5})\!\oplus\!kE_{1,5}.\]
Let $y= a e_{13}\!+\!bE_{1,3}\!+\!c E_{3,5}\!+\!d(E_{1,4}\!+\!E_{2,5})\!+\!e E_{1,5} \in C_\fu(e_{13})$ be such that $b,c \ne 0$. We consider the morphisms
\[ \fx : k \lra \fu \ \ ; \ \ \alpha \mapsto e_{13}(\alpha,\alpha cb^{-1}) \ \ \ \text{and} \ \ \ \fy : k \lra \fu \ \ ; \ \ \alpha \mapsto y\!+\!\alpha a E_{2,3}\!+\!\alpha acb^{-1}E_{3,4}\!+\!\alpha c E_{2,4}\]
and observe that
\begin{enumerate}
\item[(a)] $\fx(\alpha) \in B\dact \fx(1)$ for all $\alpha \in k^\times$, and
\item[(b)] $[\fx(\alpha),\fy(\alpha)] = 0$ for all $\alpha \in k$. \end{enumerate}
Thus, Lemma \ref{CSC2} implies that $(e_{13},y) = (\fx(0),\fy(0)) \in \fC(\fx(1))=\fC(e_1)$. Since the set of those $y$ with $bc\ne 0$ lies dense in $C_\fu(e_{13})$, it follows that $\fC(e_{13}) \subseteq
\fC(e_1)$, a contradiction.  \hfill $\diamond$

\medskip
\noindent
If $\msupp(x)\!=\!S_C$, (d) shows that $\deg(x)\!=\!1$ and $|\msupp(x)|\!=\!4$. Hence $x$ is regular and $\fC(x)\!=\!\fC(e_1)$ is an irreducible component.

If $|\msupp(x)|\! =\! 2$, then $S_C = \msupp(x) \sqcup \sigma(\msupp(x))$ and we only need to consider the cases 
\[ \msupp(x)=\{\alpha_1,\alpha_2\} \ ; \  \{\alpha_1,\alpha_3\}. \]
If $\msupp(x)=\{\alpha_1,\alpha_2\}$, then Lemma \ref{Ms1} yields $B\dact x \subseteq \fv:=kE_{1,2}\!+\!kE_{2,3}\!+\!\fu^2$, while \cite[(3.4)]{GR}
implies 
\[ x= e_5 = E_{1,2}\!+\!E_{2,3}\!+\!E_{3,5} \ ; \ x = e_6 = E_{1,2}\!+\!E_{2,3}.\]
Consequently, $E_{4,5} \in C_\fu(x)\!\smallsetminus\!\fv$, a contradiction.

If $\msupp(x)=\{\alpha_1,\alpha_3\}$, then $B\dact x \subseteq \fv:= kE_{1,2}\!+\!kE_{3,4}\!+\fu^2$ and \cite[(3.4)]{GR} implies 
\begin{eqnarray*}  x= e_9 = E_{1,2}\!+\!E_{3,4}\!+\!E_{2,4}\!+\!E_{2,5} \ ; \  x= e_{10} = E_{1,2}\!+\!E_{3,4}\!+\!E_{2,4} \ ; \\ 
 x= e_{11} = E_{1,2}\!+\!E_{3,4}\!+\!E_{2,5}  \ ; \  x= e_{12} = E_{1,2}\!+\!E_{3,4}.
\end{eqnarray*}
Given $(\alpha,\beta) \in k^2$, we put
\[  x(\alpha,\beta) = E_{1,2}\!+\!E_{3,4}\!+\!\alpha E_{2,4}\!+\! \beta E_{2,5}.\]
Note that
\[ x(\alpha,\beta) \in B\dact x(1,1)=B\dact e_9 \ \ \ \ \ \ \text{for} \ \alpha, \beta \ne 0.\]
We put $\fw:= kE_{3,4}\!\oplus\!k(E_{1,3}\!+\!E_{2,4})\!\oplus\!kE_{3,5}\!\oplus\!kE_{1,4}\!\oplus\!kE_{1,5}$. Direct computation shows that
\[ C_\fu(x(\alpha,\beta)) = k(E_{1,2}\!+\!\alpha E_{2,4}\!+\!\beta E_{2,5})\!\oplus\! \fw\]
for all $(\alpha,\beta) \in k^2$. We have $e_i = x(\delta_{i,10},\delta_{i,11})$ for $i \in \{10,11,12\}$. Thus, if $y = a(E_{1,2}\!+\!\delta_{i,10}E_{2,4}\!+\!\delta_{i,11}E_{2,5})\!+\!w \in C_\fu(e_i)$,
where $a \in k$ and $w \in \fw$, then 
\[ y(\alpha,\beta) = y\!+\!(a\alpha\!-\!a)\delta_{i,10}E_{2,4}+\!(a\beta\!-\!a)\delta_{i,11}E_{2,5} \in C_\fu(x(\alpha,\beta)).\]
Let $i \in \{10,11,12\}$. Then the morphisms
\[ \fx_i : k \lra \fu \ \ ; \ \ \alpha \mapsto x(\alpha(\delta_{i,11}\!+\!\delta_{i,12})\!+\!\delta_{i,10}, \alpha(\delta_{i,10}\!+\!\delta_{i,12})\!+\!\delta_{i,11})\]
and 
\[ \fy_i : k \lra \fu \ \ ; \  \   \alpha \mapsto y(\alpha(\delta_{i,11}\!+\!\delta_{i,12})\!+\!\delta_{i,10}, \alpha(\delta_{i,10}\!+\!\delta_{i,12})\!+\!\delta_{i,11})\]
Fulfill the conditions of Lemma \ref{CSC2}, so that
\[ (e_i, y) = (\fx_i(0),\fy_i(0)) \in \fC(\fx_i(1)) = \fC(e_9).\]
As a result, $\fC(e_i) \subseteq \fC(e_9)$ for $10\!\le\!i\!\le\!12$. 

We have $\dim_k \im (\ad e_9)(\fb)\!=\!\dim_k \im (\ad e_9)\!+\!4$, so that $C_\fu(e_9)\!=\!C_\fb(e_9)$. Thus, Proposition \ref{SpI2} implies 
\[ \dim C_B(e_9) \le \dim_k C_\fb(e_9) = \dim_k C_\fu(e_9) = \dim C_U(e_9),\]
so that $C_B(e_9)^\circ \!= \! C_U(e_9)^\circ$. Consequently, the element $e_9$ is distinguished and $\fC(e_9)$ is a component. Hence $\Upsilon(e_9)$ is also distinguished and \cite[(3.4)]{GR} in 
conjunction with Corollary \ref{DE4} implies that $\fC(e_{25})$ is also a component. 

It remains to consider the case, where $|\msupp(x)|\!=\!3$. Then $\msupp(x)\cap\sigma(\msupp(x))$ is a $\sigma$-stable subset of $\Delta$ of cardinality $2$, so that
\[\msupp(x)\cap\sigma(\msupp(x)) = \{\alpha_1,\alpha_4\} \ ; \ \{\alpha_2,\alpha_3\}.\]
Suppose that $\msupp(x)\cap\sigma(\msupp(x)) = \{\alpha_1,\alpha_4\}$. Then we may assume that $\msupp(x) = \{\alpha_1,\alpha_2,\alpha_4\}$. Thanks to \cite[(3.4)]{GR}
this yields $x=e_3, e_4$. The above methods show that $\fC(e_4) \subseteq \fC(e_3)$, while $e_3$ is a distinguished element. Hence $\fC(e_3)$ and $\Upsilon(\fC(e_3))=\fC(e_7)$ are components 
of $\cC_2(\fu)$.

We finally consider  $\msupp(x)\cap\sigma(\msupp(x)) = \{\alpha_2,\alpha_3\}$ and assume that $\msupp(x)=\{\alpha_1,\alpha_2,\alpha_3\}$. By \cite[(3.4)]{GR}, this implies
\[x=e_2 = E_{1,2}\!+\!E_{2,3}\!+\!E_{3,4}.\]
As $\fC(e_2) \subseteq \fC(e_1)$, this case yields no additional components. It follows that
\[ \Irr(\cC_2(\fu))=\{\fC(e_1),\fC(e_3),\fC(e_7),\fC(e_9),\fC(e_{25})\},\]
so that $|\Irr(\cC_2(\fu))|\!=\!5$. \end{proof}

\bigskip

\begin{Lem} \label{SL3} Let $G\!=\!\SL_4(k)$. Then $\cC_2(\fu)$ is equidimensional with $|\Irr(\cC_2(\fu))|\!=\!2$. \end{Lem}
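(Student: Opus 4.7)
The plan is to follow the template of Lemma \ref{SL2}, exploiting the much simpler combinatorics of $\SL_4(k)$: here $\fu = \bigoplus_{1 \le i < j \le 4} k E_{i,j}$ is six-dimensional, and the involution $\sigma$ swaps $\alpha_1$ with $\alpha_3$ while fixing $\alpha_2$, so the $\sigma$-orbits on $\Delta$ are $\{\alpha_1,\alpha_3\}$ and $\{\alpha_2\}$. Fix $C \in \Irr(\cC_2(\fu))$; Lemma \ref{fmod2} provides $x \in \cO_C$ with $C = \fC(x)$, and Lemma \ref{SL1} shows that $S_C := \msupp(x) \cup \sigma(\msupp(x))$ is a $\sigma$-stable subset of $R_T^+$.

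The first step is to reduce to $\deg(x) = 1$. If $\deg(x) \ge 3$, then $B\dact x \subseteq kE_{1,4}$ and $E_{1,2}$ centralises $kE_{1,4}$ while lying outside it, so Lemma \ref{CSC4} rules this case out. If $\deg(x) = 2$, then $B\dact x$ sits in the abelian subalgebra $\fu^{(\ge 2)} = kE_{1,3} \oplus kE_{2,4} \oplus kE_{1,4}$, and for each admissible $\msupp(x) \subseteq \{\alpha_{1,3},\alpha_{2,4}\}$ a suitable root vector $E_{i,i+1}$ belongs to the centraliser of the smallest $B$-stable subspace containing $B\dact x$ but lies outside it, once more contradicting Lemma \ref{CSC4}. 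Hence $\deg(x) = 1$ and $\msupp(x) \subseteq \Delta$.

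Theorem \ref{Df2} in conjunction with Proposition \ref{fmod1} yields $\dim \cC_2(\fu) = \dim B$, so Lemma \ref{Df1}(2) forces $x$ to be distinguished for $B$ whenever $\fC(x)$ is an irreducible component, and Corollary \ref{DE4} then gives $\rk(\ZZ \supp(x)) = 3$. The regular orbit produces the component $\fC(e_1)$ with $e_1 = E_{1,2}+E_{2,3}+E_{3,4}$ via Lemma \ref{Df3}. Consulting the $B$-orbit list of \cite[\S 3]{GR} for type $A_3$ and pairing each candidate with its $\sigma$-image, a short case check identifies exactly one further component $\fC(e_2)$, whose distinguished representative $e_2$ one verifies by the equality $C_\fu(e_2) = C_\fb(e_2)$ (so that $C_B(e_2)^\circ = C_U(e_2)^\circ$ by Proposition \ref{SpI2}); then Lemma \ref{Df1}(2) gives $\dim \fC(e_2) = \dim B - \dim Z(G) = \dim \cC_2(\fu)$.

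For every remaining $B$-orbit representative $e$, one exhibits morphisms $\fx,\fy\colon k \to \fu$ as in Lemma \ref{CSC2}, or a suitable vector $c \in \fu \cap \fg_{\alpha_0}$ as in Lemma \ref{CSC3}, realising $e$ as the value at $\alpha = 0$ of a one-parameter family whose generic member lies in $B\dact e_1$ or $B\dact e_2$; this forces $\fC(e) \subseteq \fC(e_1) \cup \fC(e_2)$ and removes $\fC(e)$ from the list of components. The main obstacle will be the explicit construction of these one-parameter families for each orbit in the GR list, but with only a handful of orbits to treat (compared with $A_4$), each case reduces to one of the patterns already established in Lemma \ref{SL2}. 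Equidimensionality follows at once, and the count $|\Irr(\cC_2(\fu))| = 2$ drops out of the case analysis.
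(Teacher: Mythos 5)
Your overall strategy---redo the case analysis of Lemma \ref{SL2} for the smaller root system $A_3$---is viable in principle, but it is not the route the paper takes, and as written it has two real problems. The paper avoids a fresh case analysis entirely: it realizes $\GL_4(k)$ inside a Levi subgroup of $\GL_5(k)$ and uses the truncation $\pi:(a_{ij})\mapsto(a_{ij})_{1\le i\le j\le 4}$ of upper-triangular matrices. The differential $\msd(\pi):\fu'\to\fu$ is split surjective and $B'$-equivariant, so $[\msd(\pi)\!\times\!\msd(\pi)]:\cC_2(\fu')\to\cC_2(\fu)$ is surjective and carries $\fC(x')$ into $\fC(\msd(\pi)(x'))$. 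Hence the five components of $\cC_2(\fu')$ found in Lemma \ref{SL2} push down to at most five candidates $\fC(\msd(\pi)(e_i'))$, which are then computed explicitly and collapse to $\fC(e_1)$ and $\fC(e_3)$ after two short containment arguments. This buys a proof of the $A_3$ case in a few lines, at the cost of inheriting the hypothesis $\Char(k)\ne 2$ from Lemma \ref{SL2}; your direct approach, if completed, would be self-contained.

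The first gap in your argument is logical: you write that $\dim\cC_2(\fu)=\dim B$ together with Lemma \ref{Df1}(2) ``forces $x$ to be distinguished for $B$ whenever $\fC(x)$ is an irreducible component.'' That does not follow. Theorem \ref{Df2} gives the \emph{maximal} dimension of a component; a priori a component $\fC(x)$ could have $\dim\fC(x)=\dim B-\dim C_T(x)<\dim B$, and Lemma \ref{Df1}(2) only identifies distinguished elements with the \emph{top-dimensional} $\fC(x)$. The implication ``component $\Rightarrow$ distinguished'' is exactly equivalent to the equidimensionality you are trying to prove, so using it to prune the candidate list is circular. (The correct direction, used in the paper's remark after Lemma \ref{fmod2}, is that every distinguished orbit \emph{does} yield a component.) The second gap is that the decisive work is announced rather than done: ``a short case check identifies exactly one further component $\fC(e_2)$'' and ``for every remaining $B$-orbit representative $e$, one exhibits morphisms $\fx,\fy$ \dots'' are precisely the steps that constitute the proof in this approach. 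You do not say which element the second component comes from (in the labelling of \cite[(3.3)]{GR} it is $e_3=E_{1,2}+E_{3,4}+E_{2,4}$, not $e_2=E_{1,2}+E_{2,3}$, whose component is contained in $\fC(e_1)$), and you do not exhibit a single one of the required degenerations. Your reductions to $\deg(x)=1$ via Lemma \ref{CSC4} are fine (indeed $E_{2,3}$ centralizes all of $\fu^{(\ge 2)}$, which kills $\deg(x)\ge 2$ in one stroke), but beyond that point the proposal is a plan, not a proof.
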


\begin{proof} We consider $\GL_n(k)\!=\! \SL_n(k)Z(\GL_n)$ along with its standard Borel subgroup $B_n\!=\! U_n\!\rtimes\!T_n$ of upper triangular matrices, where
$U_n$ and $T_n$ are the groups unitriangular and diagonal matrices, respectively. The $B$ orbits of $\fu_n\!:=\!\Lie(U_n)$ coincide with those of the standard Borel subgroup
$B_n\cap \SL_n(k)$ of $\SL_n(k)$. 

We consider $G'\!:=\! \GL_5(k)$ along with its commuting variety $\cC_2(\fu')$. In view of Lemma \ref{SL2},  we have
\[ \Irr(\cC_2(\fu'))=\{\fC(e'_1),\fC(e'_3),\fC(e'_7),\fC(e'_9),\fC(e'_{25})\}.\]
Let $A'$ and $A$ be the associative algebras of upper triangular $(5\!\times\!5)$-matrices and upper triangular $(4\!\times\!4)$-matrices, respectively. Then
\[ \pi : A' \lra A \ \ ; \ \ (a_{ij}) \mapsto (a_{ij})_{1\le i \le j \le 4}\]
is homomorphisms of $k$-algebras. Thus, if we identify $G:=\GL_4(k)$ with a subgroup of the Levi subgroup of $G'$, given by $\Delta_4:= \{\alpha'_1,\alpha'_2,\alpha'_3\}$, then 
the restriction
\[ \pi : B' \lra B\]
is a homomorphism of groups such that $\pi|_B\! =\! \id_B$. It follows that the differential
\[ \msd(\pi) : \fu' \lra \fu\]
of the restriction $\pi|_{U'} : U' \lra U$ is split surjective such that
\[ \msd(\pi)(b'\dact x') = \pi(b')\dact \msd(\pi)(x') \ \ \ \ \ \ \ \text{for all} \ b' \in B', x' \in \fu'.\] 
As a result, the morphism
\[ [\msd(\pi)\!\times\!\msd(\pi)] : \cC_2(\fu') \lra \cC_2(\fu) \]
is surjective and such that
\[ [\msd(\pi)\!\times\!\msd(\pi)](B'\dact (\{x'\}\!\times\!C_{\fu'}(x')) \subseteq  B\dact (\{\msd(\pi)(x')\}\!\times\! C_{\fu}(\msd(\pi)(x'))),\] 
whence
\[  [\msd(\pi)\!\times\!\msd(\pi)](\fC(x')) \subseteq \fC(\msd(\pi)(x')) \ \ \ \ \ \ \ \ \text{for all} \ x' \in \fu'.\]
Consequently,
\[\Irr(\cC_2(\fu)) \subseteq \{\fC(\msd(\pi)(e'_1)),\fC(\msd(\pi)(e'_3)),\fC(\msd(\pi)(e'_7)),\fC(\msd(\pi)(e'_9)),\fC(\msd(\pi)(e'_{25}))\}.\]
Thanks to \cite[(3.3),(3.4)]{GR}, we obtain
\[ \msd(\pi)(e'_1)=e_1 \ ; \ \msd(\pi)(e'_3) \in B\dact e_2 \ ; \ \msd(\pi)(e'_7) = e_3 \ ; \ \msd(\pi)(e'_9) = e_3 \ ; \ \msd(\pi)(e'_{25}) = e_8.\]
In \cite[(3.3)]{GR}, the authors show that $\fC(e_8) \subseteq \fC(e_1)$. By applying Lemma \ref{CSC2} to the morphism
\[ \fx : k \lra \fu \ \ ; \ \ \alpha \mapsto E_{1,2}\!+\!E_{2,3}\!+\!\alpha E_{3,4}\]
we obtain $\fC(e_2) \subseteq \fC(e_1)$.

Since the element $e_1$ is regular, it is distinguished. As $\dim_k (\ad e_3)(\fb) = \dim_k(\ad e_3)(\fu)\!+\!3=5$, we obtain, observing Proposition \ref{SpI2},
\[ \dim C_B(e_3) \le \dim_kC_\fb(e_3)=\dim_k C_\fu(e_3)= \dim C_U(e_3),\] 
so that $C_B(e_3)^\circ\! =\! C_U(e_3)^\circ$. Hence $e_3$ is distinguished for $B$, and $\Irr(\cC_2(\fu))\! =\!\{\fC(e_1),\fC(e_3)\}$. \end{proof}

\bigskip
\noindent
The same method readily shows:

\bigskip

\begin{Lem} \label{SL4} Let $G\!=\!\SL_n(k)$, where $n\!=\!2,3$. Then $\cC_2(\fu)$ is irreducible. \end{Lem}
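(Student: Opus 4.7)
The plan for $n\!=\!2$ is immediate by inspection: $\fu\!=\!kE_{1,2}$ is one-dimensional and abelian, so $\cC_2(\fu)\!=\!\fu\!\times\!\fu\!\cong\!\A^2$ is trivially irreducible.

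For $n\!=\!3$, the cleanest route is a direct hypersurface computation. The Lie algebra $\fu$ is the three-dimensional Heisenberg algebra with basis $E_{1,2},E_{2,3},E_{1,3}$ and unique non-trivial bracket $[E_{1,2},E_{2,3}]\!=\!E_{1,3}$. Consequently, for $x\!=\!aE_{1,2}\!+\!bE_{2,3}\!+\!cE_{1,3}$ and $y\!=\!a'E_{1,2}\!+\!b'E_{2,3}\!+\!c'E_{1,3}$ one computes $[x,y]\!=\!(ab'\!-\!a'b)E_{1,3}$, whence
\[ \cC_2(\fu) \;=\; \{(x,y)\in\fu\!\times\!\fu \ ; \ ab'\!-\!a'b\!=\!0\} \;\subseteq\; \fu\!\times\!\fu\cong\A^6 \]
is cut out by the single polynomial $ab'\!-\!a'b$. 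This polynomial is the $(2\!\times\!2)$-determinant $\det\left(\begin{smallmatrix}a & b\\ a' & b'\end{smallmatrix}\right)$, a non-degenerate bilinear form in four variables, hence irreducible. Therefore $\cC_2(\fu)$ is an irreducible hypersurface of dimension $5$, in agreement with Theorem \ref{Df2}, which predicts $\dim B\!-\!\dim Z(G)\!+\!\modd(B;\fu)\!=\!5\!-\!0\!+\!0\!=\!5$.

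The same conclusion could alternatively be reached by mimicking the projection method of Lemma \ref{SL3} with $\SL_4(k)$ in place of $\SL_5(k)$: the restriction-to-top-left-$(3\!\times\!3)$-block homomorphism $\pi:B'\!\lra\!B$ satisfies $\pi|_B\!=\!\id_B$, and, as in Lemma \ref{SL3}, the induced surjection $[\msd(\pi)\!\times\!\msd(\pi)]\!:\!\cC_2(\fu')\twoheadrightarrow\cC_2(\fu)$ satisfies $[\msd(\pi)\!\times\!\msd(\pi)](\fC(x'))\!\subseteq\!\fC(\msd(\pi)(x'))$. Together with Lemma \ref{SL3} this cuts the possible components of $\cC_2(\fu)$ down to $\{\fC(e_1),\fC(\msd(\pi)(e'_3))\}$, and a straightforward application of Lemma \ref{CSC3} (with $x\!=\!E_{1,2}$, $c\!=\!E_{2,3}$, using the Heisenberg identity $C_\fu(E_{1,2})\!=\!kE_{1,2}\!\oplus\!kE_{1,3}$ so that $[E_{2,3},C_\fu(E_{1,2})]\!=\!k[E_{2,3},E_{1,2}]\!=\!kE_{1,3}$) absorbs the residual candidate into $\fC(e_1)$.

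Neither route presents a real obstacle; the only delicate point of the projection version would be identifying the $B$-orbit of $\msd(\pi)(e'_3)$, and the hypersurface computation sidesteps this entirely by producing $\cC_2(\fu)$ at one stroke as the zero locus of a single irreducible polynomial.
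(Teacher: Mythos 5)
Your proof is correct, and your primary route is genuinely different from the paper's. The paper disposes of $n\!=\!2,3$ with a one-line appeal to ``the same method'' as Lemma \ref{SL3}, i.e.\ the projection argument: push the component list of $\cC_2(\fu')$ for $\GL_{n+2}(k)$ down through the split surjection $[\msd(\pi)\!\times\!\msd(\pi)]$, identify the images $\fC(\msd(\pi)(e'_i))$, and absorb the non-distinguished candidates. Your second paragraph reproduces exactly this, and your application of Lemma \ref{CSC3} with $x\!=\!E_{1,2}$, $c\!=\!E_{2,3}$ is valid (condition (a) holds since $\rk(\ZZ\{\alpha_1,\alpha_2\})\!=\!2\!>\!1$, and condition (b) holds since $[E_{2,3},C_\fu(E_{1,2})]\!=\!kE_{1,3}\!=\!k[E_{2,3},E_{1,2}]$). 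But your first route is better for this particular case: for $n\!=\!2$ the algebra is abelian and $\cC_2(\fu)\!=\!\A^2$, and for $n\!=\!3$ the Heisenberg bracket exhibits $\cC_2(\fu)$ as the vanishing locus of the single quadric $ab'\!-\!a'b$ in $\A^6$, which is the generic $2\!\times\!2$ determinant and hence irreducible over any field (so no restriction on $\Char(k)$ is needed). This gives irreducibility \emph{and} the dimension $5\!=\!\dim B\!-\!\dim Z(G)$ in one stroke, with no dependence on the orbit tables of \cite{GR} or on Lemmas \ref{SL2}--\ref{SL3}; the only price is that it does not generalize, whereas the projection method is the one that scales to $n\!=\!4,5$. (Minor quibble: $ab'\!-\!a'b$ is a quadratic form of rank $4$ in the four variables $a,b,a',b'$ rather than a ``non-degenerate bilinear form''; the irreducibility claim itself is standard and correct.)
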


\bigskip

\subsection{Symplectic groups}
The following result disposes of the remaining case:

\bigskip

\begin{Lem} \label{Sp1} Suppose that $\Char(k)\!\ne\!2$. Let $G\!=\!\Sp(4)$ be of type $B_2\!=\!C_2$. Then $\fC_2(\fu)$ is equi-dimensional with $|\Irr(\cC_2(\fu))|\!=\!2$. \end{Lem}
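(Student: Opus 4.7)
The plan is to follow the strategy of Lemmas \ref{SL2}--\ref{SL4}, suitably adapted to the symplectic setting. Since $B_2$ appears in the Hille--R\"ohrle list, Proposition \ref{fmod1} gives $\modd(B;\fu)=0$, so $B$ acts on $\fu$ with only finitely many orbits, and Theorem \ref{Df2} together with $\dim Z(\Sp(4))=0$ yields $\dim \cC_2(\fu)=\dim B = 6$. By Lemma \ref{fmod2}, every irreducible component of $\cC_2(\fu)$ has the form $\fC(x)$ for some $B$-orbit representative $x \in \fu$, so it suffices to identify the representatives that give closed irreducibles of full dimension $6$.

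I would then fix the root space decomposition $\fu = \fu_{\alpha_1}\oplus \fu_{\alpha_2}\oplus \fu_{\alpha_1+\alpha_2}\oplus \fu_{2\alpha_1+\alpha_2}$ (with $\alpha_1$ short and $\alpha_2$ long) and produce a short explicit list of $B$-orbit representatives by normalizing coefficients via the maximal torus $T \subseteq B$. According to Corollary \ref{DE4}, the distinguished representatives are exactly those $x$ with $\rk(\ZZ\supp(x))=\ssrk(G)=2$; distinguishedness of each candidate can then be verified through a direct calculation of $C_\fu(x)$, combined with Proposition \ref{SpI2} and Lemma \ref{Df1}(2). The expectation, matching the Corollary in the Introduction, is that precisely two $B$-orbits are distinguished: the regular one, represented by $e_1 := x_{\alpha_1}+x_{\alpha_2}$, and one further orbit represented by an element whose support is a rank-two set involving at least one non-simple root. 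The assumption $\Char(k)\ne 2$ enters here to ensure the non-degeneracy of the bracket $[\fu_{\alpha_1},\fu_{\alpha_1+\alpha_2}]\subseteq \fu_{2\alpha_1+\alpha_2}$.

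Lemma \ref{Df3} then shows that $\fC(e_1)$ is an irreducible component of $\cC_2(\fu)$ of dimension $6$, and Lemma \ref{Df1}(2) shows that the second distinguished orbit produces a second closed irreducible $\fC(e_2)$ of dimension $6$, distinct from $\fC(e_1)$ (a comparison of $\msupp$ in the spirit of Lemma \ref{Ms1} separates them). For each remaining non-distinguished orbit representative $x$, I would show $\fC(x) \subseteq \fC(e_i)$ for some $i \in \{1,2\}$ using the three-step toolkit of Section 2.1: Lemma \ref{CSC4} rules out representatives whose $B$-orbit is trapped in a proper $B$-submodule $\fv\subseteq\fu$ with $C_\fu(\fv)\not\subseteq\fv$; Lemma \ref{CSC3} absorbs orbits of smaller support-rank into orbits with strictly larger support-rank by adjoining an extra root vector; and Lemma \ref{CSC2} handles the remaining limit arguments via morphisms $\alpha\mapsto(\fx(\alpha),\fy(\alpha))$ with $\fx(k^\times)\subseteq B\dact e_i$. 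A count then gives $|\Irr(\cC_2(\fu))|=2$, and equidimensionality follows since both components have dimension $6$.

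The main obstacle will be the final case analysis. Unlike the $A_n$-case, where the graph automorphism provides an involution $\Upsilon$ that halves the workload, the Dynkin diagram of $B_2$ has no non-trivial automorphism, so each non-distinguished orbit representative must be inspected individually. The most delicate cases are those of mixed support simultaneously involving $x_{\alpha_1+\alpha_2}$ and $x_{2\alpha_1+\alpha_2}$; there one must carry out the deformation argument of Lemma \ref{CSC2} explicitly, using the $\fsp(4)$ structure constants, to exhibit the required containment in $\fC(e_1)$ or $\fC(e_2)$.
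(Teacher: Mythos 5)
Your plan is correct and follows essentially the same route as the paper: the paper likewise forces $\deg(x)\!=\!1$ for any component $\fC(x)$ via Lemma \ref{CSC4} (applied to $\fv=\fu^{(\ge 2)}$, using $[\fu_\alpha,\fu^{(\ge2)}]=(0)$), absorbs the two remaining non-distinguished orbits $B\dact x_\alpha$ and $B\dact x_\beta$ into $\fC(x_\alpha\!+\!x_{\alpha+2\beta})$ and $\fC(x_\alpha\!+\!x_\beta)$ by Lemma \ref{CSC3}, and then checks that these two elements are distinguished by the centralizer computation of Lemma \ref{SL3} --- which is exactly where $\Char(k)\!\ne\!2$ enters, as you anticipated. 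The final case analysis is in fact shorter than you fear: all mixed-support representatives with $\deg(x)\ge 2$ are eliminated at once by the initial application of Lemma \ref{CSC4}, and Lemma \ref{CSC2} is not needed in this case.
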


\begin{proof} Recall that $R_T^+:=\{\alpha, \beta, \alpha\!+\!\beta, \alpha\!+\!2\beta\}$ is a system of positive roots, where $\Delta=\{\alpha,\beta\}$. Suppose that $\fC(x)$ is a component.
Since $[\fu_\alpha, \fu^{(\ge 2)}]=(0)$, Lemma \ref{CSC4} implies $\deg(x)\!=\!1$. 

Suppose that $|\msupp(x)|\!=\!1$. If $\msupp(x)\!=\!\{\alpha\}$, then \cite[(3.5)]{GR} yields $x \in B\dact x_\alpha\! \cup B\dact (x_\alpha\!+\!x_{\alpha+2\beta})$, while Lemma \ref{CSC3} gives 
$\fC(x_\alpha) \subseteq \fC(x_\alpha\!+\!x_{\alpha+2\beta})$. 

Alternatively, $x \in B\dact x_\beta$. Since $C_\fu(x_\beta)= kx_\beta\!\oplus\!kx_{\alpha+2\beta}$, we have $[x_\alpha, C_\fu(x_\beta)] = k [x_\alpha, x_\beta]$ and Lemma \ref{CSC3} implies 
$\fC(x_\beta) \subseteq \fC(x_\alpha\!+\!x_\beta)$. As a result,
\[\cC_2(\fu)=\fC(x_{\alpha}\!+\!x_{\beta})\cup\fC(x_\alpha\!+\!x_{\alpha+2\beta}).\] 
Since $\Char(k)\!\ne\!2$, the arguments of Lemma \ref{SL3} show that these elements are distinguished. Consequently, $\Irr(\cC_2(\fu))=\{\fC(x_{\alpha}\!+\!x_{\beta}),
\fC(x_\alpha\!+\!x_{\alpha+2\beta})\}$. \end{proof} 

\bigskip

\subsection{Proof of Proposition \ref{Asg1}}
\begin{proof} (1) Let us first consider an almost simple group $G$ of type $A_n$ for $n \in \{1,\ldots, 4\}$. In view of \cite[(II.1.13),(II.1.14)]{Ja03}, there is a covering $\pi :\SL_{n+1}(k) \lra G$. Hence
$\pi$ is surjective and $\ker \pi \subseteq Z(G)$ is diagonalizable. Let $B_{n+1} \subseteq \SL_{n+1}(k)$ be a Borel subgroup, $U_{n+1} \unlhd B_{n+1}$ be its unipotent radical with Lie algebra
$\fu_{n+1}$. Then $B\!:=\! \pi(B_{n+1})$ is a Borel subgroup of $G$ with unipotent radical $U\!:=\!\pi(U_{n+1})$. Since $\ker \pi\cap U_{n+1} = \{1\}$, it follows that $\pi|_{U_{n+1}}$ is a closed embedding, 
so that $\pi|_{U_{n+1}} : U_{n+1} \lra U$ is an isomorphism. Consequently, its differential 
\[ \msd(\pi) : \fu_{n+1} \lra \fu\]
is an isomorphism of Lie algebras such that 
\[\pi(b)\dact \msd(\pi)(x) = \msd(\pi)(b\dact x) \ \ \ \ \ \ \ \ \ \forall \ x \in \fu_{n+1}, b \in B_{n+1}.\]
Thanks to Section \ref{S:SL}, the variety $\cC_2(\fu_{n+1})\!\cong\!\cC_2(\fu)$ is equidimensional with $|\Irr(\cC_2(\fu))|\!=\!|\Irr(\cC_2(\fu_{n+1}))|$.

(2) Since $\Sp(4)$ is simply connected, we may use the foregoing arguments in conjunction with Lemma \ref{Sp1}. \end{proof}

\bigskip

\subsection{Irreducibility and equidimensionality of $\cC_2(\fu)$}
We record the following direct consequence of Proposition \ref{Asg1}:

\bigskip

\begin{Cor} \label{Irr1} Let $G$ be connected, reductive such that $\Char(k)$ is good for $G$. Suppose that $B \subseteq G$ is a Borel subgroup with unipotent radical $U$, whose Lie algebra
is denoted $\fu$.
\begin{enumerate}
\item If $\modd(B;\fu)\!=\!0$, then $\cC_2(\fu)$ is equidimensional.
\item $\cC_2(\fu)$ is irreducible if and only if every almost simple component of $(G,G)$ is of type $A_1$ or $A_2$. \end{enumerate} \end{Cor}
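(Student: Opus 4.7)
My plan is to reduce both claims to the almost simple case via the product decomposition of $\cC_2(\fu)$ implicit in the proof of Theorem \ref{Df2}, and then to assemble the relevant pieces from Propositions \ref{fmod1} and \ref{Asg1} together with Lemma \ref{Df3}.

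First I would pass from $G$ to its derived group. Writing $G':=(G,G)$, the arguments of Proposition \ref{SpI2} show that $(B\cap G')^\circ$ is a Borel subgroup of $G'$ with the same unipotent radical $U$ and the same Lie algebra $\fu$; since $Z(G)^\circ$ acts trivially on $\fu$, one has $B\dact x = (B\cap G')^\circ\dact x$ for every $x \in \fu$, so both $\cC_2(\fu)$ and $\modd(B;\fu)$ are unchanged when $G$ is replaced by $G'$. I may therefore assume $G$ is semisimple with almost simple constituents $G_1,\ldots,G_n$. Choosing compatible Borel subgroups $B_i \subseteq G_i$ with unipotent radicals $U_i$ and setting $\fu_i:=\Lie(U_i)$, Lemma \ref{SpI1} gives $U\cong\prod_i U_i$ as algebraic groups, and the induced isomorphism $\fu\cong\bigoplus_i\fu_i$ produces
\[ \cC_2(\fu) \cong \prod_{i=1}^n \cC_2(\fu_i).\]
Because $k$ is algebraically closed, the irreducible components of this product are precisely the products $\prod_i C_i$ with $C_i\in\Irr(\cC_2(\fu_i))$, so $\cC_2(\fu)$ is irreducible (resp.\ equidimensional) exactly when each factor $\cC_2(\fu_i)$ has the corresponding property.

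For (1), the hypothesis $\modd(B;\fu)=0$ forces, via Proposition \ref{fmod1}, every $G_i$ to be of type $(A_n)_{n\le 4}$ or $B_2$; Proposition \ref{Asg1} then renders each $\cC_2(\fu_i)$ equidimensional, and the product description above yields the conclusion.

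For (2), the ``if'' direction is immediate: when every $G_i$ has type $A_1$ or $A_2$, Proposition \ref{Asg1}(1) says each $\cC_2(\fu_i)$ is irreducible, and hence so is the product. For the converse I assume $\cC_2(\fu)$ irreducible, so that each $\cC_2(\fu_i)$ is irreducible, and must exclude every remaining Dynkin type. Proposition \ref{Asg1} already gives $|\Irr(\cC_2(\fu_i))|\ge 2$ for types $A_3$, $A_4$, and $B_2$. For any other type, Proposition \ref{fmod1} forces $\modd(B_i;\fu_i)>0$, so Theorem \ref{Df2} yields $\dim\cC_2(\fu_i) > \dim B_i\!-\!\dim Z(G_i)$; by Lemma \ref{Df3} the right-hand side is the dimension of the component $\fC(x)$ attached to a regular nilpotent $x\in\cO_{\reg}\cap\fu_i$, so this regular component is proper and $\cC_2(\fu_i)$ fails to be irreducible. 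The only delicate point is the bookkeeping in this last step --- ensuring every Dynkin type outside $\{A_1,A_2\}$ is actually covered --- but once Hille--R\"ohrle and Proposition \ref{Asg1} are combined with Lemma \ref{Df3}, this is entirely formal.
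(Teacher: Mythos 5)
Your proposal is correct and follows essentially the same route as the paper: reduce to the semisimple case, use the product decomposition $\cC_2(\fu)\cong\prod_i\cC_2(\fu_i)$ from Lemma \ref{SpI1}, and then combine Proposition \ref{fmod1}, Proposition \ref{Asg1}, Theorem \ref{Df2} and Lemma \ref{Df3} exactly as the paper's (very terse) proof indicates. Your write-up merely spells out the dimension comparison for the regular component in the positive-modality case, which the paper leaves implicit.
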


\begin{proof} Let $G_1, \ldots, G_n$ be the almost simple components of $G$. As before, we may write 
\[ B=Z(G)^\circ B_1\cdots B_n,\]
where $B_i \subseteq G_i$ is a Borel subgroup. Letting $U_i$ be the unipotent radical of $B_i$ and setting $\fu_i := \Lie(U_i)$, we have $\cC_2(\fu) \cong \prod_{i=1}^n\cC_2(\fu_i)$.
This shows that
\[ \Irr(\cC_2(\fu)) = \{ \prod_{i=1}^n C_i \ ; \ C_i \in \Irr(\cC_2(\fu_i)) \ \ 1\!\le\!i\!\le\!n\}.\]
(1) The Theorem of Hille-R\"ohrle shows that each $G_i$ is of type $(A_n)_{n\le 4}$ or $B_2$. Thanks to Proposition \ref{Asg1}, each $\cC_2(\fu_i)$ is equidimensional. Hence $\cC_2(\fu)$
enjoys the same property. 

(2) If $\cC_2(\fu)$ is irreducible, then so is each $\cC_2(\fu_i)$, and a consecutive application of Theorem \ref{Df2}, Lemma \ref{Df3}, \cite[(1.1)]{HR} and Proposition \ref{Asg1} ensures that each 
almost simple group $G_i$ is of type $A_1$ or $A_2$.  The reverse direction follows directly from Proposition \ref{Asg1}. \end{proof}

\bigskip

\begin{Remark} Suppose that $G$ is almost simple of type $A-D$. If $p\!\ge\!h(G)$ is good for $G$, then \cite[(1.7),(1.8)]{SFB} in conjunction with the foregoing result implies that the
variety $V(U_2)$ of infinitesimal one-paramenter subgroups of the second Frobenius kernel $U_2$ of $U$ is irreducible if and only if $G$ is of type $A_1$ or $A_2$. \end{Remark}  

\bigskip

\section{The variety $\A(2,\fu)$}
Let $\fu\!:=\!\Lie(U)$ be the Lie algebra of the unipotent radical $U$ of a Borel subgroup $B$ of a connected reductive group $G$. In this section, we are interested in the projective variety
\[ \A(2,\fu) := \{ \fa \in \Gr_2(\fu) \ ; \ [\fa,\fa]=(0)\}\]
of two-dimensional abelian subalgebras of $\fu$. Recall that
\[ \cO_2(\fu):=\{(x,y) \in \cC_2(\fu) \ ; \ \dim_kkx\!+\!ky=2\}\]
is an open, $\GL_2(k)$-stable subset of $\cC_2(\fu)$, while the map
\[ \varphi : \cO_2(\fu) \lra \A(2,\fu) \ \ ; \ \ (x,y) \mapsto kx\!+\!ky\]
is a surjective morphism such that $\varphi^{-1}(\varphi(x,y)) = \GL_2(k)\dact (x,y)$ for all $(x,y) \in \cO_2(\fu)$. Note that $\GL_2(k)$ acts simply on $\cO_2(\fu)$, so that each fiber of $\varphi$
is $4$-dimensional.

The Borel subgroup $B$ acts on $\A(2,\fu)$ via
\[ b\dact \fa := \Ad(b)(\fa) \ \ \ \ \ \ \ \ \forall \ b \in B, \fa \in \A(2,\fu).\]
Moreover, the set $\cO_2(\fu)$ is $B$-stable and $\varphi : \cO_2(\fu) \lra \A(2,\fu)$ is $B$-equivariant. 

\bigskip

\begin{Lemma} \label{Var1} Suppose that $\ssrk(G)\!\ge\!2$. Then the following statements hold:
\begin{enumerate}
\item Given $x \in \fu\!\smallsetminus\!\{0\}$, there is $y \in \fu$ such that $(x,y) \in \cO_2(\fu)$.
\item $\cO_2(\fu)$ lies dense in $\cC_2(\fu)$. \end{enumerate} \end{Lemma}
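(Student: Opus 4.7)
The plan is to reduce (2) to (1) by a direct closure argument, so the real content lies in (1). Two soft facts will do all the work: since $\fu$ is a nonzero nilpotent Lie algebra, its center $Z(\fu)$ is nonzero; and since $\ssrk(G)\!\ge\!2$, the number of positive roots of $G$ relative to $T$ is at least $2$, so $\dim_k\fu\!\ge\!2$.

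For (1), I would split on whether $x$ lies in $Z(\fu)$. If $x\in Z(\fu)$, then $C_\fu(x)=\fu$ has dimension $\ge 2$, so any $y\in\fu\!\smallsetminus\!kx$ yields a pair $(x,y)\in\cO_2(\fu)$. If $x\notin Z(\fu)$, I pick a nonzero $z\in Z(\fu)$; then $z\in C_\fu(x)$, while $z\notin kx$, for otherwise $x$ itself would be central. In either case we obtain the desired pair.

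For (2), I would observe that $\cO_2(\fu)$ is open in $\cC_2(\fu)$ and show that every $(x_0,y_0)\in\cC_2(\fu)\!\smallsetminus\!\cO_2(\fu)$ lies in $\overline{\cO_2(\fu)}$; such a pair has $x_0,y_0$ linearly dependent. Each of the three resulting cases is handled by exhibiting a morphism $f:\A^1\lra\cC_2(\fu)$ with $f(\A^1\!\smallsetminus\!\{0\})\subseteq\cO_2(\fu)$ and $f(0)=(x_0,y_0)$. Concretely, if $x_0\ne 0$ and $y_0=\lambda x_0$, part (1) supplies $z\in C_\fu(x_0)\!\smallsetminus\!kx_0$, and
\[ f(t):=(x_0,\lambda x_0+tz) \]
has the required property; the case $x_0=0,\,y_0\ne 0$ is handled symmetrically; and for $(x_0,y_0)=(0,0)$, any $(x_1,y_1)\in\cO_2(\fu)$ produced by (1) yields $f(t):=(tx_1,ty_1)$.

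There is no genuine obstacle here: both statements are quite soft once one recognizes that nilpotence of $\fu$ forces $Z(\fu)\ne 0$ and that $\ssrk(G)\!\ge\!2$ forces $\dim_k\fu\!\ge\!2$. The only mild point to verify in each case is that the displayed brackets vanish, which is immediate from the choices of $z$ and $(x_1,y_1)$.
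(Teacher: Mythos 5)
Your proof is correct and follows essentially the same route as the paper: part (1) rests on the two soft facts that $Z(\fu)\ne(0)$ for the nonzero nilpotent algebra $\fu$ and that $\ssrk(G)\ge 2$ forces $\dim_k\fu\ge 2$, and part (2) degenerates each pair with linearly dependent entries along an explicit morphism from the affine line that lands in $\cO_2(\fu)$ away from the origin. The only cosmetic difference is that the paper handles $(0,y_0)$ via the $\GL_2(k)$-action rather than your symmetric construction.
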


\begin{proof} (1) Let $z \in C(\fu)\!\smallsetminus\!\{0\}$. If $x \in \fu\!\smallsetminus\!kz$, then $(x,z) \in \cO_2(\fg)$. Alternatively, $x \in kz\!\smallsetminus\!\{0\}$. Since $\ssrk(G)\!\ge\!2$, we have 
$\dim_k\fu\!>\!1$, so that there is $y \in \fu\!\smallsetminus\!kx$. It follows that $(x,y) \in \cO_2(\fu)$.  

(2) Let $x \in \fu\!\smallsetminus\!\{0\}$. By (1), there is $y \in \fu$ such that $(x,y) \in \cO_2(\fu)$. Given $\beta \in k$, we consider the morphism
\[ f_\beta : k \lra \cC_2(\fu) \ \ ; \ \ \alpha \mapsto (x,\beta x\!+\!\alpha y).\]
Then we have $f_\beta(k^\times) \subseteq \cO_2(\fu)$, so that $f(k) \subseteq \overline{\cO_2(\fu)}$. In particular, $(x,\beta x) = f(0) \in  \overline{\cO_2(\fu)}$. Setting
$\beta=0$, we obtain $(x,0) \in  \overline{\cO_2(\fu)}$. Using the $\GL_2(k)$-action, we conclude that $(0,x) \in  \overline{\cO_2(\fu)}$. Since 
\[ g : k \lra  \cC_2(\fu) \ \ ; \ \ \alpha \mapsto (\alpha x,0)\]
is a morphism such that $g(k^\times) \subseteq \overline{\cO_2(\fu)}$, we conclude that $(0,0) \in \overline{\cO_2(\fu)}$. As a result, $\cC_2(\fu)\!=\!\overline{\cO_2(\fu)}$. \end{proof}

\bigskip

\begin{Lemma} \label{Var2} Suppose that $\Char(k)$ is good for $G$ and that $\ssrk(G)\!\ge\!2$. Let $\cO \subseteq \fu\!\smallsetminus\!\{0\}$ be a $B$-orbit, $x \in \cO$.
\begin{enumerate}
\item We have $\varphi(B\dact(\{x\}\!\times\!C_\fu(x))\cap \cO_2(\fu))\!=\!\{\fa \in \A(2,\fu) \ ; \ \fa\cap \cO\ne \emptyset\}$.
\item If $\cO\!=\!\cO_{\rm reg}\cap\fu$, then $\overline{\varphi(B\dact(\{x\}\!\times\!C_\fu(x))\cap \cO_2(\fu))}$ is an irreducible component of $\A(2,\fu)$ of dimension $\dim B\!-\!\dim Z(G)\!-\!4$.
\end{enumerate}
\end{Lemma}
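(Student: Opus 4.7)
The plan is to deduce (1) directly from the definitions, and then for (2) to identify the set $Y := B\dact(\{x\}\!\times\!C_\fu(x))\cap\cO_2(\fu)$ as an open dense subset of the irreducible component $\fC(x)\subseteq\cC_2(\fu)$ furnished by Lemma \ref{Df3}, compute $\dim\overline{\varphi(Y)}$ via a fibre dimension argument, and finally establish maximality of $\overline{\varphi(Y)}$ by showing that $\varphi(Y)$ is itself open in $\A(2,\fu)$.

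For (1), the forward inclusion is immediate, since any $(b\dact x,b\dact z)\in B\dact(\{x\}\!\times\!C_\fu(x))\cap\cO_2(\fu)$ has first component $b\dact x\in\cO$, so its $\varphi$-image meets $\cO$. For the reverse, given $\fa\in\A(2,\fu)$ with $\fa\cap\cO\ne\emptyset$, I would choose $x'=b\dact x\in\fa\cap\cO$, extend it to a basis $(x',y')$ of $\fa$, and use the abelianness of $\fa$ to see that $b^{-1}\dact y'\in C_\fu(x)$, so that $(x',y')=b\dact(x,b^{-1}\dact y')$ belongs to the required $B$-saturation.

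Turning to (2), the identification in (1) gives $Y=\pr_1^{-1}(\cO)\cap\cO_2(\fu)$. Since $\cO=\cO_{\rm reg}\cap\fu$ is open in $\fu$ (as used in the proof of Lemma \ref{Df3}), $\pr_1^{-1}(\cO)$ is open in $\cC_2(\fu)$, and combining with Lemma \ref{Var1}(2) makes $Y$ an open subset of $\cC_2(\fu)$. Lemma \ref{Df3} then identifies $\overline{Y}=\fC(x)$ as an irreducible component of $\cC_2(\fu)$ of dimension $\dim B\!-\!\dim Z(G)$. To compute $\dim\overline{\varphi(Y)}$, note that the fibres of $\varphi:\cO_2(\fu)\lra\A(2,\fu)$ are the $\GL_2(k)$-orbits of the simple action, each irreducible of dimension $4$; for any $\fa\in\varphi(Y)$ the subset $\fa\cap\cO$ is open and nonempty in the irreducible affine space $\fa$, hence dense, so $\varphi^{-1}(\fa)\cap Y$ is open dense in $\varphi^{-1}(\fa)$ and also of dimension $4$. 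The fibre dimension theorem then yields $\dim\overline{\varphi(Y)}=\dim Y\!-\!4=\dim B\!-\!\dim Z(G)\!-\!4$, and $\overline{\varphi(Y)}$ is irreducible as the closure of a morphic image of the irreducible $Y$.

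It remains to verify that $\overline{\varphi(Y)}$ is a component of $\A(2,\fu)$. Using (1) once more,
\[ \varphi(Y) = \A(2,\fu)\!\smallsetminus\!\{\fa\in\A(2,\fu)\ ;\ \fa\subseteq\fu\!\smallsetminus\!\cO\}, \]
and since the condition $\fa\subseteq V$ for a fixed closed $V\subseteq\fu$ cuts out a closed subvariety of $\Gr_2(\fu)$, the set $\varphi(Y)$ is open in $\A(2,\fu)$. Consequently, any irreducible closed $D\subseteq\A(2,\fu)$ containing $\overline{\varphi(Y)}$ contains $\varphi(Y)$ as a nonempty open subset, which is therefore dense in $D$, so $D=\overline{\varphi(Y)}$. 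The main technical point will be the fibre dimension step, which hinges on the density of $\fa\cap\cO$ inside $\fa$ for each $\fa\in\varphi(Y)$; this in turn rests on the openness of $\cO$ in $\fu$.
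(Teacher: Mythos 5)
Your proof is correct and follows essentially the same route as the paper: part (1) is the same bookkeeping with the $B$-action, and part (2) likewise rests on the openness of $\{\fa\in\A(2,\fu)\ ;\ \fa\cap\cO\ne\emptyset\}$, the irreducibility of $Y$, the $4$-dimensional fibres of $\varphi$, and Lemma \ref{Df1}/\ref{Df3} for $\dim\overline{Y}=\dim B\!-\!\dim Z(G)$. The only cosmetic difference is that you prove the openness directly from the closedness of $\{\fa\in\Gr_2(\fu)\ ;\ \fa\subseteq\fu\!\smallsetminus\!\cO\}$, where the paper invokes \cite[(3.2)]{CF}, and you make the fibre-dimension step explicit where the paper simply subtracts $4$.
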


\begin{proof} (1) We put $\A(2,\fu)_\cO := \{\fa \in \A(2,\fu) \ ; \ \fa\cap \cO\ne \emptyset\}$. Let $y \in C_\fu(x)$ be such that $(x,y) \in \cO_2(\fu)$.Then $x \in \varphi(x,y)\cap \cO$, 
so that $\varphi(x,y) \in \A(2,\fu)_\cO$. Since $\A(2,\fu)_\cO$ is $B$-stable, it follows that $\varphi(B\dact(\{x\}\!\times\!C_\fu(x))\cap \cO_2(\fu)) = B\dact \varphi((\{x\}\!\times\!C_\fu(x))\cap \cO_2(\fu)) 
\subseteq \A(2,\fu)_\cO$.

Now suppose that $\fa \in \A(2,\fu)_\cO$, and write $\fa = ky\!\oplus\!kz$, where $y \in \cO$. Then there is $b \in B$ such that $x\!=\!b\dact y$, so that $b\dact\fa \in \varphi((\{x\}\!\times\!C_\fu(x))\cap
\cO_2(\fu))$. As a result, $\fa \in \varphi(B\dact(\{x\}\!\times\!C_\fu(x))\cap \cO_2(\fu))$.

(2) General theory tells us that $\cO\!=\!\cO_{\rm reg}\cap \fu$ is an open $B$-orbit of $\fu$. Note that $\cO_{\rm reg}$ is a conical subset of $\fg$, so that $\cO_{\rm reg}\cap\fu$ is a conical subset of
$\fu$. It now follows from (1) and \cite[(3.2)]{CF} that $\varphi(B\dact(\{x\}\!\times\!C_\fu(x))\cap \cO_2(\fu))$ is an open subset of $\A(2,\fu)$. In view of Lemma \ref{Var1}, the irreducible set 
$\{x\}\!\times\!C_\fu(x)$ meets $\cO_2(\fu)$, so that $B\dact((\{x\}\!\times\!C_\fu(x))\cap \cO_2(\fu))=B\dact(\{x\}\!\times\!C_\fu(x))\cap \cO_2(\fu)$ is irreducible. Hence
$\varphi(B\dact(\{x\}\!\times\!C_\fu(x))\cap \cO_2(\fu))$ is a non-empty irreducible, open subset of $\A(2,\fu)$. Let $C\supseteq \varphi(B\dact(\{x\}\!\times\!C_\fu(x))\cap \cO_2(\fu))$ be an irreducible
component of $\A(2,\fu)$. Then $\varphi(B\dact(\{x\}\!\times\!C_\fu(x))\cap \cO_2(\fu))$ lies dense in $C$, so that $C=\overline{\varphi(B\dact(\{x\}\!\times\!C_\fu(x))\cap\cO_2(\fu))}$. Observing Lemma 
\ref{Df1}, we thus obtain 
\[ \dim C = \dim B\dact(\{x\}\!\times\!C_\fu(x))\cap\cO_2(\fu)\!-\!4 = \dim B\dact(\{x\}\!\times\!C_\fu(x))\!-\!4=\dim B \!-\!\dim Z(G)\!-\!4,\] 
as desired. \end{proof}

\bigskip
\noindent
Given $x \in \fu$, we put
\[ \A(2,\fu,x):=\{ \fa \in \A(2,\fu) \ ; \ x \in \fa\}.\]

\bigskip

\begin{Proposition} \label{Var3} Suppose that $\Char(k)$ is good for $G$ and that $\ssrk(G)\!\ge\!2$. 
\begin{enumerate}
\item  $\dim \A(2,\fu) \!=\! \dim B\!-\!\dim Z(G)\!+\!\modd(B;\fu)\!-\!4$.
\item The variety $\A(2,\fu)$ is equidimensional if and only if every almost simple component of $(G,G)$ is of type $(A_n)_{n\le 4}$ or $B_2$. In that case, every irreducible component 
$C \in \Irr(\A(2,\fu))$ is of the form $C=\overline{B\dact\A(2,\fu,x)}$ for some $B$-distinguished element $x \in \fu$.
\item The  variety $\A(2,\fu)$ is irreducible if and only if every almost simple component of $(G,G)$ is of type $A_1$ or $A_2$. \end{enumerate} \end{Proposition}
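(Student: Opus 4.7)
The plan is to transfer the dimension, equidimensionality, and irreducibility statements already established for $\cC_2(\fu)$ in Theorem \ref{Df2} and Corollary \ref{Irr1} to $\A(2,\fu)$ via the surjective morphism
\[ \varphi:\cO_2(\fu)\lra\A(2,\fu),\]
whose fibres are the free $\GL_2(k)$-orbits of dimension $4$, and whose domain is open and dense in $\cC_2(\fu)$ by Lemma \ref{Var1}(2). Since $\GL_2(k)$ is connected, $\varphi$ is a geometric quotient by a connected group, and hence induces a bijection between $\Irr(\cO_2(\fu))$ and $\Irr(\A(2,\fu))$ compatible with $K\!\mapsto\!\overline{\varphi^{-1}(K)}$ and $C\!\mapsto\!\overline{\varphi(C)}$.

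For (1), Theorem \ref{Df2} yields $\dim\cO_2(\fu)\!=\!\dim\cC_2(\fu)\!=\!\dim B\!-\!\dim Z(G)\!+\!\modd(B;\fu)$, and the fibre dimension theorem applied to $\varphi$, with all fibres of dimension $4$, gives the claimed formula.

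For (2), suppose first that $\modd(B;\fu)\!=\!0$. By Corollary \ref{Irr1}(1) the variety $\cC_2(\fu)$ is equidimensional of dimension $\dim B\!-\!\dim Z(G)$, and Lemma \ref{fmod2} combined with Lemma \ref{Df1}(2) shows that every component has the form $\fC(x)$ with $x$ distinguished for $B$. The Remarks following Lemma \ref{Df3} give $\max\NN_0(\fu)\!=\!\dim_k\fu\!-\!\ssrk(G)$, so $\dim C_\fu(x)\!\ge\!\ssrk(G)\!\ge\!2$, and $\fC(x)\cap\cO_2(\fu)$ is a non-empty dense open subset of $\fC(x)$. Passing through the bijection above and invoking Lemma \ref{Var2}(1) identifies $\overline{\varphi(\fC(x)\cap\cO_2(\fu))}$ with $\overline{B\dact\A(2,\fu,x)}$, which then has dimension $\dim B\!-\!\dim Z(G)\!-\!4\!=\!\dim\A(2,\fu)$ and is therefore a component. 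Conversely, if $\A(2,\fu)$ is equidimensional, Lemma \ref{Var2}(2) exhibits a component of dimension $\dim B\!-\!\dim Z(G)\!-\!4$; combined with (1) this forces $\modd(B;\fu)\!=\!0$, and Proposition \ref{fmod1} then provides the classification of types.

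For (3), density of $\cO_2(\fu)$ in $\cC_2(\fu)$ transports irreducibility in either direction, while the bijection $\Irr(\cO_2(\fu))\leftrightarrow\Irr(\A(2,\fu))$ above transports it between $\cO_2(\fu)$ and $\A(2,\fu)$, so $\A(2,\fu)$ is irreducible if and only if $\cC_2(\fu)$ is; Corollary \ref{Irr1}(2) then finishes the equivalence. The main delicate point I expect to handle is in part (2): making rigorous the identification of the component $\overline{B\dact\A(2,\fu,x)}$ with the image of $\fC(x)$ under $\varphi$, together with the bookkeeping that distinct components of $\cC_2(\fu)$ yield distinct components of $\A(2,\fu)$; all of this is straightforward once the geometric quotient picture for $\varphi$ is set up carefully.
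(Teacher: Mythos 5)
Your proposal is correct and follows essentially the same route as the paper: everything is transferred from $\cC_2(\fu)$ to $\A(2,\fu)$ through the quotient map $\varphi:\cO_2(\fu)\lra\A(2,\fu)$ with four-dimensional $\GL_2(k)$-orbit fibres, the converse of (2) rests on Lemma \ref{Var2}(2) together with (1), the component description comes from Lemma \ref{fmod2} and Lemma \ref{Df1}(2), and (3) reduces to Corollary \ref{Irr1}. The one point you assert rather than prove -- that the geometric-quotient structure of $\varphi$ induces a bijection $\Irr(\cO_2(\fu))\leftrightarrow\Irr(\A(2,\fu))$ (in particular that preimages of irreducible closed sets are irreducible, so that distinct components upstairs cannot have nested images) -- is exactly what the paper outsources to \cite[(2.5.1),(2.5.2)]{CF}, so it should be cited or argued explicitly rather than taken for granted.
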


\begin{proof} (1) We write
\[ \cC_2(\fu) = \bigcup_{C \in \Irr(\cC_2(\fu))}C\]
as the union of its irreducible components. Since $\ssrk(G)\!\ge\!2$, Lemma \ref{Var1} shows that $\cO_2(\fu)$ is a dense open subset of $\cC_2(\fu)$. As a result, every irreducible component 
$C \in \Irr(\cC_2(\fu))$ meets $\cO_2(\fu)$. In view of Theorem \ref{Df2}, we obtain 
\[ \dim \cO_2(\fu) = \dim \cC_2(\fu) = \dim B\!-\!\dim Z(G)\!+\!\modd(B;\fu).\] 
Let $C\in \Irr(\cC_2(\fu))$.  Then $C\cap\cO_2(\fu)$ is a $\GL_2(k)$-stable, irreducible variety of dimension $\dim C$, so that 
\[\dim \overline{\varphi(C\cap\cO_2(\fu))} = \dim C\cap\cO_2(\fu)\!-\!4 = \dim C\!-\!4.\]
Consequently,
\[ \dim \A(2,\fu) = \max_{C \in \Irr(\cC_2(\fu))} \overline{\varphi(C\cap\cO_2(\fu))}= \dim \cC_2(\fu)\!-\!4 = \dim B\!-\!\dim Z(G)\!+\!\modd(B;\fu)\!-\!4.\]
(2) Suppose that $\A(2,\fu)$ is equidimensional. As Lemma \ref{Var2} provides $C \in \Irr(\A(2,\fu))$ such that $\dim C\!=\!\dim B\!-\!\dim Z(G)\!-\!4$, it follows from (1) that $\modd(B;\fu)\!=\!0$. The 
Theorem of Hille-R\"ohrle (see Proposition \ref{fmod1}) ensures that every almost simple component of $(G,G)$ is of the asserted type. Assuming this to be the case, Corollary \ref{Irr1} implies that 
$\cC_2(\fu)$ is equidimensional. In view of \cite[(2.5.1)]{CF}, $\cO_2(\fu)$ is equidimensional as well. We may thus apply \cite[(2.5.2)]{CF} to the canonical surjection $\cO_2(\fu) \twoheadrightarrow
\A(2,\fu)$ and the $\GL_2(k)$-action on $\cO_2(\fu)$ to conclude that $\A(2,\fu)$ is equidimensional. 

Given $C \in \Irr(\cC_2(\fu))$, Lemma \ref{fmod2} provides $x_C \in \fu$ such that $C\! =\! \fC(x_C)$. In view of Lemma \ref{Df1}, our current assumption shows that $x_C$ is distinguished for $B$.
According to Lemma \ref{Var1}, we have $(\{x_C\}\!\times\!C_\fu(x_C))\cap\cO_2(\fu) \ne \emptyset$, while Lemma \ref{Var2} yields $\varphi(B\dact(\{x_C\}\!\times\!C_\fu(x_C))\cap\cO_2(\fu))=
B\dact\A(2,\fu,x_C)$. 

Let $a \in \fC(x_C)\cap\cO_2(\fu)$. If $\cU \subseteq \cC_2(\fu)$ is an open subset containing $a$, then $\cU\cap (B\dact(\{x_C\}\!\times\!C_\fu(x_C))$ is a non-empty open subset of the irreducible set
$B\dact(\{x_C\}\!\times\!C_\fu(x_C))$. Since this also holds for $B\dact (\{x_C\}\!\times\!C_\fu(x_C))\cap\cO_2(\fu)$, we conclude that $\cU\cap B\dact (\{x_C\}\!\times\!C_\fu(x_C))\cap \cO_2(\fu) \ne 
\emptyset$. This shows that $a \in \overline{B\dact (\{x_C\}\!\times\!C_\fu(x_C))\cap\cO_2(\fu)}$. Consequently,
\begin{eqnarray*} 
\A(2,\fu) & = &\bigcup_{C \in \Irr(\cC_2(\fu))} \varphi(\fC(x_C)\cap\cO_2(\fu)) \subseteq \bigcup_{C \in \Irr(\cC_2(\fu))} \varphi(\overline{B\dact (\{x_C\}\!\times\!C_\fu(x_C))\cap\cO_2(\fu)}) \\
                & \subseteq & \bigcup_{C \in \Irr(\cC_2(\fu))} \overline{\varphi(B\dact (\{x_C\}\!\times\!C_\fu(x_C))\cap\cO_2(\fu))}  \subseteq \bigcup_{C \in \Irr(\cC_2(\fu))} \overline{\varphi(B\dact 
                [(\{x_C\}\!\times\!C_\fu(x_C))\cap\cO_2(\fu)])} \\ 
                & = & \bigcup_{C \in \Irr(\cC_2(\fu))} \overline{B\dact \varphi((\{x_C\}\!\times\!C_\fu(x_C))\cap\cO_2(\fu))} = \bigcup_{C \in \Irr(\cC_2(\fu))} \overline{B\dact \A(2,\fu,x_C)} \subseteq \A(2,\fu),
\end{eqnarray*}
so that $\A(2,\fu)=\bigcup_{C \in \Irr(\cC_2(\fu))} \overline{B\dact \A(2,\fu,x_C)}$ is a finite union of closed irreducible subsets. It follows that every irreducible component of $\A(2,\fu)$ is of the form 
$\overline{B\dact \A(2,\fu,x_C)}$ for some $C \in \Irr(\cC_2(\fu))$. 

(3) Suppose that $\A(2,\fu)$ is irreducible. Then (2), Proposition \ref{fmod1}, and Corollary \ref{Irr1} show that the variety $\cC_2(\fu)$ is equidimensional. Using \cite[(2.5.2)]{CF}, we conclude that 
$\cC_2(\fu)$ is irreducible and Corollary \ref{Irr1} implies that $G$ has the asserted type. The reverse implication is a direct consequence of Corollary \ref{Irr1}.  \end{proof}

\bigskip

\begin{Remark} The arguments of (2) can actually be used to show that $\cC_2(\fu)$ and $\A(2,\fu)$ have the same number of components in case one (and hence both) of these spaces is (are) 
equidimensional: Let $C \in \Irr(\cC_2(\fu))$. Returning to the proof of Proposition \ref{Pre3}(3), we find a subset $X_C \subseteq \fu$ such that
\[ C= \overline{\pr_1^{-1}(X_C)}.\]
Since $C$ is $\GL_2(k)$-stable, we conclude that $X_C\not \subseteq\{0\}$. Let $x \in X_C\!\smallsetminus\!\{0\}$. Then $\{x\}\!\times\!C_\fu(x) \subseteq C$. The assumption $C_\fu(x)\!=\!kx$
implies $x \in C(\fu)$ and hence $\dim_k\fu\!=\!1$, a contradiction. As a result, $C\cap\cO_2(\fu)\!\ne\!\emptyset$. In view of \cite[(2.5.1)]{CF}, the variety $\cO_2(\fu)$ is therefore equidimensional with 
$|\Irr(\cO_2(\fu))|\!=\!|\Irr(\cC_2(\fu))|$. By virtue of \cite[(2.5.2)]{CF}, we obtain $|\Irr(\cO_2(\fu))|\!=\!|\Irr(\A(2,\fu))|$. \end{Remark}

\bigskip

\bigskip

\end{document}